\definecolor{vry}{RGB}{253, 231, 37}
\definecolor{vrg}{RGB}{94,201,98}
\definecolor{vrdg}{RGB}{33, 145, 140}
\definecolor{vrb}{RGB}{59,82,139}
\definecolor{vrp}{RGB}{68,1,84}
\definecolor{vro}{RGB}{249,142,9}
\definecolor{vrr}{RGB}{188,55,84}
\definecolor{vrnb}{RGB}{13,8,135}
\newcommand {\red}[1]{{\color{vrr}{#1}}}
\numberwithin{equation}{section}
\def\@noindentfalse{\global\let\if@noindent\iffalse}
\def\@noindenttrue {\global\let\if@noindent\iftrue}
\def\@aftertheorem{%
\@noindenttrue
\everypar{%
\if@noindent%
\@noindentfalse\clubpenalty\@M\setbox\z@\lastbox%
\else%
\clubpenalty \@clubpenalty\everypar{}%
\fi}}
\theoremstyle{plain}
\newtheorem{theorem}{Theorem}[section]
\newtheorem{lemma}[theorem]{Lemma}
\newtheorem{proposition}[theorem]{Proposition}
\theoremstyle{definition}
\newtheorem{remark}[theorem]{Remark}
\bf\mathversion{bold}}{\thesubsection\kern1em}{0pt}{}
\bf\mathversion{bold}}{}{0pt}{}
\def\beqn#1{\begin{equation}#1\end{equation}}
\def\be#1{\begin{equation*}#1\end{equation*}}
\def\ben#1{\begin{equation}#1\end{equation}}
\def\bes#1{\begin{equation*}\begin{split}#1\end{split}\end{equation*}}
\def\besn#1{\begin{equation}\begin{split}#1\end{split}\end{equation}}
\def\bs#1{\begin{split}#1\end{split}}
\def\ba#1{\begin{align*}#1\end{align*}}
\def\ban#1{\begin{align}#1\end{align}}
\def\bg#1{\begin{gather*}#1\end{gather*}}
\def\bgn#1{\begin{gather}#1\end{gather}}
\def\bmn#1{\begin{multline}#1\end{multline}}
\let\@@todo\getsdo
\def\getsdo#1{\@@todo[color=red,backgroundcolor=red!10,size=\tiny]{#1}}
\def\given{\mskip 0.5mu plus 0.25mu\vert\mskip 0.5mu plus 0.15mu}
\newcounter{bracketlevel}%
\def\@bracketfactory#1#2#3#4#5#6{%
\expandafter\def\csname#1\endcsname##1{%
\global\advance\c@bracketlevel 1\relax%
\global\expandafter\let\csname @middummy\alph{bracketlevel}\endcsname\given%
\global\def\given{\mskip#5\csname#4\endcsname\vert\mskip#6}\csname#4l\endcsname#2##1\csname#4r\endcsname#3%
\global\expandafter\let\expandafter\given\csname @middummy\alph{bracketlevel}\endcsname%
\global\advance\c@bracketlevel -1\relax%
}%
}
\def\bracketfactory#1#2#3{%
\@bracketfactory{#1}{#2}{#3}{relax}{0.5mu plus 0.25mu}{0.5mu plus 0.15mu}
\@bracketfactory{b#1}{#2}{#3}{big}{1mu plus 0.25mu minus 0.25mu}{0.6mu plus 0.15mu minus 0.15mu}
\@bracketfactory{bb#1}{#2}{#3}{Big}{2.4mu plus 0.8mu minus 0.8mu}{1.8mu plus 0.6mu minus 0.6mu}
\@bracketfactory{bbb#1}{#2}{#3}{bigg}{3.2mu plus 1mu minus 1mu}{2.4mu plus 0.75mu minus 0.75mu}
\@bracketfactory{bbbb#1}{#2}{#3}{Bigg}{4mu plus 1mu minus 1mu}{3mu plus 0.75mu minus 0.75mu}
}
\let\original@left\left
\let\original@right\right
\renewcommand{\left}{\mathopen{}\mathclose\bgroup\original@left}
\renewcommand{\right}{\aftergroup\egroup\original@right}
\newcounter{ctr}\loop\stepcounter{ctr}\edef\X{\@Alph\c@ctr}%
\edef\csname s\X\endcsname{\noexpand\mathscr{\X}}
\edef\csname c\X\endcsname{\noexpand\mathcal{\X}}
\edef\csname b\X\endcsname{\noexpand\boldsymbol{\X}}
\edef\csname I\X\endcsname{\noexpand\mathbb{\X}}
\def\^#1{\relax\ifmmode {\mathaccent"705E #1} \else {\accent94 #1}\fi}
\def\~#1{\relax\ifmmode {\mathaccent"707E #1} \else {\accent"7E #1}\fi}
\edef\-#1{\relax\noexpand\ifmmode {\noexpand\bar{#1}} \noexpand\else \-#1\noexpand\fi}
\def\sump{\sideset{}{'}\sum}
\def\tsump{\sum'}
\renewcommand{\leq}{\leqslant}
\renewcommand{\geq}{\geqslant}
\renewcommand{\phi}{\varphi}
\newcommand{\eps}{\varepsilon}
\newcommand{\D}{\Delta}
\newcommand{\I}{\mathop{{}\mathrm{I}}\mathopen{}}
\newcommand{\dtv}{\mathop{d_{\mathrm{TV}}}\mathopen{}}
\newcommand{\hence}{\mathrel{\Rightarrow}}
\newcommand{\longto}{\longrightarrow}
\newcommand{\R}{\mathbb{R}}
\newcommand{\N}{\mathbb{N}}
\def\bt{\boldsymbol{t}}
\def\bu{\boldsymbol{u}}
\let\@IE\IE\let\IE\undefined
\newcommand{\IE}{\mathop{{}\@IE}\mathopen{}}
\let\@IP\IP\let\IP\undefined
\newcommand{\IP}{\mathop{{}\@IP}\mathopen{}}
\newcommand{\PP}{\IP}
\newcommand{\EE}{\IE}
\newcommand{\dd}[1]{\mathop{{}\mathrm{d}#1}}
\newcommand{\ee}{\mathop{{}\mathrm{e}}\nolimits}
\newcommand{\Var}{\mathop{\mathrm{Var}}}
\newcommand{\Cov}{\mathop{\mathrm{Cov}}}
\newcommand{\Be}{\mathop{\mathrm{Bernoulli}}}
\newcommand{\Bi}{\mathop{\mathrm{Binomial}}}
\newcommand{\Po}{\mathop{\mathrm{Poisson}}}
\newcommand{\bigo}{\mathop{{}\mathrm{O}}\mathopen{}}
\newcommand{\law}{\mathop{{}\sL}\mathopen{}}
\def\mel{\MoveEqLeft}
\DeclareMathOperator{\inj}{inj}
\newcommand{\noverlap}{\nu}
\newcommand{\sub}{\mathrm{sub}}
\newcommand{\bbs}{\boldsymbol{s}}
\newcommand{\dm}{\mathop{d_{\widetilde{\mathrm{m}}}}}
\newcommand{\dMZ}{\mathop{d_{\mathrm{m}}}}
\newcommand{\dsub}{\mathop{d_{\mathrm{sub}}}}
\newcommand{\merge}[1]{\mathop{\otimes}_{#1}}
\renewcommand{\cite}[2][]{\citet[#1]{#2}}
\newsavebox{\bvDisp}
\newsavebox{\bvText}
\newsavebox{\bvScript}
\newsavebox{\bvScriptScript}
\savebox{\bvDisp}{%
  \begin{tikzpicture}[scale=.20]
    \node at (0,0) [draw,circle,fill=black,inner sep=1.2pt] {};
  \end{tikzpicture}%
}
\savebox{\bvText}{%
  \begin{tikzpicture}[scale=.20]
    \node at (0,0) [draw,circle,fill=black,inner sep=1.2pt] {};
  \end{tikzpicture}%
}
\savebox{\bvScript}{%
  \begin{tikzpicture}[scale=.16]
    \node at (0,0) [draw,circle,fill=black,inner sep=1.0pt] {};
  \end{tikzpicture}%
}
\savebox{\bvScriptScript}{%
  \begin{tikzpicture}[scale=.13]
    \node at (0,0) [draw,circle,fill=black,inner sep=0.8pt] {};
  \end{tikzpicture}%
}
\newcommand{\blackvertex}{%
  \mathchoice
    {\raisebox{0.1ex}{\usebox{\bvDisp}}}
    {\raisebox{0.1ex}{\usebox{\bvText}}}
    {\raisebox{0.08ex}{\usebox{\bvScript}}}
    {\raisebox{0.07ex}{\usebox{\bvScriptScript}}}
}
\newcommand{\bv}{\blackvertex}
\newsavebox{\wvDisp}
\newsavebox{\wvText}
\newsavebox{\wvScript}
\newsavebox{\wvScriptScript}
\savebox{\wvDisp}{%
  \begin{tikzpicture}[scale=.20]
    \node at (0,0) [draw,circle,fill=white,inner sep=1.2pt] {};
  \end{tikzpicture}%
}
\savebox{\wvText}{%
  \begin{tikzpicture}[scale=.20]
    \node at (0,0) [draw,circle,fill=white,inner sep=1.2pt] {};
  \end{tikzpicture}%
}
\savebox{\wvScript}{%
  \begin{tikzpicture}[scale=.13]
    \node at (0,0) [draw,circle,fill=white,inner sep=1.0pt] {};
  \end{tikzpicture}%
}
\savebox{\wvScriptScript}{%
  \begin{tikzpicture}[scale=.10]
    \node at (0,0) [draw,circle,fill=white,inner sep=0.8pt] {};
  \end{tikzpicture}%
}
\newcommand{\whitevertex}{%
  \mathchoice
    {\raisebox{0.1ex}{\usebox{\wvDisp}}}
    {\raisebox{0.1ex}{\usebox{\wvText}}}
    {\raisebox{0.08ex}{\usebox{\wvScript}}}
    {\raisebox{0.07ex}{\usebox{\wvScriptScript}}}
}
\newcommand{\wv}{\whitevertex}
\newsavebox{\bbeDisp}
\newsavebox{\bbeText}
\newsavebox{\bbeScript}
\newsavebox{\bbeScriptScript}
\savebox{\bbeDisp}{%
  \begin{tikzpicture}[scale=0.20]
    \node at (0,0) [draw,circle,fill=black,inner sep=1.2pt] (A) {};
    \node at (0.8,1) [draw,circle,fill=black,inner sep=1.2pt] (B) {};
    \draw (A) -- (B);
  \end{tikzpicture}%
}
\savebox{\bbeText}{%
  \begin{tikzpicture}[scale=0.20]
    \node at (0,0) [draw,circle,fill=black,inner sep=1.2pt] (A) {};
    \node at (0.8,1) [draw,circle,fill=black,inner sep=1.2pt] (B) {};
    \draw (A) -- (B);
  \end{tikzpicture}%
}
\savebox{\bbeScript}{%
  \begin{tikzpicture}[scale=0.13]
    \node at (0,0) [draw,circle,fill=black,inner sep=0.8pt] (A) {};
    \node at (0.8,1) [draw,circle,fill=black,inner sep=0.8pt] (B) {};
    \draw (A) -- (B);
  \end{tikzpicture}%
}
\savebox{\bbeScriptScript}{%
  \begin{tikzpicture}[scale=0.10]
    \node at (0,0) [draw,circle,fill=black,inner sep=0.6pt] (A) {};
    \node at (0.8,1) [draw,circle,fill=black,inner sep=0.6pt] (B) {};
    \draw (A) -- (B);
  \end{tikzpicture}%
}
\newsavebox{\bweDisp}
\newsavebox{\bweText}
\newsavebox{\bweScript}
\newsavebox{\bweScriptScript}
\savebox{\bweDisp}{%
  \begin{tikzpicture}[scale=0.20]
    \node at (0,0) [draw,circle,fill=black,inner sep=1.1pt] (A) {};
    \node at (0.8,1) [draw,circle,fill=white,inner sep=1.1pt] (B) {};
    \draw (A) -- (B);
  \end{tikzpicture}%
}
\savebox{\bweText}{%
  \begin{tikzpicture}[scale=0.20]
    \node at (0,0) [draw,circle,fill=black,inner sep=1.1pt] (A) {};
    \node at (0.8,1) [draw,circle,fill=white,inner sep=1.1pt] (B) {};
    \draw (A) -- (B);
  \end{tikzpicture}%
}
\savebox{\bweScript}{%
  \begin{tikzpicture}[scale=0.13]
    \node at (0,0) [draw,circle,fill=black,inner sep=0.7pt] (A) {};
    \node at (0.8,1) [draw,circle,fill=white,inner sep=0.7pt] (B) {};
    \draw (A) -- (B);
  \end{tikzpicture}%
}
\savebox{\bweScriptScript}{%
  \begin{tikzpicture}[scale=0.10]
    \node at (0,0) [draw,circle,fill=black,inner sep=0.6pt] (A) {};
    \node at (0.8,1) [draw,circle,fill=white,inner sep=0.6pt] (B) {};
    \draw (A) -- (B);
  \end{tikzpicture}%
}
\newcommand{\blackwhiteedge}{%
  \mathchoice
    {\raisebox{-0.4ex}{\usebox{\bweDisp}}}
    {\raisebox{-0.4ex}{\usebox{\bweText}}}
    {\raisebox{-0.3ex}{\usebox{\bweScript}}}
    {\raisebox{-0.2ex}{\usebox{\bweScriptScript}}}
}
\newsavebox{\wbeDisp}
\newsavebox{\wbeText}
\newsavebox{\wbeScript}
\newsavebox{\wbeScriptScript}
\savebox{\wbeDisp}{%
  \begin{tikzpicture}[scale=0.20]
    \node at (0,0) [draw,circle,fill=white,inner sep=1.1pt] (A) {};
    \node at (0.8,1) [draw,circle,fill=black,inner sep=1.1pt] (B) {};
    \draw (A) -- (B);
  \end{tikzpicture}%
}
\savebox{\wbeText}{%
  \begin{tikzpicture}[scale=0.20]
    \node at (0,0) [draw,circle,fill=white,inner sep=1.1pt] (A) {};
    \node at (0.8,1) [draw,circle,fill=black,inner sep=1.1pt] (B) {};
    \draw (A) -- (B);
  \end{tikzpicture}%
}
\savebox{\wbeScript}{%
  \begin{tikzpicture}[scale=0.13]
    \node at (0,0) [draw,circle,fill=white,inner sep=0.7pt] (A) {};
    \node at (0.8,1) [draw,circle,fill=black,inner sep=0.7pt] (B) {};
    \draw (A) -- (B);
  \end{tikzpicture}%
}
\savebox{\wbeScriptScript}{%
  \begin{tikzpicture}[scale=0.10]
    \node at (0,0) [draw,circle,fill=white,inner sep=0.6pt] (A) {};
    \node at (0.8,1) [draw,circle,fill=black,inner sep=0.6pt] (B) {};
    \draw (A) -- (B);
  \end{tikzpicture}%
}
\newcommand{\whiteblackedge}{%
  \mathchoice
    {\raisebox{-0.4ex}{\usebox{\wbeDisp}}}
    {\raisebox{-0.4ex}{\usebox{\wbeText}}}
    {\raisebox{-0.3ex}{\usebox{\wbeScript}}}
    {\raisebox{-0.2ex}{\usebox{\wbeScriptScript}}}
}
\newsavebox{\wweDisp}
\newsavebox{\wweText}
\newsavebox{\wweScript}
\newsavebox{\wweScriptScript}
\savebox{\wweDisp}{%
  \begin{tikzpicture}[scale=0.20]
    \node at (0,0) [draw,circle,fill=white,inner sep=1.1pt] (A) {};
    \node at (0.8,1) [draw,circle,fill=white,inner sep=1.1pt] (B) {};
    \draw (A) -- (B);
  \end{tikzpicture}%
}
\savebox{\wweText}{%
  \begin{tikzpicture}[scale=0.20]
    \node at (0,0) [draw,circle,fill=white,inner sep=1.1pt] (A) {};
    \node at (0.8,1) [draw,circle,fill=white,inner sep=1.1pt] (B) {};
    \draw (A) -- (B);
  \end{tikzpicture}%
}
\savebox{\wweScript}{%
  \begin{tikzpicture}[scale=0.13]
    \node at (0,0) [draw,circle,fill=white,inner sep=0.7pt] (A) {};
    \node at (0.8,1) [draw,circle,fill=white,inner sep=0.7pt] (B) {};
    \draw (A) -- (B);
  \end{tikzpicture}%
}
\savebox{\wweScriptScript}{%
  \begin{tikzpicture}[scale=0.10]
    \node at (0,0) [draw,circle,fill=white,inner sep=0.6pt] (A) {};
    \node at (0.8,1) [draw,circle,fill=white,inner sep=0.6pt] (B) {};
    \draw (A) -- (B);
  \end{tikzpicture}%
}
\newsavebox{\edgeDisp}
\newsavebox{\edgeText}
\newsavebox{\edgeScript}
\newsavebox{\edgeScriptScript}
\savebox{\edgeDisp}{%
  \begin{tikzpicture}[scale=0.20]
    \node at (0,0) [draw,circle,fill=black,inner sep=.1pt] (A) {};
    \node at (0.8,1) [draw,circle,fill=black,inner sep=.1pt] (B) {};
    \draw (A) -- (B);
  \end{tikzpicture}%
}
\savebox{\edgeText}{%
  \begin{tikzpicture}[scale=0.20]
    \node at (0,0) [draw,circle,fill=black,inner sep=.1pt] (A) {};
    \node at (0.8,1) [draw,circle,fill=black,inner sep=.1pt] (B) {};
    \draw (A) -- (B);
  \end{tikzpicture}%
}
\savebox{\edgeScript}{%
  \begin{tikzpicture}[scale=0.13]
    \begin{scope}[rotate=-30]
      \draw (-0.2,-0.2) -- (0.2,0.2);
      \draw (-0.2,0.2)  -- (0.2,-0.2);
    \end{scope}
    \begin{scope}[shift={(0.8,1)},rotate=-30]
      \draw (-0.2,-0.2) -- (0.2,0.2);
      \draw (-0.2,0.2)  -- (0.2,-0.2);
    \end{scope}
    \node at (0,0)   [inner sep=.1pt] (A) {};
    \node at (0.8,1) [inner sep=.1pt] (B) {};
    \draw (A) -- (B); 
  \end{tikzpicture}%
}
\savebox{\edgeScriptScript}{%
  \begin{tikzpicture}[scale=0.10]
    \node at (0,0) [draw,circle,fill=black,inner sep=.1pt] (A) {};
    \node at (0.8,1) [draw,circle,fill=black,inner sep=.1pt] (B) {};
    \draw (A) -- (B);
  \end{tikzpicture}%
}
\def\red#1{\textcolor{vrr}{#1}}
\begin{document}


\title{\Large\bf Co-evolving Vertex and Edge Dynamics in~Dense~Graphs}

\author{Siva Athreya, \quad  Frank den Hollander, \quad Adrian R\"ollin}
 
\newcommand{\Addresses}{{
\vfill
\footnotesize
\bigskip
\noindent Siva Athreya, International Centre for Theoretical Sciences, Survey No.\ 151, Shivakote,  
He\-sa\-ra\-ghat\-ta Hobli, Bengaluru 560089, India \& Indian Statistical Institute,  
8th Mile Mysore Road, Bengaluru 560059, India. 
\texttt{athreya@icts.res.in}

\medskip
\noindent Frank den Hollander, Mathematical Institute, Leiden University, Einsteinweg~55, 2333 CC Leiden, The Netherlands.
\texttt{denholla@math.leidenuniv.nl}

\medskip
\noindent Adrian R\"ollin, Department of Statistics and Data Science, National University of Singapore, 6~Science Drive~2, Singapore 117546. 
\texttt{adrian.roellin@nus.edu.sg}

\medskip
}}

\date{\today}

\maketitle


\begin{abstract} \noindent
We consider a random graph in which vertices can have one of two possible colours. Each vertex switches its colour at a rate that is proportional to the number of vertices of the other colour to which it is connected by an edge. Each edge turns on or off according to a rate that depends on whether the vertices at its two endpoints have the same colour or not. We prove that, in the limit as the graph size tends to infinity and the graph becomes dense, the graph process converges, in a suitable path topology, to a limiting Markov process that lives on a certain subset of the space of coloured graphons. In the limit, the density of each vertex colour evolves according to a Fisher-Wright diffusion driven by the density of the edges, while the underlying edge connectivity structure evolves according to a stochastic flow whose drift depends on the densities of the two vertex colours. 
\end{abstract}

\medskip\noindent
\emph{Keywords:} 
dense random graphs and graphons; vertex-edge dynamics; co-evolution; dynamic voter model; path topology of convergence in measure; Meyer-Zheng path topology; homogenisation; Fisher-Wright diffusion; stochastic flow.

\medskip\noindent
\emph{MSC 2020:}
05C21; 
05C80; 
60C05; 
60K35; 
60K37; 
82C44; 
92D25; 
91D30. 

\normalsize

  
\section{Introduction and main results}  
\label{sec1}

Networks form the backbone of our interconnected society, yet gaining a deep understanding of networks is scientifically challenging. There is a vast literature on static networks and processes evolving on them, such as the transmission of information, the spread of disease, the flow of traffic, or the distribution of energy. In contrast, the mathematical analysis of processes on \emph{dynamic} networks, which themselves evolve over time, remains in its infancy. While there has been notable progress in the physics and computer science literature, based on heuristic and approximative approaches, the current mathematical literature contains only a handful of scattered examples.

A pressing challenge lies in understanding systems with two simultaneous levels of dynamics: randomly evolving processes on randomly evolving networks locked in a feedback loop --- a setting termed \emph{co-evolution} that creates complex bi-directional interactions. Fundamental questions are: What constitutes dynamic equilibrium in such systems and do network dynamics accelerate or hinder equilibration? What are the classes of spatio-temporal behaviours that emerge? How can we design robust networks that leverage this interplay? Are there universal frameworks for estimating, optimising and controlling dynamics, both of and on networks? Addressing these challenges requires mathematical models that capture the co-evolution of network structure, yet remain tractable for rigorous analysis.

In this paper we propose a \emph{voter model} on a dynamically evolving dense coloured graph in which vertices can change colours (often referred to as \emph{opinions}) and edges can switch between being present or absent in a coupled way (Section~\ref{sec3}).  We provide and prove a complete mathematical description of a class of mutually coupled colour dynamics and graph dynamics in the large-size limit based on coloured graphons. We show emergence of a form of \emph{homogenisation} that manifests itself through a \emph{collapse} of the joint dynamics onto a subset of the space of coloured graphons, where the vertex colours homogenise while edge states evolve via stochastic flows governed by the overall colour densities (Theorems~\ref{thm1}--\ref{thm2}). We believe that our work represents an important advance towards the goals mentioned above, and is the first to rigorously characterise the large-size limit for a canonical model of co-evolving vertex and edge dynamics in dense graphs. The framework we put forward offers a mathematical foundation to analyse emergent phenomena arising from the interplay between local dynamics and global structure.

The remainder of this section is organised as follows: In Section~\ref{sec2} we provide a brief review of the literature, followed by the description of the model in Section~\ref{sec3} and the statement of the main results in Section~\ref{sec4}. We conclude this section  with a discussion of the significance of our main results, the novelty in our proof technique, and the organisation of the paper in  Section~\ref{sec5}.


\subsection{Background and earlier work}
\label{sec2}

The voter model, which dates back to the work of \cite{HL75}, is an interacting particle system describing opinion evolution in a simplified social network context, specifically, describing how individuals interact and form collective opinions. In static networks, \emph{consensus} is inevitable, with all individuals eventually adopting the same opinion. However, empirical studies of social media have uncovered evolution patterns where individuals split into communities holding distinct opinions that remain stable over long periods, a phenomenon known as \emph{polarisation}. Strong evidence indicates that the interplay between opinion dynamics and network dynamics is essential in facilitating polarisation. Recent years have seen an increasing number of papers attempting to describe co-evolutionary random networks via simulations and heuristic arguments. See, for example, \cite{HPZ11, LHAJS20, BVP24}, who study various aspects of polarisation, \cite{GZ06, KB08a} for results on how consensus is formed, and \cite{PTN06, RM20} and references therein on emergence of complex structural changes. 

Since the mathematical literature on co-evolution is \emph{scarce}, we provide some background and describe earlier work, so as to allow the reader to place our paper in the \emph{proper context}.
 

\paragraph{Earlier work for sparse graphs.}

\cite{DGLMSSSV12} analysed two discrete-time voter models on a dynamically evolving sparse random graph on $n$ vertices. At each time step, a uniformly chosen discordant edge (an edge connecting vertices with different opinions) is updated. With probability $\alpha \in (0,1)$, a randomly selected vertex of the chosen discordant edge adopts the colour of its neighbour. Otherwise, that is, with probability $1-\alpha$, the edge is rewired uniformly at random either to any vertex (variant 1) or to a vertex with the same colour (variant 2). The dynamics terminates once no discordant edges remain. The initial graph is an Erd\H{o}s-R\'enyi random graph with average degree $\lambda \in (1,\infty)$, and the initial densities of the colours are $u$ and $1-u$ with $u \in (0,1)$. It was argued, with the help of simulations and approximate computations, that a \emph{sharp threshold} $\alpha_c$ exists between consensus and polarisation. An approximate formula was derived for $\alpha_c$ in terms of $\lambda$ and $u$ based on a mean-field type approximation.  

\cite{HN06} considered a similar dynamics and conjectured a phase transition similar to the one found in \cite{DGLMSSSV12}, based on simulation. At each time unit choose a vertex uniformly at random. With probability $\alpha$ choose an edge that is connected to the selected vertex uniformly at random, and reconnect that edge to a vertex that shares the same colour as the selected vertex chosen uniformly at random. With probability $1-\alpha$ the selected vertex adopts the colour of a vertex to which it is connected by an edge chosen uniformly at random. Note that in this model the dynamics also ends when there are no edges between vertices with different colours. 

Sparse random graphs with colour dynamics were analysed with rigorous arguments by \cite{CCC16,ABHdHQ24,ACHQ23,ABHdHQ24pr,C24}. \cite{CCC16} showed that on a configuration random graph with prescribed degrees the fraction of vertices with one colour evolves according to a \emph{Fisher-Wright diffusion} on time scale~$n$ with a \emph{diffusion coefficient} that depends on the limiting degree distribution. \cite{ABHdHQ24} studied the same dynamics on the random regular graph, for which the diffusion coefficient can be computed, and it was shown that the fraction of discordant edges also follows a Fisher-Wright diffusion. \cite{ABHdHQ24pr} studied the same model when the edges are subjected to a \emph{rewiring dynamics}, and it was identified how the diffusion coefficient depends on the rate of rewiring. \cite{ACHQ23} and \cite{C24} analysed the same dynamics on the configuration random graph with directed edges, and the diffusion coefficient was computed under certain mild assumptions on the in-degrees and the out-degrees. 


\paragraph{Earlier work for dense graphs.}

Among the few rigorous results for dense graphs are those of \cite{BS17}, who analysed a version of the two discrete-time voter models on a dynamically evolving dense graph on $n$ vertices proposed by \cite{DGLMSSSV12}. The starting point is a dense Erd\H{o}s-R\'enyi random graph with positive densities of the two colours. At each time step, a uniformly chosen discordant edge (an edge connecting vertices with different opinions) is updated. With probability $\beta/n$, the opinion of a randomly selected vertex of the chosen discordant edge adopts the opinion of its neighbour. Otherwise, that is, with probability $1-\beta/n$, the edge is rewired uniformly at random either to any vertex (variant 1) or to a vertex with the same opinion (variant 2). The dynamics terminates once no discordant edges remain. For sufficiently small $\beta$, the dynamics ends in a time of order $n^2$ and polarisation occurs  (that is, the colours get fixated at non-trivial proportions in the infinite-time limit) at \emph{equal} fractions of the two opinions. For sufficiently large $\beta$, the dynamics ends in a time of order $n^3$ and polarisation occurs at \emph{different} fractions of the two opinions. As $\beta\to\infty$ the fraction of the minority opinion tends to zero, that is, there is asymptotic consensus (all the vertices hold the same opinion). 

\cite{EHS25} studied a voter model variant where discordant interactions lead to agreement (with probability $q$) or permanent edge removal. Both consensus (within components) and polarisation occur with positive probability, and \emph{bounds} were derived for these probabilities that depend on~$q$. \cite{BdHM22} considered graph-valued processes where vertex opinions fluctuate randomly and influence edge probabilities, which in turn influence the vertex opinions. A \emph{sample-path large deviation principle} and process convergence were derived for the empirical graphon limit, especially when vertex opinion fluctuations dominate edge state fluctuations. \cite{BdHM24pr} studied two-opinion voter models on dense dynamic random graphs favouring concordant edges, focusing on consensus versus polarisation. \emph{Functional laws of large numbers} for the opinion densities were derived, and \emph{equilibria} were characterised. \cite{BK25} considered two-opinion voter models on dense dynamic graphs where edges evolve based on endpoint opinions, while the opinion dynamics is unaffected by the graph state. A \emph{functional central limit theorem} was derived for the vector of subgraph counts.

\cite{bcw23} analysed heterogeneous diffusive interacting particle systems with mean-field interactions weighted by a dense graph. A \emph{law of large numbers} was shown as the graph sequence converges to a graphon, with the limit described by coupled stochastic differential equations. While applicable to the voter model, the graph structure was assumed to be \emph{static}.


\subsection{Model: Coupled dynamics of vertices and edges}
\label{sec3}

In this paper we consider an example of co-evolving dynamics in which vertices can switch between colour $0$ or $1$ (the opinions) and edges can switch between being present or absent, in a coupled way. We now define the state space and the generator of this Markov process.


\paragraph{Generator.}
Let $\sG_n$ be the set of all simple, vertex-coloured graphs on the vertex set $[n]=\{1,\dots,n\}$, each vertex having one of two possible colours (0 or 1). For any $G \in \sG_n$ and any vertex $u$, let $G^u$ be the graph obtained by \emph{flipping} the colour of $u$. For any pair of distinct vertices $u$ and $v$, let $G^{uv}$ be the graph obtained by switching the status of the edge between $u$ and $v$. We write $c_u(G)$ for the colour of vertex $u$, and $e_{uv}(G)$ for the status (0 or 1) of the edge between vertices $u$ and $v$, where $u \neq v$. For notational convenience, to shorten formulas we write
\be{
  \^c_u = 1-c_u, 
  \qquad 
  \^e_{uv}=1-e_{uv},
  \qquad
  c_{uv} = \I[c_u=c_v], 
  \qquad
  \^c_{uv} = \I[c_u\neq c_v]. 
}
Whenever the underlying graph $G$ is clear from the context, we abbreviate the latter as $c_u$, $e_{uv}$, etc. We will occasionally write $c^G_u$, etc., to make the dependence on the underlying graph $G$ explicit.

For each $n\in\IN$, we consider the Markov chain $G^n = (G^n_t)_{t \ge 0}$ on $\sG_n$ with generator
\ben{\label{1}
(\cA_n f)(G) 
= \eta \sum_{1 \le u \le n} r^{\mathrm{v}}_u(G) \bclr{f(G^u)-f(G)} 
+ \rho \sum_{1 \le u < v \le n} r^{\mathrm{e}}_{u,v}(G) \bclr{f(G^{uv})-f(G)},
}
acting on any bounded function $f \colon \sG_n \to \IR$, where
\ba{
r^{\mathrm{v}}_u(G) 
& = \sum_{v:v\neq u}e_{uv}\^c_{uv},\\
r^{\mathrm{e}}_{u,v}(G) 
&= c_{uv}\bclr{s_{\mathrm{c},0}\^e_{uv}+s_{\mathrm{c},1}e_{uv}} + \^c_{uv} \bclr{s_{\mathrm{d},0}\^e_{uv}+s_{\mathrm{d},1}e_{uv}}.
}
This dynamics has two parts, one for vertex-colour updates and one for edge-state updates:
\begin{itemize}
\item \textbf{Vertex updates.} Every vertex flips its colour at rate $\eta m$, where $m$ is the current number of neighbours of opposite colour. 
\item \textbf{Edge updates.} Every pair of vertices switches its edge at rates governed by their colour match (concordant or discordant) and by whether or not they are currently connected:
\begin{itemize}
\item Concordant vertices connect at rate $\rho s_{\mathrm{c},0}$ and disconnect at rate~$\rho s_{\mathrm{c},1}$.
\item Discordant vertices connect at rate $\rho s_{\mathrm{d},0}$ and disconnect at rate~$\rho s_{\mathrm{d},1}$.
\end{itemize}
\end{itemize}
We assume throughout that $\eta$ and $\rho$ are positive constants and that the switching rates $s_{\mathrm{c},0}$, $s_{\mathrm{c},1}$, $s_{\mathrm{d},0}$, and $s_{\mathrm{d},1}$ are non-negative constants, all independent of $n$. In our notation of the switching rates, `$\mathrm{c}$' stands for concordant, `$\mathrm{d}$' stands for discordant, `$0$' for an absent edge and `$1$' for a present edge. Thus, our model has a total of six parameters.  We allow some or all switching rates to be zero.


\subsection{Main results}
\label{sec4}

Our goal is to show that, in the dense graph limit, the coupled dynamics can be \emph{fully quantified} at the graph level. To do so, we develop tools to describe processes of dense coloured graphs and their limits as processes of coloured graphons, and we show that \emph{homogenisation} of the colours can be exploited to derive limiting evolution equations. The latter allows us to control the mutual feedback that is inherent in the \emph{co-evolution} of graph and colours, and to identify scenarios leading to consensus versus polarisation.

To state our main results precisely, we need some definitions and notation (more details will be given in later sections). A \emph{coloured graphon} $(\kappa,c)$ is a pair consisting of a symmetric measurable function $\kappa\colon [0,1]^2 \to [0,1]$, referred to as \emph{graphon}, and a measurable function $c\colon [0,1] \to [0,1]$, referred to as \emph{colouring}. We may interpret $\kappa(\dd x,\dd y)$ as the fraction of edges present between vertices in $\dd x$ and $\dd y$, and $c(\dd x)$ as the fraction of vertices of colour $1$ among vertices in $\dd x$. The space of all coloured graphons is denoted by $\cW$ and is endowed with a pseudo-metric $\dsub$. The set of measure-preserving maps on $[0,1]$ induces a set of equivalence classes on $\cW$, denoted by $\~\cW$, on which $\dsub$ turns out to be a metric. The subspace of $\cW$ for which the colouring is constant is denoted by $\cW_0$ and the corresponding set of equivalence classes by $\~\cW_0$. Every coloured graph~$G\in\cG_n$ induces a coloured graphon~$(\kappa^G,c^G)\in\cW$, where $\kappa^G$ is the \emph{empirical graphon} and~$c^G$ is the \emph{empirical colouring} (see Section~\ref{sec7} for details). 

Our first result asserts the existence of a certain Markov process on $\cW_0$ --- and, by means of the equivalence relation, an associated Markov process on $\~\cW_0$ --- that turns out to be the limiting process of~$G^n$. Two key quantities drive the process: the limiting density $q_t$ of vertices of colour $1$ and the limiting edge density $p_t$. These two quantities, along with additional ordinary differential equations that determine the evolution of the graphon, yield a Markov process on $\cW_0$. More precisely, consider the system of stochastic differential equations
\ben{\label{2}
\left\{
\begin{aligned}
\dd{q_t} &= \sqrt{2\eta\, p_t q_t (1-q_t)}\dd{W_t},\\
\dd{\kappa_t(x,y)} &= \rho\, V\bigl(\kappa_t(x,y), q_t\bigr)\dd t,
\quad (x,y) \in [0,1]^2,
\end{aligned}
\right.
}
with initial condition $(\kappa_0,q_0)$, where $\kappa_0$ is a graphon, $q_0\in(0,1)$, 
\be{
p_t = \int_{[0,1]^2} \dd x\dd y\kappa_t(x,y),
}
$W = (W_t)_{t \ge 0}$ is standard Brownian motion on $\R$, and $V\colon\,[0,1]\times [0,1] \rightarrow [0,1]$ is the function given by
\besn{\label{3}
V(p,q)
= (1-p)&\bcls{\bclr{q^2+(1-q)^2}s_{\mathrm{c},0} + 2q(1-q)s_{\mathrm{d},0}}\\
{}- p&\bcls{\bclr{q^2+(1-q)^2}s_{\mathrm{c},1} + 2q(1-q)s_{\mathrm{d},1}}.
}

\begin{theorem}[Existence and uniqueness of limit] 
\label{thm1}
The following statements hold:
\begin{itemize}
\item[(a)]
The system of stochastic differential equations \eqref{2} has a unique strong solution $(\kappa_t,q_t)_{t\geq 0}$.
\item[(b)] 
If $(\kappa_0,q_0)$ and $(\kappa'_0,q'_0)$ are such that the uncoloured graphons $\kappa_0$ and $\kappa'_0$ lie in the same equivalence class and $q_0=q'_0$, then the corresponding solutions $(\kappa_t,c_t)_{t\ge0}$ and $(\kappa'_t,c'_t)_{t\ge0}$ can be constructed on a common probability space so that they remain in the same coloured graphon equivalence class for all $t\ge0$.
\item[(c)] 
The solution to \eqref{2} induces a $\cW_0$-valued Markov process $(\kappa_t,c_t)_{t\geq 0}$, where $c_t(x) = q_t$ for all $x \in [0,1]$ and all $t\geq 0$, which in turn induces a $\~\cW_0$-valued Markov process $(\~\kappa_t,\~c_t)_{t\geq 0}$.
\end{itemize}
\end{theorem}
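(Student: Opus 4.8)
The plan is to exploit one structural fact: $V(\cdot,q)$ in \eqref{3} is \emph{affine} in its first argument, namely $V(p,q)=A(q)-\bclr{A(q)+B(q)}p$ with $A(q)=\bclr{q^2+(1-q)^2}s_{\mathrm{c},0}+2q(1-q)s_{\mathrm{d},0}\ge 0$ and $B(q)=\bclr{q^2+(1-q)^2}s_{\mathrm{c},1}+2q(1-q)s_{\mathrm{d},1}\ge 0$ on $[0,1]$. Integrating the graphon equation in \eqref{2} over $[0,1]^2$ then shows that $p_t=\int_{[0,1]^2}\kappa_t(x,y)\dd x\dd y$ solves the \emph{autonomous} scalar ODE $\dd{p_t}=\rho V(p_t,q_t)\dd t$, so \eqref{2} reduces to the closed two-dimensional system $\dd{q_t}=\sqrt{2\eta\,p_t q_t(1-q_t)}\dd{W_t}$, $\dd{p_t}=\rho V(p_t,q_t)\dd t$ on $[0,1]^2$, together with, once the path $q_\cdot$ is known, the family of scalar ODEs $\dd{\kappa_t(x,y)}=\rho V(\kappa_t(x,y),q_t)\dd t$ from $\kappa_0(x,y)$, one for each $(x,y)$. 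This last step is a Picard--Lindel\"of problem with Lipschitz right-hand side and continuous driving path, so it has a unique global solution; since $V(0,q)=A(q)\ge0$ and $V(1,q)=-B(q)\le0$ it stays in $[0,1]$, and it inherits symmetry and joint measurability in $(x,y)$ from continuous dependence on the initial value $\kappa_0(x,y)$, so $\kappa_t$ is again a graphon.

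For part~(a) it then remains to handle the two-dimensional system. I would first check that $[0,1]^2$ is invariant: the $p$-drift points inward at $p\in\{0,1\}$ by the signs of $V(0,\cdot)$ and $V(1,\cdot)$ above, and the $q$-diffusion coefficient $\sqrt{2\eta\,pq(1-q)}$ vanishes at $q\in\{0,1\}$ with no accompanying drift, so $q$ stays in $[0,1]$ by a standard comparison/Feller-boundary argument. Weak existence follows from the continuity of the coefficients on the compact set $[0,1]^2$ (tightness of an Euler scheme, or a Skorokhod-representation argument, solving the associated martingale problem). For pathwise uniqueness, the drift $(p,q)\mapsto(\rho V(p,q),0)$ is Lipschitz on $[0,1]^2$, and writing $\sigma^2(p,q)=2\eta\,pq(1-q)$ one has $\abs{\sigma^2(p,q)-\sigma^2(p',q')}\le 2\eta\abs{q-q'}+\tfrac\eta2\abs{p-p'}$, so the diffusion coefficient is H\"older of exponent $\tfrac12$; given two solutions $(p,q),(p',q')$ driven by the same $W$ with the same initial value, I would combine a Gr\"onwall estimate for the noiseless Lipschitz $p$-component with the Yamada--Watanabe approximation of $\abs{\,\cdot\,}$ applied to the scalar $q$-component to conclude $\IE\abs{q_t-q'_t}=0$, hence $p_t=p'_t$, for all $t$. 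Weak existence together with pathwise uniqueness gives the unique strong solution of~(a).

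For part~(b) the point is that the dynamics \eqref{2} is equivariant under measure-preserving maps of $[0,1]$ acting on the spatial variable: if $\varphi$ is measure preserving and $(\kappa_t,q_t)$ solves \eqref{2} from $(\kappa_0,q_0)$ with Brownian motion $W$, then $(\kappa_t\circ(\varphi\times\varphi),q_t)$ solves \eqref{2} from $(\kappa_0\circ(\varphi\times\varphi),q_0)$ with the same $W$, because $\int_{[0,1]^2}\kappa_t\circ(\varphi\times\varphi)=\int_{[0,1]^2}\kappa_t=p_t$ leaves the $q$-equation unchanged and the graphon equation holds pointwise in $(x,y)$. If $\kappa_0$ and $\kappa'_0$ lie in the same equivalence class, there are measure-preserving $\varphi,\varphi'$ with $\kappa_0\circ(\varphi\times\varphi)=\kappa'_0\circ(\varphi'\times\varphi')$ a.e.; I would then run \eqref{2} from this common graphon with the same $W$, and use $q_0=q'_0$, $\int\kappa_0=\int\kappa'_0$ (both equal the integral of the common graphon), the strong uniqueness from~(a) for $(p,q)$, and uniqueness of the $\kappa$-ODE, to obtain $q_t=q'_t$ and $\kappa_t\circ(\varphi\times\varphi)=\kappa'_t\circ(\varphi'\times\varphi')$ for all $t$. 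Since $c_t\equiv q_t\equiv q'_t\equiv c'_t$ is constant, $(\kappa_t,c_t)$ and $(\kappa'_t,c'_t)$ stay in the same coloured-graphon equivalence class, which is~(b).

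For part~(c), since the two-dimensional system is autonomous with a unique strong solution, $(p_t,q_t)$ is a time-homogeneous Markov process by the flow property from uniqueness; appending $\kappa_t$ preserves this, because for $s\ge0$ the shifted process $(\kappa_{t+s},q_{t+s})_{s\ge0}$ is again the unique strong solution of \eqref{2} started at $(\kappa_t,q_t)$ and driven by $(W_{t+s}-W_t)_{s\ge0}$, which is independent of $\cF_t$; hence $(\kappa_t,c_t)=(\kappa_t,q_t)$ with $c_t\equiv q_t$ is a $\cW_0$-valued Markov process. By part~(b), its transition law depends on the starting point only through its $\dsub$-equivalence class, so it projects to a $\~\cW_0$-valued Markov process, which is~(c). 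The step I expect to be the main obstacle is the pathwise uniqueness in~(a): the Fisher--Wright coefficient $\sqrt{2\eta\,pq(1-q)}$ is only $\tfrac12$-H\"older, so contraction arguments are unavailable, and because it depends on $p$ --- which carries the memory of the whole path $q_\cdot$ through the ODE --- the Yamada--Watanabe estimate for $q$ cannot be run in isolation but has to be interlocked with the Gr\"onwall control of $p$.
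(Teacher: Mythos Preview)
Your proposal is correct and follows essentially the same route as the paper's proof: reduce \eqref{2} via affinity of $V(\cdot,q)$ to the autonomous two-dimensional system for $(p_t,q_t)$, establish its strong well-posedness by Yamada--Watanabe (the paper simply cites \cite{S65} and \cite[Theorem~1]{YW71} rather than spelling out the $\tfrac12$-H\"older check and the Gr\"onwall/YW interlocking you describe), then solve the pointwise graphon ODE from $q_\cdot$ and verify invariance of $[0,1]$, symmetry and measurability; parts~(b) and~(c) are handled by exactly the equivariance and strong-uniqueness arguments you outline. The obstacle you flag --- that the diffusion coefficient depends on $p$, which is itself a functional of $q_\cdot$ --- is real but is precisely what the coupled Yamada--Watanabe conditions accommodate, so no additional idea is needed beyond what you have written.
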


Our second result asserts that the processes $(G^n)_{n\in\N}$ converge weakly to the process from Theorem~\ref{thm1} in the path topology of convergence in measure. We will need a quantity that measures the connectedness of the initial graph to ensure denseness: For a graphon $\kappa$, let
\ben{\label{4}
\noverlap(\kappa) = \inf_{x,y\red{\in} [0,1]} \int_0^1 \kappa(x,z)\kappa(z,y)\dd z,
} 
and, for $G\in \cG_n$, let $\noverlap(G) = \noverlap(\kappa^G)$, where $\kappa^G$ is the empirical graphon of $G$.

\begin{theorem}[Convergence of the finite coloured graph process]
\label{thm2}
Let the Markov process $G^n = (G^n_t)_{t \geq 0}$ on $\cG_n$ be defined as in Section~\ref{sec3}. Let $(\kappa',c')\in \cW$ and suppose that
\ben{\label{5}
\lim_{n\to\infty} \dsub\bigl(G^n_0,(\kappa',c')\bigr) = 0 \qquad\text{and}\qquad \liminf_{n\to\infty} \noverlap(G^n_0) > 0.
}
Then, as $n \to \infty$, $(G^n_t)_{t\geq 0}$ converges weakly to the Markov process $(\kappa_t,c_t)_{t\geq 0}$ from Theorem~\ref{thm1} in the path topology of convergence in measure with respect to $(\~\cW,\dsub)$, with initial condition $\kappa_0=\kappa'$ and $c_0(x)=\int_0^1 \dd y c'(y)$ for all $x\in[0,1]$. Furthermore, the finite-dimensional distributions of $(G^n_t)_{t> 0}$ converge to those of $(\kappa_t,c_t)_{t> 0}$.
\end{theorem}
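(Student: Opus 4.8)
The plan is to follow the classical route to convergence to a Markov process --- tightness, characterisation of subsequential limits via a martingale problem, uniqueness --- the novelty being a \emph{homogenisation} estimate that collapses the empirical colouring onto a spatially constant profile and thereby closes the equations. Write $q^n_t=\tfrac1n\sum_u c_u(G^n_t)$, $p^n_t=\binom{n}{2}^{-1}\sum_{u<v}e_{uv}(G^n_t)$ and $(\kappa^n_t,c^n_t)=(\kappa^{G^n_t},c^{G^n_t})$. Two elementary facts set the stage. First, the voter drift vanishes identically: the generator drift of $\sum_u c_u$ equals $\eta\sum_{u,v}e_{uv}(c_v-c_u)=0$ by the symmetry $u\leftrightarrow v$, so $q^n$ is a genuine martingale for every $n$, with predictable quadratic variation $\langle q^n\rangle_t=\int_0^t 2\eta\,n^{-2}D^n_s\,\dd s\le\eta t$, where $D^n_s$ is the number of discordant edges, and with jumps of size $1/n$. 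Second, a single edge flip moves $p^n$ by $\bigo(n^{-2})$ and $\kappa^n$ by $\bigo(n^{-2})$ in $\dsub$, while edge flips occur at total rate $\bigo(n^2)$, so $p^n$ and $\kappa^n$ are, with high probability on compact time intervals, Lipschitz in time; a similar estimate on $\noverlap(G^n_t)$ --- using that each flip perturbs only $\bigo(n)$ of the relevant two-step counts --- together with $\liminf_n\noverlap(G^n_0)>0$ keeps $\liminf_n\inf_{t\le T}\noverlap(G^n_t)>0$ for every $T$, in line with $\noverlap(\kappa_t)>0$, which the ODE in \eqref{2} preserves on compacts.

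\emph{Homogenisation (the crux).} The core estimate is that, for every $\delta\in(0,T]$,
\ben{\label{e:hmg}
\sup_{\delta\le t\le T}\dsub\bclr{(\kappa^n_t,c^n_t),(\kappa^n_t,q^n_t\boldsymbol{1})}\longto 0\qquad\text{in probability,}
}
where $q^n_t\boldsymbol{1}$ denotes the constant colouring $x\mapsto q^n_t$. The mechanism is a separation of time scales: the colour-update part of $\cA_n$ drives the colour field by the heat flow of the dense-graph Laplacian $L_{G^n_t}$, whose spectral gap is of order $n$ because $\noverlap(G^n_t)>0$ forces every pair of vertices to share $\Theta(n)$ common neighbours. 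I would accordingly (a) damp the \emph{mean} inhomogeneity, showing that the part of $c^n_t$ orthogonal to the constants is damped at rate $\Theta(n)$ and hence is $\bigo(1/n)$ already for $t\gg1/n$, uniformly on $[\delta,T]$; and (b) control the surviving \emph{fluctuations} by a second-moment estimate, for which the coalescing dual of the voter model is a convenient device: two dual walks on the dense graph randomise their positions in time $\bigo(1/n)$ and then coalesce at the mean-field rate, so $\Cov\bclr{c_u(t),c_v(t)}$ becomes asymptotically independent of the pair and equal to the variance of $q_t$, whence $\tfrac1n\sum_u\psi(u/n)c_u(G^n_t)$ converges, jointly with $q^n_t$, to $q_t\int_0^1\psi$. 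Converting (a)--(b) into a $\dsub$-statement --- using that $\dsub$ is, up to lower-order terms, dominated by graph-weighted discrepancies of the colouring --- yields \eqref{e:hmg}. Run from $t=0$, the same estimate shows the collapse happens in time $\lito(1)$: invisible to the convergence-in-measure path topology, but the reason the limiting colouring starts from the constant $\int_0^1 c'(y)\,\dd y$ and the reason the finite-dimensional statement is confined to $t>0$.

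\emph{Tightness, identification, uniqueness.} Since $(\~\cW,\dsub)$ is compact and the path topology is convergence in Lebesgue measure --- much weaker than Skorokhod's --- tightness reduces to an averaged non-oscillation bound, which follows from the Lipschitz-in-time control of $\kappa^n$, the bounded quadratic variation of the martingale $q^n$, and \eqref{e:hmg}; moreover \eqref{e:hmg} places every subsequential limit in $\~\cW_0$, with constant colouring $c_t\equiv q_t$. Fix such a limit $(\kappa_t,c_t)_{t\ge0}$ and put $p_t=\int_{[0,1]^2}\kappa_t$. Testing $\int_{[0,1]^2}\phi\,\kappa^n_t$ against bounded $\phi$ and writing $\phi_{uv}=\phi(u/n,v/n)$, the semimartingale drift of $\int\phi\,\kappa^n_t$ is $\rho\,n^{-2}\sum_{u\ne v}\phi_{uv}\bclr{\^e_{uv}(c_{uv}s_{\mathrm{c},0}+\^c_{uv}s_{\mathrm{d},0})-e_{uv}(c_{uv}s_{\mathrm{c},1}+\^c_{uv}s_{\mathrm{d},1})}$; using \eqref{e:hmg} to replace $c_{uv}$ and $\^c_{uv}$ by $(q^n_t)^2+(1-q^n_t)^2$ and $2q^n_t(1-q^n_t)$ --- a step that invokes \eqref{e:hmg} for the graph-weighted moments of $c^n-q^n_t\boldsymbol{1}$, including the pair term $n^{-2}\sum_{u\ne v}e_{uv}(c_u-q^n_t)(c_v-q^n_t)$ that encodes the correlation between pair-discordance and edge-presence --- and using that $V(\cdot,q)$ is affine, the drift collapses to $\rho\int_{[0,1]^2}\phi\,V(\kappa^n_t,q^n_t)$, while the martingale part is $\bigo(1/n)$; hence $\kappa_t$ solves the second line of \eqref{2}. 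Meanwhile \eqref{e:hmg} (cut-closeness of $c^n$ to $q^n_t\boldsymbol{1}$) forces $D^n_s=(1+\lito(1))\,p^n_s\,n^2 q^n_s(1-q^n_s)$, so $\langle q^n\rangle_t\to\int_0^t 2\eta\,p_s q_s(1-q_s)\,\dd s$ and $q^n$ converges to a continuous martingale with that quadratic variation; that is, the joint limit $(\kappa,c)$ solves \eqref{2}. By Theorem~\ref{thm1}(a) this solution is unique and by Theorem~\ref{thm1}(c) it is the asserted Markov process, so the whole sequence converges. Finally, \eqref{e:hmg} holds at each fixed $t>0$ and $(\kappa^n,q^n)$ converges as a process, so $(\kappa^n_t,c^n_t)\to(\kappa_t,c_t)$ in law for each $t>0$, and the Markov property propagates this to all finite-dimensional distributions at positive times.

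\emph{Main obstacle.} The decisive step is \eqref{e:hmg}: proving, quantitatively and uniformly over the simultaneously evolving graph $G^n_t$, that colour inhomogeneity dissipates on the fast $\Theta(n)$ time scale with a rate governed solely by $\noverlap$, and that the colour fluctuations that survive are exactly those carried by $q_t$, so that $q^n_t\boldsymbol{1}$ is the correct centring. A secondary but genuine point, already visible in the graphon identification, is that pair-discordance is in general correlated with edge-presence, so the averaging of $c_{uv}$ against $e_{uv}$ requires \eqref{e:hmg} in its full, pair-level form rather than merely the one-point colour statistics.
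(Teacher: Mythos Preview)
Your architecture --- homogenisation onto $\cW_0$ followed by identification of the collapsed limit --- matches the paper's, which formalises it as $\EE[\dMZ(G^n,\cP(G^n))]\to0$ (Lemma~\ref{lem10}) plus $\cP(G^n)\Rightarrow(\kappa,c)$ (Lemma~\ref{lem14}), combined via Remark~\ref{rem1}; your homogenisation mechanism (dual coalescing walks plus fast mixing on the dense graph) is essentially the content of Lemmas~\ref{lem11} and~\ref{lem13}, and your martingale treatment of $q^n$ is correct. Where you diverge is the identification of the graphon limit, and there your route has a genuine gap: the functionals $\int_{[0,1]^2}\phi\,\kappa^n_t$ are not invariant under vertex relabelling and hence are not well-defined on $\~\cW$. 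Convergence of $G^n$ in $(\~\cW,\dsub)$ along a subsequence tells you nothing about $\int\phi\,\kappa^n$ for fixed $\phi$, and the hypothesis $\dsub(G^n_0,(\kappa',c'))\to0$ does not single out a representative of $\kappa'$ against which to compare. The paper handles this by working throughout in the invariant coordinates --- coloured subgraph densities --- computing $\cA_n(h\circ\bt\circ\cP)$ explicitly (Lemma~\ref{lem7}), matching it to the generator of the subgraph-density diffusion derived from~\eqref{2} (Lemma~\ref{lem9}), and applying Ethier--Kurtz generator convergence together with Proposition~\ref{prop1}(c). Replacing your $\phi$-tests by subgraph densities $t_{\bar F}(\kappa^n)$ would repair the argument and essentially reproduces the paper's route; your direct approach buys conceptual transparency for $q$ but cannot close the graphon part without descending to these invariant coordinates.

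A secondary gap: your Lipschitz-in-time control of $\noverlap(G^n_t)$ does not work. Each vertex is involved in order $n$ edge flips per unit time, so the worst-case deterministic drop in $\min_{i,j}|N_i\cap N_j|$ over $[0,T]$ is order $n$, not $o(n)$. The paper's Lemma~\ref{lem12} instead stochastically dominates the edge process from below by independent two-state chains and applies a Chernoff bound to the resulting binomial lower bound for $|N_i\cap N_j|$; only a pointwise-in-$t$ estimate is obtained, but that is all the Meyer--Zheng integral requires.
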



\subsection{Discussion and outline} 
\label{sec5} 

Our main results provide a complete description of the limiting coloured graphon process in terms of joint evolution equations \eqref{2}  and offers an example of \emph{co-evolution} of individuals and their friendships in a social network.


\paragraph{Solution to the stochastic differential equations \eqref{2}.} 
In proving Theorem~\ref{thm2}, which is our main result, our first order of work  will be to show existence and uniqueness of the solution $(p_t,q_t)_{t \geq 0}$ of the autonomous pair of coupled equations 
\ben{\label{6}
\left\{
\begin{aligned}
\dd{p_t} &= \rho\, V\bclr{p_t,q_t}\dd t,\\
\dd{q_t} &= \sqrt{2\eta\, p_t\,q_t(1-q_t)}\dd{W_t},
\end{aligned}
\right.
}
with initial values 
\be{
p_0 = \int_{[0,1]^2}\dd x\dd y \kappa_0(x,y),
\qquad
q_0 = \int_{[0,1]}\dd x c_0(x).
}
Then, we will insert the solution $q_t$ into \eqref{2} and solve for $\kappa_t(x,y)$ for each $(x,y) \in [0,1]^2$ individually via integration (see Lemma~\ref{lem8}).


\paragraph{Limiting process.} 
Theorem~\ref{thm1} identifies the limiting process $(\kappa_t,c_t)_{t\geq 0}$ as a Markov process that takes values on a subspace of the space of coloured graphons. 
\begin{itemize}
\item  
We see that the path of the coloured graph process converges to a limit by contracting to a subspace $\~\cW_0$ where the colours are \emph{homogenised}. The convergence is in the path topology of convergence in measure (also referred to as the Meyer-Zheng path topology), which amounts to weak convergence in `time~$\times$ space' (for details, see \cite[Section~2.2]{AdHR24} and Section~\ref{sec8} below). In the limit as $n\to\infty$, the rate of contraction diverges, so that the path of the coloured graph spends most of its time close to the subspace in which the limting process lives. 
\item
The equations in \eqref{2} show that the fractions of connected edges and vertices with colour $1$ evolve in a coupled way: $p_t$ moves according to a \emph{stochastic flow} with a drift that depends on $p_t$ and $q_t$, while $q_t$ moves according to a \emph{Fisher-Wright diffusion} with a diffusion coefficient that is proportional to $p_t$. After consensus is reached, corresponding to $q_t \in \{0,1\}$, the diffusion is trapped and the stochastic flow becomes a \emph{deterministic flow}.
\item
The equations in \eqref{2} also show that the graphon $\kappa_t$ moves according to a \emph{stochastic flow} that is driven by the pair $(p_t,q_t)$. Note that, because $p \mapsto V(p,q)$ is linear for every $q \in [0,1]$, we have
\ben{
\int_{[0,1]^2} \dd x\dd y V\bclr{\kappa(x,y),q} = V\left(\int_{[0,1]^2} \dd x\dd y \kappa(x,y),q\right) = V(p,q),
}
so that the second line of \eqref{2} is consistent with the first line of \eqref{6}. 
\end{itemize}


\paragraph{Collapse.}
What Theorem~\ref{thm2} shows is that the system rapidly establishes a \emph{quasi-equilibrium}, in which the colours are randomly redistributed at a fixed colour density, and afterwards \emph{slowly changes the colour density} while continuing to rapidly redistribute the colours at the current colour density. The separation between the fast time scale and the slow time scale allows for an explicit derivation of the limiting evolution equations. The same type of separation occurs in the description of interacting particle systems when going from a \emph{microscopic} description to a \emph{mesoscopic} description, for instance, in the derivation of hydrodynamic limit equations in non-equilibrium statistical physics.     


\paragraph{Consensus versus polarisation of the limiting process.}
Suppose that $s_{\mathrm{c},0}+s_{\mathrm{c},1}>0$ and $s_{\mathrm{d},0}+s_{\mathrm{d},1}>0$. A straightforward computation shows that $V(p,q)$ $=0$ when $p = \alpha_q\,p_c + (1-\alpha_q)\,p_d$ with
\bes{
p_c = \frac{s_{\mathrm{c},0}}{s_{\mathrm{c},0}+s_{\mathrm{c},1}}, \quad 
p_d = \frac{s_{\mathrm{d},0}}{s_{\mathrm{d},0}+s_{\mathrm{d},1}}, \quad
\frac{1-\alpha_q}{\alpha_q} = \frac{s_{\mathrm{d},0}+s_{\mathrm{d},1}}{s_{\mathrm{c},0}
+s_{\mathrm{c},1}}\,\frac{2q(1-q)}{q^2+(1-q)^2}.
}
Note that, because $(q_t)_{t \geq 0}$ is a bounded martingale, $\lim_{t\to\infty} q_t$ exists almost surely.
\begin{itemize}
\item
\emph{Consensus} occurs when $q \in \{0,1\}$, which are absorbing states, in which case $\alpha_0 = \alpha_1 = 1$. After consensus is reached, the fraction of connected edges $p_t$ converges to $p_c$ exponentially fast as $t\to\infty$. In the equilibrium state, all vertices have the same colour, while edges connect and disconnect randomly with the fraction of connected edges fixed at $p_c$ (see Figure~\ref{fig1} below for a simulation).

If $s_{\mathrm{c},0}=s_{\mathrm{d},0}=s_0$ and $s_{\mathrm{c},1}=s_{\mathrm{d},1}=s_1$, then $V(p,q) = s_0(1-p)-s_1 p$, in which case $p_t$ evolves independently of $q_t$ and converges to $\frac{s_0}{s_0+s_1}$ exponentially fast as $t\to\infty$, while $q_t$ eventually gets absorbed in $\{0,1\}$.
\item
\emph{Polarisation} occurs when $p=0$ and $q \notin \{0,1\}$. This is possible only when $p_c = p_d =0$, that is, $s_{\mathrm{c},0} = s_{\mathrm{d},0}=0$, for which disconnected edges cannot connect. The fraction of connected edges $p_t$ converges to $0$ exponentially fast as $t\to\infty$ (at rate at least $s_{\mathrm{c},1} \wedge s_{\mathrm{d},1}$), and $\lim_{t\to\infty} q_t\in (0,1)$ with positive probability (see Figure~\ref{fig2} below for a simulation).

If $s_{c,0} = s_{d,0}=0$ and $s_{c,1} = s_{d,1}=s_1$, then $V(p,q) = - s_1 p$, in which case $p_t$ evolves independently of $q_t$ and is given by $p_t = p_0 \ee^{-\rho s_1t}$. Putting $T(t) = \eta \int_0^t p_u\, \dd u$ as a new time variable, we see that $q_t$ is equal in distribution to $q^*_{T(t)}$ with $(q^*_t)_{t \geq 0}$ the solution to the standard Fisher-Wright diffusion $\dd q^*_t = \sqrt{2q^*_t(1-q^*_t)}\,\dd W_t$, where $(W_t)_{t \geq 0}$ is Brownian motion.  Since $T(\infty) = \eta p_0/\rho s_1$, it follows that polarisation occurs with probability
\bes{ 
  \IP^*(\tau> \eta p_0/\rho s_1),
}
where $\tau$ is the consensus time of the standard Fisher-Wright diffusion, which satisfies $\mathbb{P}^*(\tau<\infty)=1$.
\end{itemize}


\paragraph{Convergence of consensus and polarisation probabilities.} 
An important question is which finite-system quantities converge to the corresponding infinite-system quantities under the topology of convergence in measure. Ultimately, this is a question about continuity with respect to that topology. While a full discussion of this question in the context of coloured graphon-valued processes is beyond the scope of this paper, we consider a few special cases. To this end, let $(\cM_{\~\cW},\dMZ)$ be the metric space of coloured graphon-valued paths (see Sections~\ref{sec8}--\ref{sec9} for more details), let $f\colon \cM_{\~\cW}\to\IR$ be a measurable and bounded function, and let $C_f\subset \cM_{\~\cW}$ be the set of continuity points of $f$ with respect to $\dMZ$. If
\ben{\label{7}
  \IP\bclr{(\kappa_t,c_t)_{t\geq 0}\in C_f}=1,
}
then, by the portmanteau theorem, 
\be{
  \lim_{n\to\infty} \IE\bcls{f\bclr{(G^n_t)_{t\geq 0}}} = \IE\bcls{f\bclr{ (\kappa_t,c_t)_{t\geq 0}}}.
}
\begin{itemize}
\item The most straightforward quantities are \emph{occupation times} over finite time intervals. Let $0<a<b<\infty$ and $0<u<1$. For $(\phi,\omega)\in \cM_{\~\cW}$, define the function
\be{
  f\bclr{(\phi,\omega)} = \int_a^b \I[\-\omega_t\leq u]\dd t,
} 
where $\-\omega_t = \int_0^1 \omega_t(y) \dd y$. This is the amount of time the fraction of colour-$1$ vertices equals or is below $u$ in the time interval $[a,b]$. It is not difficult to see that points of discontinuity with respect to $\dMZ$ are paths for which $\omega$ equals $u$ for a positive amount of time between time points $a$ and $b$. Since the colour function of the limiting process $(c_t)_{t\geq 0} \equiv (q_t)_{t\geq 0}$ is diffusive on $(0,1)$, we see that \eqref{7} holds for this~$f$. Hence
\ben{\label{8}
 \lim_{n\to\infty} \IE\left[\int_a^b \I[q^n_t\leq u]\,\dd t\right] 
 = \int_a^b \IP(q_t \leq u)\dd t,
}
where $q^n_t$ is the fraction of vertices of colour $1$ in $G^n_t$. Now, Theorem~\ref{thm2} states in addition that the finite-dimensional distributions of $(G^n_t)_{t>0}$ converge to those of $(\kappa_t,c_t)_{t>0}$. Hence, pointwise convergence also holds: 
\ben{\label{9}
   \lim_{n\to\infty} \IP(q^n_t\leq u) = \IP(q_t \leq u) \qquad \forall\,t>0.
}
Note that neither \eqref{8} nor \eqref{9} implies the other.
\item 
The result \eqref{9} in particular implies that the absorption probabilities for each fixed time~$t>0$ converge:
\ben{\label{10}
  \lim_{n\to\infty} \IP(q^n_t=0) = \IP(q_t=0), \qquad
  \lim_{n\to\infty} \IP(q^n_t=1) = \IP(q_t=1).
}
It would be interesting to know whether the distribution of $q^n_\infty = \lim_{t\to\infty} q^n_t$ converges to that of $q_\infty=\lim_{t\to\infty} q_t$ as $n\to\infty$. This would require substantial additional work, outside the focus of this paper. Figures \ref{fig1}--\ref{fig2} suggest that the answer is yes.
\end{itemize}


\paragraph{Concordant and discordant edges.} 
The limiting fractions of concordant and discordant connected edges are, respectively,
\bes{
\cC_t = p_t\,\big[(q_t)^2 + (1-q_t)^2\big], \qquad 
\cD_t = p_t\,\big[2q_t(1-q_t)\big]. 
}
Note that 
\bes{
\cC_t - \cD_t = p_t\,\big[q_t - (1-q_t)\big]^2 \geq 0,
}
with equality if and only if $p_t = 0$ and/or $q_t = \tfrac12$. The same expressions hold for the limiting fractions of concordant and discordant disconnected edges, after $p_t$ is replaced by $1-p_t$. This explains why the concordant and discordant edge densities in Figures~\ref{fig1} and~\ref{fig2} are separated, with the concordant edge density dominating the discordant edge density (of course, for finite $n$, the separation is not strict).

\begin{figure}[t!]
\includegraphics[width=0.5\linewidth]{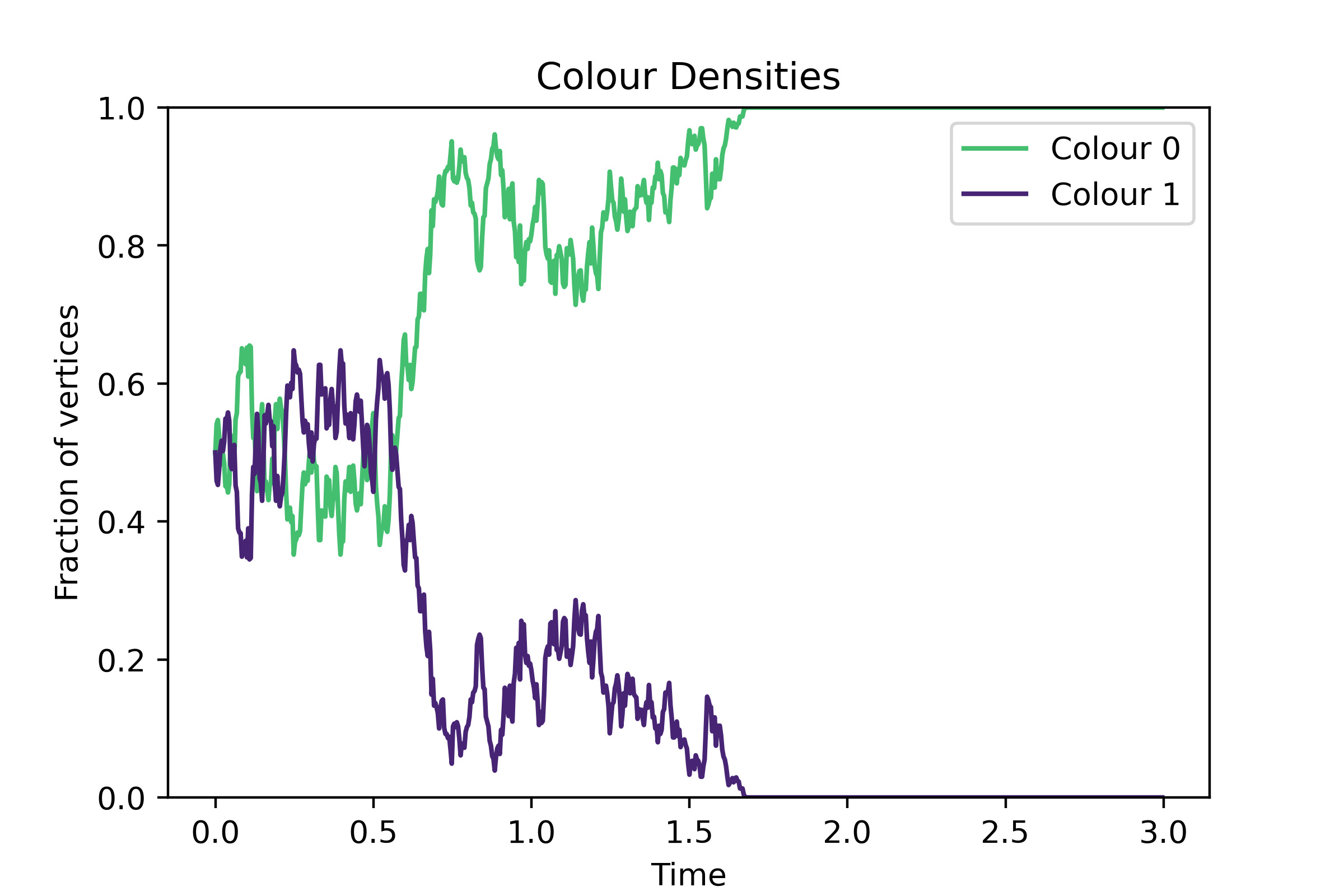}
\includegraphics[width=0.475\linewidth]{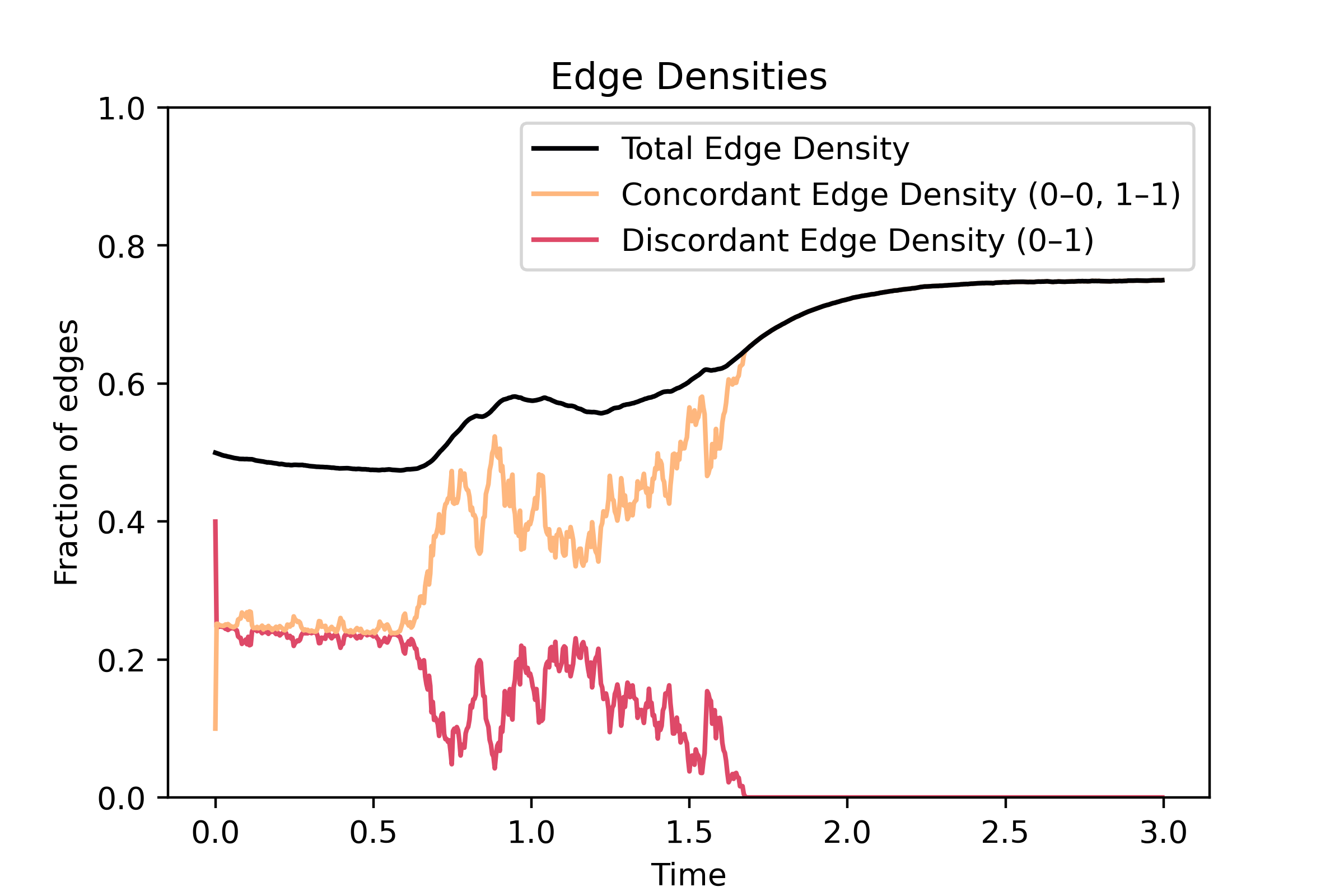}
\caption{These plots visualise a simulation for $n=1000$ with $\eta=1.0$, $\rho=1.1$, $s_{\mathrm{c},0}=1.5$, $s_{\mathrm{d},0}=0.7$, $s_{\mathrm{c},1}=0.5$, and $s_{\mathrm{d},1}=2.0$. Since the diffusion coefficient does not decrease to zero, the process eventually gets absorbed in a state of consensus.}
\label{fig1}
\end{figure}

\paragraph{Simulation.} 
Figure~\ref{fig1} shows a simulation for $n=1000$, $\eta=1.0$, $\rho=1.1$, $s_{\mathrm{c},0} = 1.5$, $s_{\mathrm{c},1} = 0.5$, $s_{\mathrm{d},0} = 0.7$, $s_{\mathrm{d},1} = 2.0$. The graph is initialised as follows: 
\begin{itemize}
\item Vertices are placed at positions $I_n = \{0,\tfrac{1}{n},\ldots,\tfrac{n-1}{n}\} \subset [0,1]$; the first half receives colour $0$ and the second half receives colour $1$. 
\item
Concordant vertices $x,y \in I_n$ are connected with probability $0.1\,(1-|x-y|)$, discordant vertices $x,y \in I_n$ are connected with probability $0.9\,(1-|x-y|)$.
\end{itemize}
The choice of parameters is such that discordant edges are more likely to disconnect than to connect, while concordant edges are more likely to connect than to disconnect. The initial graph is therefore \emph{in conflict} with the dynamics: initially discordant edges are more likely to be present than concordant edges. Consequently, there is a \emph{rapid collapse} at the beginning of the evolution. The plots confirm that the fraction of connected edges evolves as a \emph{stochastic flow}, while the fractions of concordant and discordant connected edges and the fractions of colour $0$ and colour $1$ evolve as \emph{diffusions}. After consensus is reached, the fraction of connected edges evolves as a \emph{deterministic flow}.

\begin{figure}[t!]
\includegraphics[width=0.475\linewidth]{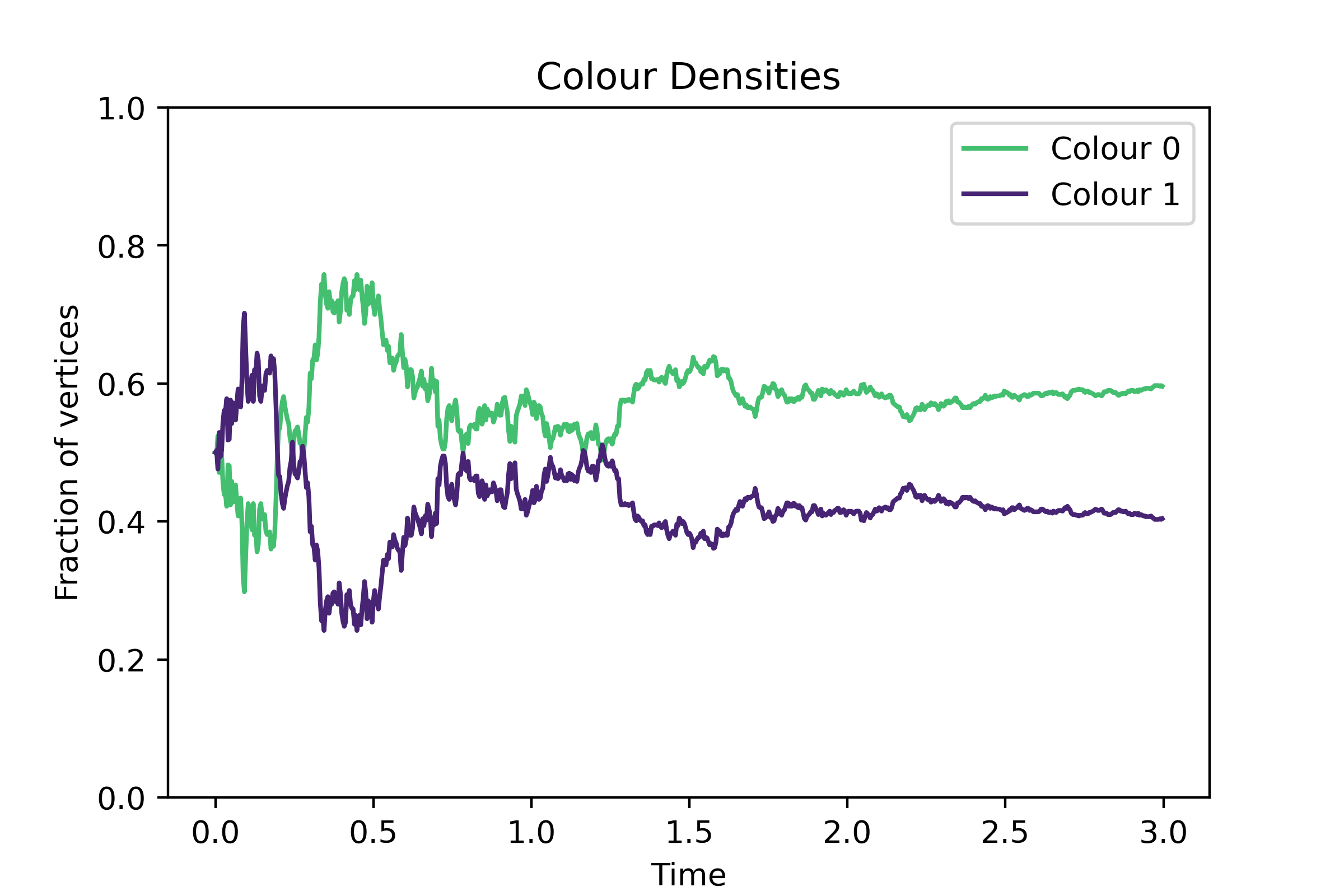}
\includegraphics[width=0.472\linewidth]{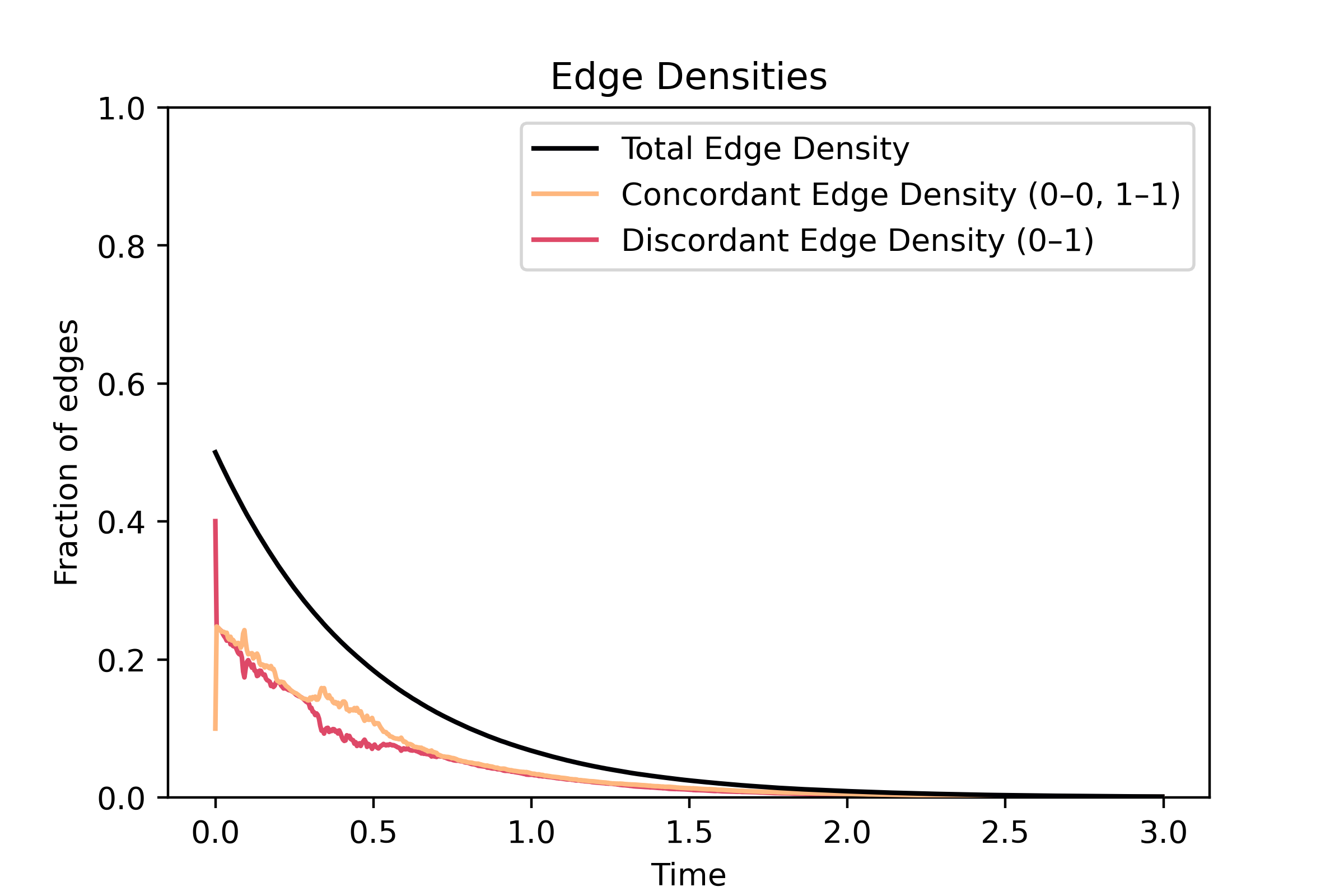}
\caption{These plots visualise a simulation for $n=1000$ with $\eta=1.0$, $\rho=2.0$, $s_{\mathrm{c},0}=s_{\mathrm{d},0}=0.0$ and $s_{\mathrm{c},1}=s_{\mathrm{d},1}=1.0$. The left plot shows the colour densities for each respective colour. The right plot shows the overall edge density as well as the fraction of concordant and discordant edges. The rapid homogenisation is clearly visible close to time 0. Since the diffusion coefficient decreases to zero, the process eventually gets trapped in a state of polarisation.}
\label{fig2}
\end{figure}

Figure~\ref{fig2} shows a simulation for $n=1000$, $\eta=1.0$, $\rho=2.0$, $s_{\mathrm{c},0}=s_{\mathrm{d},0}=0$ and $s_{\mathrm{c},1}=s_{\mathrm{d},1}=1$, with the same initialisation. The plots confirm that the fractions of concordant and discordant connected edges tends to zero rapidly, and the fractions of colour $0$ and colour $1$ tend to non-trivial limiting values. After polarisation is reached, the system is frozen.


\paragraph{Summary of proofs.}
Our proof of Theorem~\ref{thm1} will use \eqref{6}, as explained in Lemma~\ref{lem8} below. Our proof of Theorem~\ref{thm2} is based on an analysis of the evolution of the coloured subgraph densities via associated generators. To enable this, we develop a  theory for dense coloured graph sequences converging to coloured graphons in the cut-metric and subgraph-density-metric, as laid out in Lemmas \ref{lem1}--\ref{lem4} below. Coloured subgraph densities are a powerful tool, yet are difficult to work with. 

In our setting, standard approaches to convergence fail, and our limiting evolution cannot be properly defined on all of the state space. Therefore a new approach has to be developed, involving a \emph{projection} onto the subset of the space of coloured graphons where vertex states \emph{homogenise}, while edge states evolve via stochastic flows governed by the overall colour densities. The projection leads to a system of \emph{coupled equations} for the evolution of the coloured subgraph densities on the subset, which can be solved in closed form despite the fact that there is no moment closure in the original dynamics. A similar approach was used by \cite{AdHR24} for the analysis of a Moran model with random resampling rates.

We use the path topology of convergence in measure (also referred to as the Meyer-Zheng path topology) to show that there is a collapse of the joint dynamics onto the above subset. In Proposition \ref{prop1} below we show that convergence of finitely many coloured subgraph densities in the path topology of convergence in measure is equivalent to that of coloured graphons.

Using the form of the generator along with Taylor expansion, we provide a technical computation that yields a workable expression for the evolution of the subgraph densities in Lemma~\ref{lem6}. The evolution of the coloured subgraph densities under the projection to the subset is subsequently described in Lemma~\ref{lem7}. We show in Lemma~\ref{lem10} that the distance (induced by the path topology of convergence in measure) between the finite coloured graph process and its projection converges to zero as $n \to \infty$. A crucial ingredient is the classical \emph{duality} of the voter model with coalescing random walks, in combination with the fact that the continuous-time random walk on a dense graph sequence \emph{mixes rapidly}, as shown in Lemma~\ref{lem11}. In Lemma~\ref{lem16} we show that the projection of the finite coloured graph process converges weakly to the limiting process as $n\to\infty$ in the path topology of convergence in measure. The proof of this lemma uses Proposition \ref{prop1}(c), convergence of the generators of the projection of the finite coloured graph processes, along with \cite[Theorem~1.6, Chapter~8]{EK86}. Afterwards the proof of Theorem~\ref{thm2} readily follows from straightforward observations on the metric that induces path topology of convergence in measure (see Remark \ref{rem1} below).


\paragraph{Topology and scaling.} 
The path topology of convergence in measure (also referred to as the Meyer-Zheng path topology) is robust against deviations from the limit over small time intervals. This fact ensures that any collapse at the start of the process does not affect the convergence, and simplifies the analysis. The mathematical tools available to handle such convergence in a stronger path topology are limited. While it may be possible to establish a stronger form of convergence on the subspace itself, it seems unlikely that the full process can be shown to converge in a stronger path topology, since the fluctuations around the subspace are unlikely to be uniformly bounded. To achieve stronger results, it may be necessary to incorporate these fluctuations into the limiting process.

With our scaling, our continuous-time model exhibits order $\eta n^2$ vertex updates and order $\rho n^2$ edge updates per unit time (provided at least one of the $s$-parameters is non-zero). This contrasts with the scaling of the discrete-time model in \cite{BS17}, where after $n^2$ time steps there are order $\beta n$ vertex updates and order $n^2-\beta n$ edge updates. The scaling of \cite{BS17} can be replicated with our model by setting $\eta = \frac{\beta}{n}$, but this regime is not the focus of  our paper.


\paragraph{Outline.}
The rest of the paper is organised as follows. In Section~\ref{sec6} we discuss limit theory of  dense coloured graph sequences. More specifically, in Section~\ref{sec7} we define coloured graph processes and their dense limits, which are coloured graphon processes and in  Section~\ref{sec8} we recall some basic facts about the path topology of convergence in measure, also referred to as the Meyer-Zheng path topology. In Section~\ref{sec10}, using Taylor approximation, we write down the generators of coloured subgraph densities and their projections. The calculations required to do so are technical and long. In Section~\ref{sec14} we use these results to prove Theorems~\ref{thm1}--\ref{thm2}. In Appendix~\ref{sec17} we present details of a key step in the proof  of Lemma~\ref{lem7} which  describes the action of the generator on injective subgraph densities.


\section{Limits of dense coloured graph processes}
\label{sec6}

In this section we present the tools that are needed throughout the paper. In Section~\ref{sec7} we introduce dense coloured graphs, coloured graphons and metric space of equivalence classes that govern them. In Section~\ref{sec8} we collect some essential facts about the path topology that we are working with. In Section~\ref{sec9} we present a result that mirrors a result on weak convergence for graphon-valued stochastic processes from \cite[Section~3.1]{AdHR21}. We  extend the observations therein to the dense coloured graph setting and to the path topology of convergence in measure (see Proposition~\ref{prop1} below).


\subsection{Dense coloured graphs and coloured graphons}
\label{sec7}

In this section we prove four lemmas (Lemmas \ref{lem1}--\ref{lem4}) that together provide the tools that are necessary for the description via coloured dense graphs and coloured graphons. These are straightforward extensions of dense graphs and graphons introduced and analysed by \cite{LS06}, \cite{BCLSV08} and \cite{L12}.

We assume that all coloured graphs are simple, that is, without loops or multiple edges. We consider a coloured graph $G$ to be the triple 
\be{
  G = (V(G), E(G), c\colon V(G) \to \{0,1\}),
} 
where $V(G)$ is the finite set of vertices on the graph, usually taken to be $[n] = \{1,\ldots,n\}$ for some $n \in \N$, $E(G) \subset \{ \{i,j\}\colon i,j \in [n]\}$ is the set of edges, and $c_i$ is the colour of vertex $i \in [n]$. As is common practice in graph theory, we denote the number of vertices in $G$ by $v(G)=\abs{V(G)}$ and the number of edges in $G$ by $e(G)=\abs{E(G)}$. Throughout, we use $0$ for a black vertex (denoted by $\bv$) and $1$ for a white vertex (denoted by $\wv$), while $w(G)$ and $b(G)$ denote the number of white, respectively, black vertices in $G$. Thus, $w(G)+b(G)=v(G)$.

As before, let $\cG_n$ be the set of all coloured graphs on $n$ vertices with two colours ($0$ = black or $1$ = white). A sequence $(G^n)_{n\in\N}$ of coloured graphs is called a \emph{dense coloured graph sequence} if $\liminf_{n\to\infty} e(G^n)/n^2>0$. We let $\cW$ be the space consisting of all pairs $(\kappa,c)$, where $\kappa\colon[0,1]^2 \to [0,1]$  is a measurable function satisfying $\kappa(x,y) = \kappa(y,x)$ for all $(x,y) \in [0,1]^2$, and where $c\colon [0,1] \to [0,1]$ is a measurable function. We call the pair $(\kappa,c)\in \cW$ a \emph{coloured graphon}. Any coloured graph $G\in\cG_n$ can be represented canonically as a coloured graphon via the pair
\ben{\label{11}
  \bs{\kappa^n(x,y) 
  &= \begin{cases}
  1 &\text{if there is an edge between vertex $\lceil{nx}\rceil$ and vertex $\lceil{ny}\rceil,$}\\
  0 &\text{otherwise,}
  \end{cases}\\
  c^n(x) 
  &= \begin{cases}
  1 &\text{if vertex $\lceil{nx}\rceil$ is white,}\\
  0 &\text{otherwise.}
  \end{cases}
  }
}
We define a pseudo-metric on $\cW$ by using a coloured subgraph density metric. Let $\cF_k$ denote the set of isomorphism classes of finite coloured graphs given by~$\cF = \{F_i\}_{i\in\N}$, with each $F_i$ being a representative of an isomorphism class. We can define the \emph{coloured subgraph distance} of two coloured graphons $(\kappa_1,c_1)$ and $(\kappa_2,c_2)$ by 
\ben{ \label{12}
  \dsub\bclr{(\kappa_1,c_1),(\kappa_2,c_2)} =  \sum_{i \in\N} 2^{-i} |t_{F_i}(\kappa_1,c_1) - t_{F_i}(\kappa_2,c_2)|, 
}
where, for any coloured graph $F$ on $m$ vertices and any coloured graphon~$(\kappa,c)$,
\ben{ \label{13}
  t_{F}(\kappa,c) = \int_{[0,1]^m}  \dd{x_1} \cdots \dd{x_m}\prod_{\{i,j\} \in E(F)} \kappa(x_i,x_j) 
  \prod_{i=1}^m c(x_i)^{c^F_i}(1-c(x_i))^{1-c^F_i}
}
represents the coloured subgraph density of $F$ in $(\kappa,c)$. If $G\in\cG_n$ is a coloured graph, then $t_{F}(G)$ is defined through its canonical embedding into $\cW$. The coloured graphon representation of a coloured graph is not one-to-one: indeed, the representation in \eqref{11} depends on the ordering of the vertices. Since typically we are interested only in graph properties that are independent of vertex labels (like coloured subgraph densities), it is natural to consider \emph{equivalence classes} of coloured graphons obtained by letting two coloured graphons be equivalent when they are identical up to vertex labels. More precisely, let $\Sigma$ denote the set of all measure-preserving maps on $[0,1]$. For any $\lambda \in \Sigma$ and $(\kappa,c) \in \cW$, define
\ben{ \label{14}
  \kappa^\lambda(x,y) =  \kappa(\lambda(x), \lambda(y)), 
  \qquad 
  c^\lambda(x) =  c(\lambda(x)).
}
Then $(\kappa_1,c_1) \sim (\kappa_2,c_2)$ if and only if there exist $\lambda_1,\lambda_2\in\Sigma$ such that $(\kappa^{\lambda_1}_1,c^{\lambda_1}_1) =(\kappa^{\lambda_2}_2, c^{\lambda_2}_2)$; see \cite[Corollary~10.35]{L12}. This defines a quotient space $\~\cW$ on which $\dsub$ is in fact a metric. We can define an alternative distance $d_{\Box}$ between $(\kappa_1,c_1), (\kappa_2,c_2)\in \cW$ by
\bgn{ 
\label{15}
d_{\Box}((\kappa_1,c_1), (\kappa_2,c_2)) =  \inf_{\lambda \in \Sigma} 
\bbclr{\norm{\kappa_1 -\kappa^\lambda_2}_{\Box} + \norm{c_1 -c^\lambda_2}_{\Box}}, \\
\norm{\kappa}_\Box = \sup_{A,B \subset [0,1]} \bbbabs{ \int_{A \times B} \kappa(x,y)\dd x\dd y }, 
\qquad
\norm{c}_\Box = \sup_{A\subset [0,1]} \bbbabs{\int_{A} c(x)\dd x}. \nonumber
}
The relation between the two distances is captured by the following lemma. The first part is a so-called \emph{counting lemma}, the second part is a so-called \emph{inverse counting lemma}.

\begin{lemma} 
\label{lem1} Let $(\kappa_1,c_1),(\kappa_2,c_2) \in \cW$.
\begin{enumerate}
\item[(a)]
For any coloured graph $F$,
\be{
  |t_F(\kappa_1,c_1) - t_F(\kappa_2,c_2)| \leq e(F) \, \norm{\kappa_1-\kappa_2}_{\Box} + \norm{c_1-c_2}_{\Box}.
}
\item[(b)] 
If, for some $k\in\N$,
\be{
  |t_F(\kappa_1,c_1) - t_F(\kappa_2,c_2)| \leq  2^{-k^2} \qquad\text{for all $F$ on $k$ vertices,}
}
then
\be{
  d_{\Box}\bclr{(\kappa_1,c_1), (\kappa_1,c_1)} \leq \frac{50}{\sqrt{\log k}}.
}
\end{enumerate}
\end{lemma}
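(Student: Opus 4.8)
The plan is to prove the two parts separately, each by reduction to the corresponding uncoloured statement for graphons, which I would treat as known (this is precisely the "straightforward extension" hinted at in the text). The key observation for both parts is that a coloured graphon $(\kappa,c)$ can be encoded as an ordinary graphon on a slightly enlarged vertex set, or —- more economically —- that all the relevant quantities ($t_F$, the cut norms, the measure-preserving reparametrisations) factor into an "edge part" governed by $\kappa$ and a "colour part" governed by $c$, so that the classical counting and inverse counting lemmas of \cite{BCLSV08}, \cite{L12} apply almost verbatim after keeping track of the extra product $\prod_i c(x_i)^{c^F_i}(1-c(x_i))^{1-c^F_i}$ in \eqref{13}.

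\textbf{Part (a) (counting lemma).} First I would fix $F$ on $m$ vertices with edge set $E(F)$ and colour vector $(c^F_i)$, and write $t_F(\kappa,c)$ as the integral in \eqref{13}. The idea is a standard telescoping argument: bound $|t_F(\kappa_1,c_1)-t_F(\kappa_2,c_2)|$ by first swapping $\kappa_1\to\kappa_2$ one edge at a time (keeping $c_1$ fixed), then swapping $c_1\to c_2$ one vertex at a time (keeping $\kappa_2$ fixed). Each edge swap contributes at most $\norm{\kappa_1-\kappa_2}_\Box$ after integrating out all other variables, using that $|\kappa_i|\le 1$ and that the integral over the two endpoint variables of a fixed difference $\kappa_1-\kappa_2$ against a bounded function of the remaining variables is controlled by the cut norm (this is the usual argument that $\int (\kappa_1-\kappa_2)(x,y)\,g(x)h(y)\,dx\,dy$ with $\|g\|_\infty,\|h\|_\infty\le1$ is at most $4\norm{\kappa_1-\kappa_2}_\Box$, which one absorbs; the clean statement gives exactly $e(F)\norm{\kappa_1-\kappa_2}_\Box$). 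For the colour swaps, $c(x)^{c^F_i}(1-c(x))^{1-c^F_i}$ is affine in $c(x)$, so replacing $c_1(x_i)$ by $c_2(x_i)$ changes the integral by $\pm\int (c_1-c_2)(x_i)\,(\text{bounded in }x_{-i})\,dx_i$, which is at most $\norm{c_1-c_2}_\Box$; crucially, because only a single colour factor is being replaced at each step and the remaining factors are bounded by $1$, the $m$ colour swaps collectively contribute at most $\norm{c_1-c_2}_\Box$ rather than $m\norm{c_1-c_2}_\Box$ (here one uses that the total colour density of a single vertex, integrated against the rest, telescopes without accumulating the factor $m$ — this is the point that needs a careful one-line justification).

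\textbf{Part (b) (inverse counting lemma).} Here I would invoke the classical inverse counting / weak regularity route: from closeness of all $k$-vertex coloured subgraph densities one deduces closeness in the (coloured) cut metric $d_\Box$ with the stated rate $50/\sqrt{\log k}$. Concretely, combine $(\kappa,c)$ into one graphon-like object — e.g. append the colouring as an extra row/column, or view $c$ as a graphon that is constant in the second coordinate — so that coloured subgraph densities of $(\kappa,c)$ become ordinary homomorphism densities in the combined object; then apply the known inverse counting lemma (Lemma 10.32 / Theorem 10.26-type statement in \cite{L12}) to get a measure-preserving $\lambda$ aligning both the $\kappa$-part and the $c$-part simultaneously. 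The constant $50$ and the $1/\sqrt{\log k}$ rate are exactly those of the uncoloured case, and the threshold $2^{-k^2}$ on the densities is chosen to feed into that estimate. I would then note that the supremum defining $d_\Box$ in \eqref{15} is achieved (or approached) by the same $\lambda$, giving the bound. (I note in passing that the statement as printed has a typo — the left-hand side reads $d_\Box((\kappa_1,c_1),(\kappa_1,c_1))$ — and should be $d_\Box((\kappa_1,c_1),(\kappa_2,c_2))$; the proof establishes the latter.)

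\textbf{Main obstacle.} The genuinely delicate point is not part (a), which is routine telescoping, but ensuring in part (b) that a \emph{single} measure-preserving reparametrisation $\lambda$ works for both the edge part and the colour part at once, with no loss in the rate. The encoding of $(\kappa,c)$ into a single uncoloured graphon must be done so that homomorphism densities of small graphs in the encoded object recover \emph{both} families of densities ($\kappa$-densities and $c$-densities) and so that the cut norm of the encoded object dominates $\norm{\kappa_1-\kappa_2^\lambda}_\Box+\norm{c_1-c_2^\lambda}_\Box$ up to a universal constant; verifying this compatibility (and that the density threshold $2^{-k^2}$ transfers correctly through the encoding) is where the real bookkeeping lies. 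Everything else then follows from the cited uncoloured results.
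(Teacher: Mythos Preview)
Your overall approach --- telescoping for part~(a), encoding into an uncoloured object and invoking the classical inverse counting lemma for part~(b) --- matches what the paper does: it simply declares the proofs analogous to \cite[Lemmas~10.25 and~10.32]{L12}. Your identification of the main obstacle in~(b) (a single measure-preserving map $\lambda$ must align both $\kappa$ and $c$) is also correct and is precisely where the bookkeeping lies.

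However, there is a genuine gap in your treatment of part~(a). You assert that the $m$ colour swaps ``collectively contribute at most $\norm{c_1-c_2}_\Box$ rather than $m\norm{c_1-c_2}_\Box$'' and that this has a one-line justification. No such justification exists, because the claim is false. Take $F$ to be two isolated white vertices (so $e(F)=0$), $\kappa_1=\kappa_2$ arbitrary, $c_1\equiv 1$, $c_2\equiv 1-\eps$. Then $t_F(\kappa,c)=\bar c^{\,2}$, so
\[
  \babs{t_F(\kappa,c_1)-t_F(\kappa,c_2)} = 1-(1-\eps)^2 = 2\eps-\eps^2,
\]
while $\norm{c_1-c_2}_\Box=\eps$; the stated bound would read $2\eps-\eps^2\le\eps$, which fails for small $\eps>0$. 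The telescoping you describe actually delivers $e(F)\norm{\kappa_1-\kappa_2}_\Box + v(F)\norm{c_1-c_2}_\Box$, and that is the correct coloured analogue of \cite[Lemma~10.25]{L12}. In other words, the inequality as printed in the lemma is missing a factor $v(F)$ on the colour term; this is a typo in the statement, not a phenomenon you should attempt to prove. Since the lemma is used only to establish that $\dsub$ and $d_\Box$ induce the same topology (Lemma~\ref{lem2}), the extra $v(F)$ is harmless downstream.
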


\begin{proof}
The proof of part (a) and (b) is analogous to the proof of \cite[Lemma~10.25]{L12} and \cite[Lemma~10.32]{L12}, respectively.
\end{proof}

The following result characterises the space of coloured graphons endowed with $d_{\Box}$ as a compact metric space.

\begin{lemma} 
\label{lem2}
$(\tilde{\cW},d_{\Box})$ is a compact metric space, and therefore is complete and separable. Furthermore, a sequence $(\kappa_n, c_n)$
converges in the metric $\dsub$ if and only if it is a Cauchy sequence with respect to $d_{\Box}$.  
\end{lemma}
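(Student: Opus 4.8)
The plan is to deduce both assertions of Lemma~\ref{lem2} from the standard graphon theory of \cite{L12}, treating a coloured graphon $(\kappa,c)\in\cW$ as an ordinary graphon on an enlarged symbol alphabet. Concretely, I would encode the pair $(\kappa,c)$ as a single decorated object: think of $c$ as recording a ``vertex weight'' in $[0,1]$ at each point of $[0,1]$, so that $(\kappa,c)$ becomes a graphon with node weights, or equivalently a two-coloured version of the kernel. The metric $d_{\Box}$ in \eqref{15} is exactly the natural cut metric for this enriched object, with the extra term $\norm{c_1-c_2^\lambda}_\Box$ handling the colouring. The key point is that the compactness argument of \cite[Theorem~9.23 / Lemma~8.22]{L12}, which is proved via a martingale/weak-$*$ compactness (or Szemerédi-type) argument, carries over essentially verbatim once one notes that $c$ takes values in the compact interval $[0,1]$ and the cut-norm on colourings is weak-$*$ lower semicontinuous just like the cut-norm on kernels.

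First I would state precisely that $(\~\cW,d_\Box)$ is well-defined, i.e.\ that $d_\Box$ descends to a genuine metric on the quotient $\~\cW$ (antisymmetry follows from the coloured version of \cite[Corollary~10.35]{L12}, already cited in the text). Then I would establish completeness and compactness together: given any sequence $(\kappa_n,c_n)$, apply the (coloured) weak regularity lemma to extract a subsequence whose step-function approximations converge, producing a limit $(\kappa,c)\in\cW$ with $d_\Box((\kappa_n,c_n),(\kappa,c))\to0$ along the subsequence; this is the coloured analogue of \cite[Lemma~8.22, Theorem~9.23]{L12}. Compactness immediately gives completeness and separability (a compact metric space is automatically both). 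For the separability-independent route one could also exhibit the countable dense set of rational-valued step coloured graphons, but invoking compactness is cleaner.

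For the second assertion --- that a sequence converges in $\dsub$ iff it is $d_\Box$-Cauchy --- I would argue as follows. The counting lemma, Lemma~\ref{lem1}(a), shows that each subgraph density $t_{F}(\cdot)$ is Lipschitz in $d_\Box$, hence $d_\Box$-convergence implies $\dsub$-convergence, and in particular a $d_\Box$-Cauchy sequence has all densities $t_{F_i}$ Cauchy in $\R$, so $\dsub$ converges. Conversely, suppose $(\kappa_n,c_n)$ is $\dsub$-convergent, hence $\dsub$-Cauchy. By compactness of $(\~\cW,d_\Box)$ (just proved), pass to any $d_\Box$-convergent subsequence with limit $(\kappa,c)$; by the forward direction this subsequence also converges in $\dsub$, so its $\dsub$-limit must be $(\kappa,c)$. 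It remains to upgrade subsequential $d_\Box$-convergence to full $d_\Box$-convergence: this follows from the inverse counting lemma, Lemma~\ref{lem1}(b), which shows that smallness of finitely many density differences forces $d_\Box$-smallness, so any two $d_\Box$-subsequential limits of a $\dsub$-Cauchy sequence coincide in $\~\cW$ and in fact the whole sequence $d_\Box$-converges. A clean way to package this: the identity map $(\~\cW,d_\Box)\to(\~\cW,\dsub)$ is a continuous bijection from a compact space to a Hausdorff space, hence a homeomorphism, so the two metrics induce the same topology and the same Cauchy sequences (the latter because $d_\Box$ is complete and $\dsub$ is then a complete metric inducing the same topology on a compact set).

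The main obstacle is making sure the coloured weak regularity / compactness argument genuinely goes through with the colouring term present: one must check that the step-function approximation can be done simultaneously for $\kappa$ and $c$ on a common partition, and that the cut-norm $\norm{\cdot}_\Box$ on colourings behaves under measure-preserving rearrangements and weak-$*$ limits just like the kernel cut-norm. Since $c$ is a scalar function bounded in $[0,1]$, this is routine --- it is the ``weights'' case of \cite[Chapter~8]{L12} --- but it is the one place where one cannot merely quote a black-box theorem and must instead point to the proof and note the trivial modification. I would therefore write the proof as: ``The proof is a verbatim extension of \cite[Theorem~9.23]{L12} to the coloured setting, treating the colouring as a $[0,1]$-valued vertex weight; the equivalence of $\dsub$- and $d_\Box$-convergence then follows from Lemma~\ref{lem1}(a)--(b) exactly as in \cite[Theorem~11.22 (or Lemma~10.32)]{L12}.''
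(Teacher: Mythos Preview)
Your proposal is correct and follows essentially the same approach as the paper: compactness is obtained by extending the Szemer\'edi-partition argument of \cite[Theorem~9.23]{L12} to the coloured setting, and the equivalence of $\dsub$-convergence with $d_\Box$-Cauchy is deduced from Lemma~\ref{lem1}. The paper's proof is terser (it merely says ``readily follows from Lemma~\ref{lem1}''), whereas you spell out both directions and add the clean homeomorphism argument, but there is no substantive difference in strategy.
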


\begin{proof}
The  proof of the first statement about compactness is analogous to the proof of \cite[Theorem~9.23]{L12} for the uncoloured version. The proof uses the Szemer\'edi partition lemma, as in \cite[Lemma~9.15]{L12}, which can also be extended analogously. The second statement readily follows from Lemma~\ref{lem1}.
\end{proof}

\begin{lemma}
\label{lem3} 
The linear span of coloured subgraph densities is dense in $C(\tilde{\cW},d_\Box)$.
\end{lemma}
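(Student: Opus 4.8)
My goal is to invoke the Stone–Weierstrass theorem on the compact metric space $(\tilde{\cW}, d_\Box)$, which is legitimate by Lemma~\ref{lem2}. So I need to check three things about the algebra $\mathcal{A}$ generated by the functions $(\kappa,c) \mapsto t_F(\kappa,c)$, $F$ ranging over coloured graphs: (i) $\mathcal{A}$ consists of continuous functions on $(\tilde{\cW}, d_\Box)$; (ii) $\mathcal{A}$ contains the constants; (iii) $\mathcal{A}$ separates points of $\tilde{\cW}$. Continuity is immediate from Lemma~\ref{lem1}(a): that bound shows each $t_F$ is Lipschitz (hence continuous) with respect to $d_\Box$, and it is well-defined on equivalence classes because $t_F$ is invariant under the relabelling \eqref{14}. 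The constants are included because $t_F$ for $F$ the single black vertex, resp.\ single white vertex, gives $(\kappa,c)\mapsto \int_0^1(1-c(x))\,\dd x$ and $(\kappa,c)\mapsto \int_0^1 c(x)\,\dd x$, whose sum is $1$; alternatively one simply adjoins the constants, which does not enlarge the closure.

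The one genuinely substantive point is (iii), the product structure that makes $\mathcal{A}$ an algebra and the separation of points. For the algebra property: given coloured graphs $F_1$ and $F_2$, the product $t_{F_1}(\kappa,c)\, t_{F_2}(\kappa,c)$ is again a (finite, integer) linear combination of coloured subgraph densities $t_H(\kappa,c)$, where $H$ ranges over coloured graphs obtained by taking the disjoint union $F_1 \sqcup F_2$ and then identifying subsets of vertices of $F_1$ with subsets of vertices of $F_2$ in a colour-consistent way. This is the standard multiplicativity identity for homomorphism/subgraph densities (see \cite[Section~5.2.3 and Lemma~5.29]{L12}), and it extends verbatim to the coloured setting because the colour weights $c(x_i)^{c^F_i}(1-c(x_i))^{1-c^F_i}$ in \eqref{13} factor over vertices exactly as edge weights factor over edges. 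Hence $\mathcal{A}$ is indeed an algebra.

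For separation of points: if $(\kappa_1,c_1)\not\sim(\kappa_2,c_2)$, then by the definition of $\tilde{\cW}$ as the quotient on which $\dsub$ is a metric, $\dsub\bclr{(\kappa_1,c_1),(\kappa_2,c_2)}>0$, so by \eqref{12} there is some $i$ with $t_{F_i}(\kappa_1,c_1)\neq t_{F_i}(\kappa_2,c_2)$; that single $t_{F_i}\in\mathcal{A}$ separates the two points. (Equivalently, this is the coloured analogue of the fact that subgraph densities determine a graphon up to weak isomorphism, \cite[Corollary~10.35]{L12}, which is already cited in the construction of $\tilde{\cW}$.) With (i)–(iii) verified, Stone–Weierstrass gives that the uniform closure of $\mathcal{A}$ is all of $C(\tilde{\cW},d_\Box)$; since $\mathcal{A}$ is by definition the linear span of products of coloured subgraph densities, and such products reduce to linear combinations of coloured subgraph densities by the algebra identity above, the linear span of the coloured subgraph densities themselves is dense, which is the assertion. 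The main obstacle is bookkeeping the coloured multiplicativity identity carefully enough to be sure the colour constraints are respected in every identification of vertices — but this is routine once the uncoloured statement in \cite{L12} is in hand.
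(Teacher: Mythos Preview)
Your proposal is correct and follows the same Stone--Weierstrass route that \cite[Theorem~2.2]{DGKR15} takes in the uncoloured case, which is exactly what the paper defers to. One small simplification: for the (non-injective) homomorphism densities defined in \eqref{13}, the product identity is cleaner than you state --- since the integrals over disjoint sets of variables factor, one has $t_{F_1}(\kappa,c)\,t_{F_2}(\kappa,c)=t_{F_1\sqcup F_2}(\kappa,c)$ exactly, with no vertex identifications needed (those arise only for injective densities), so the algebra property is immediate.
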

\begin{proof} 
The proof is analogous to the proof of \cite[Theorem~2.2]{DGKR15} for the uncoloured version.
\end{proof}

The key result from dense graph theory also holds in the setting of dense coloured graphs.

\begin{lemma} \label{lem4}
Let $(G_n)_{n \in \N}$ be a dense coloured graph sequence that is a Cauchy sequence with respect to $\dsub$. Then there exists a coloured graphon $(\kappa,c) \in \cW$ such that
\be{ 
\lim_{n\to\infty} \dsub\clr{(\kappa^n,c^n),(\kappa,c)} = 0. 
}
\end{lemma}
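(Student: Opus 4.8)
The plan is to obtain Lemma~\ref{lem4} as a soft consequence of the completeness half of Lemma~\ref{lem2}, together with the counting and inverse counting lemmas collected in Lemma~\ref{lem1}; the whole argument runs parallel to the uncoloured case treated in \cite{L12}. Write $(\kappa^n,c^n)\in\cW$ for the coloured graphon canonically associated with $G_n$ via~\eqref{11}, and also, abusively, for its equivalence class in $\~\cW$. I would split the proof into three steps: \textbf{(i)} upgrade the hypothesis that $(\kappa^n,c^n)$ is Cauchy for $\dsub$ to the statement that it is Cauchy for $d_\Box$, via the inverse counting lemma, Lemma~\ref{lem1}(b); \textbf{(ii)} invoke compactness --- hence completeness --- of $(\~\cW,d_\Box)$ from Lemma~\ref{lem2} to extract a limit, i.e.\ some $(\kappa,c)\in\cW$ with $d_\Box\bclr{(\kappa^n,c^n),(\kappa,c)}\to0$; \textbf{(iii)} convert $d_\Box$-convergence back into $\dsub$-convergence via the counting lemma, Lemma~\ref{lem1}(a). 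Since the second assertion of Lemma~\ref{lem2} already records that a sequence converges for $\dsub$ if and only if it is Cauchy for $d_\Box$, step~(iii) is essentially automatic, and the only genuine work lies in step~(i).

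For step~(i), fix $\eps>0$ and choose $k\in\N$ so large that $50/\sqrt{\log k}<\eps$. There are only finitely many coloured graphs on $k$ vertices, so I can fix $N=N(k)\in\N$ such that every such graph occurs as $F_i$ for some $i\le N$ in the enumeration $\cF=\{F_i\}_{i\in\N}$ of~\eqref{12}. If $\dsub\bclr{(\kappa_1,c_1),(\kappa_2,c_2)}<2^{-N-k^2}$, then, since every term of the series in~\eqref{12} is nonnegative, $2^{-i}\abs{t_{F_i}(\kappa_1,c_1)-t_{F_i}(\kappa_2,c_2)}<2^{-N-k^2}$ for each $i$, hence $\abs{t_F(\kappa_1,c_1)-t_F(\kappa_2,c_2)}\le2^{-k^2}$ for every coloured graph $F$ on $k$ vertices, and Lemma~\ref{lem1}(b) yields $d_\Box\bclr{(\kappa_1,c_1),(\kappa_2,c_2)}\le50/\sqrt{\log k}<\eps$. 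Applying this with $(\kappa_1,c_1)$, $(\kappa_2,c_2)$ the empirical coloured graphons of $G_m$, $G_n$ and using that $(G_n)$ is $\dsub$-Cauchy then shows that $(\kappa^n,c^n)$ is $d_\Box$-Cauchy. (Note that the denseness hypothesis $\liminf_n e(G_n)/n^2>0$ plays no role in the argument; it is the standing assumption of this section and is merely carried along.)

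For steps~(ii)--(iii): by Lemma~\ref{lem2} the space $(\~\cW,d_\Box)$ is compact, hence complete, so the $d_\Box$-Cauchy sequence converges for $d_\Box$ to some class in $\~\cW$, of which I pick a representative $(\kappa,c)\in\cW$. Finally, for each fixed $i$, taking the infimum over measure-preserving relabellings in Lemma~\ref{lem1}(a) and using the invariance of $t_{F_i}$ under such relabellings gives $\abs{t_{F_i}(\kappa^n,c^n)-t_{F_i}(\kappa,c)}\le(e(F_i)+1)\,d_\Box\bclr{(\kappa^n,c^n),(\kappa,c)}\to0$ as $n\to\infty$; since in addition every such term is bounded by $1$ uniformly in $n$, the series in~\eqref{12} converges uniformly, and dominated convergence for series yields $\dsub\bclr{(\kappa^n,c^n),(\kappa,c)}\to0$, which is the assertion. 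I do not anticipate a real obstacle: all the depth is already packed into Lemma~\ref{lem1} and the Szemer\'edi-partition argument underpinning Lemma~\ref{lem2}, and the present statement is a formal corollary. The only point that requires a little care is the index bookkeeping in step~(i), mediating between the weighted-series form of $\dsub$ in~\eqref{12} and the ``for all $F$ on $k$ vertices'' hypothesis of Lemma~\ref{lem1}(b).
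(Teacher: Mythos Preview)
Your proposal is correct and is essentially an explicit unpacking of the paper's one-line proof, which reads in full ``The claim follows from Lemma~\ref{lem2}.'' Both arguments use the same ingredients: your steps (ii)--(iii) are precisely the backward direction of the second assertion of Lemma~\ref{lem2}, and your step~(i) (via Lemma~\ref{lem1}(b)) is what is implicitly needed to feed into that assertion; the paper simply leaves this unpacked.
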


\begin{proof}
The result follows from Lemma~\ref{lem2}.
\end{proof}

A convenient way of building a random coloured graph $G^n$ on $n$ vertices from a coloured graphon $(\kappa,c)$ is the following. Let $U_1,\dots,U_n$ be i.i.d.\ uniform $[0,1]$-valued random variables. For each pair of vertices $i$ and $j$, connect $i$ and $j$ with probability $\kappa(U_i,U_j)$, independently of all the other edges. Assign to each vertex colour $1$ with probability $c(U_i)$ and colour $0$ with probability $1-c(U_i)$. It is not difficult to prove that
\be{
\lim_{n\to\infty} \dsub\bclr{(\kappa^n,c^n), (\kappa,c)} = 0 \quad \text{ almost surely},
}
which is a law of large numbers for the coloured subgraph density metric. The proof again follows by analogy with the uncoloured version (see, for example, \cite[Theorem~2.1]{AR16}). 

For finite coloured graphs it is typically easier to work with the set of \emph{injective} homomorphisms, which we denote by $\inj(F,G)$. For $F$ a coloured graph on $k$ vertices and $G\in\cG_n$, we define the injective coloured subgraph density of $F$ in $G$ to be
\be{
  t^{\inj}_{F}(G) =
  \begin{cases}
  \displaystyle\frac{\abs{\inj(F,G)}}{(n)_k} \in[0,1], &\text{if } k\leq n,\\
  0, &\text{otherwise.}
  \end{cases} 
}
It is easy to see that
\ben{ \label{16}
  \babs{t^{\inj}_{F}(G^n)-t_{F}((\kappa^n,c^n))} \leq \frac{C_F}{n}
}
for some constant $C_F$ that depends on $F$. Hence, as far as the metric $\dsub$ is concerned, it makes little difference whether we are using $t^{\inj}_{F}$ or $t_{F}$ in the large-$n$ limit.

Finally, for any uncoloured graph $\-F$ on $m$ vertices, we define the usual uncoloured subgraph density of $\-F$ in $(\kappa,c)$ to be 
\be{    
  t_{\bar{F}}(\kappa) = \int_{[0,1]^m}  \dd{x_1} \cdots \dd{x_m} \prod_{\{i,j\} \in E(F)} \kappa(x_i,x_j).
}
It is clear that if $\cC(\-F)$ is the set of all coloured graphs that can be obtained by colouring the vertices of $\-F$ in all possible ways, then for any $(\kappa,c)\in\cW$ we have
\be{
  t_{\bar{F}}(\kappa) = \sum_{H\in\cC(\-F)} t_{H}(\kappa,c).
}


\subsection{Path topology of convergence in measure}
\label{sec8}

We now collect and discuss just the essential facts about the path topology of convergence in measure that are required for our purposes. This topology was introduced by \cite{MZ84} (building on earlier work) through so-called pseudopaths: viewing paths as point-measures on the product of time and state space, and then defining the topology via weak convergence on that product space. However, a more illuminating approach for our setting was developed by \cite{K91}, who works directly with paths (or rather, their usual equivalence classes under almost-everywhere equality) and defines the topology by means of a suitable metric.

In order not to obfuscate these abstract concepts with the specifics of our framework, we work here with a general metric space $(E,r)$. Let $\cM_{E,r}$ be the space of measurable functions $x\colon \R_+ \to E$, identified under equality almost everywhere, so that all such functions that differ only on a null set are considered the same element. For $x,y \in \cM_{E,r}$ with $x = (x_t)_{t \ge 0}$ and $y = (y_t)_{t \ge 0}$, define
\ben{ \label{17}
\dMZ(x,y) = \int_0^\infty \bclr{1\wedge r(x_t,y_t)} \ee^{-t}\dd t.
}
As shown by \cite[Section~4]{K91}, this metric renders $\cM_{E,r}$ a complete, separable metric space. Hence, tightness and relative compactness are equivalent on $\cM_{E,r}$ by Prohorov's Theorem. An alternative metric that may make the term ``convergence in measure'' more transparent, is given by
\be{
 \dm(x,y) = \inf\bbclc{\eps>0 : \int_0^\infty \I[r(x_t,y_t)>\eps]\ee^{-t}\dd t \leq \eps};
}
see \cite[Section~3]{Aldous89}. The two metrics are related by the following lemma. We omit the proof, as it follows from standard techniques.

\begin{lemma} 
For any $x,y\in\cM_{E,r}$,
\ben{\label{18}
\tfrac12 \dMZ(x,y) \leq \dm(x,y) \leq \sqrt{\dMZ(x,y)}.
}
\end{lemma}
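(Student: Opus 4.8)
The plan is to unpack both metrics and compare them via elementary estimates on the integral $\int_0^\infty \I[r(x_t,y_t)>\eps]\ee^{-t}\dd t$ and its truncated-distance counterpart. Throughout, write $g(t) = 1\wedge r(x_t,y_t)$, so that $\dMZ(x,y) = \int_0^\infty g(t)\ee^{-t}\dd t =: D$ and $\dm(x,y) =: \delta$. Note $0\le g \le 1$ and, for any $\eps\in(0,1]$, the set $\{t : r(x_t,y_t)>\eps\} = \{t : g(t)>\eps\}$.

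\emph{Lower bound $\tfrac12 D \le \delta$.} First I would observe that for the specific value $\eps = \delta$ (if $\delta\ge 1$ the bound $\tfrac12 D\le \tfrac12\le \delta$ is trivial, so assume $\delta<1$; and by the definition of $\dm$ as an infimum, the defining inequality holds with $\eps=\delta$, either exactly or by a limiting argument letting $\eps\downarrow\delta$) we have $\int_0^\infty \I[g(t)>\delta]\ee^{-t}\dd t \le \delta$. Now split the integral defining $D$ according to whether $g(t)$ exceeds $\delta$ or not:
\be{
D = \int_{\{g>\delta\}} g(t)\ee^{-t}\dd t + \int_{\{g\le\delta\}} g(t)\ee^{-t}\dd t
\le \int_{\{g>\delta\}} \ee^{-t}\dd t + \delta\int_0^\infty \ee^{-t}\dd t
\le \delta + \delta = 2\delta,
}
using $g\le 1$ on the first piece and $g\le\delta$ on the second. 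This gives $D\le 2\delta$, i.e.\ $\tfrac12\dMZ(x,y)\le\dm(x,y)$.

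\emph{Upper bound $\delta \le \sqrt D$.} Here I would use Markov's inequality with respect to the finite measure $\ee^{-t}\dd t$. For any $\eps\in(0,1]$,
\be{
\int_0^\infty \I[r(x_t,y_t)>\eps]\ee^{-t}\dd t
= \int_0^\infty \I[g(t)>\eps]\ee^{-t}\dd t
\le \frac{1}{\eps}\int_0^\infty g(t)\ee^{-t}\dd t
= \frac{D}{\eps}.
}
Choosing $\eps = \sqrt D$ (which lies in $(0,1]$ since $D\le 1$; the case $D=0$ is immediate), the right-hand side becomes $D/\sqrt D = \sqrt D \le \eps$, so $\eps=\sqrt D$ satisfies the condition in the infimum defining $\dm$, whence $\delta = \dm(x,y)\le\sqrt D = \sqrt{\dMZ(x,y)}$.

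Combining the two parts yields (18). There is no real obstacle here; the only mild subtlety is the boundary bookkeeping—ensuring the chosen $\eps$ lies in $(0,1]$ so that the truncation is inactive, and handling the degenerate cases $D=0$ and $\delta\ge 1$ separately—which is why the paper simply states that the result follows from standard techniques and omits the details.
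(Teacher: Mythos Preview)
Your proof is correct and is precisely the standard argument the paper alludes to; note that the paper itself omits the proof entirely (``it follows from standard techniques''), so there is nothing to compare against. The only minor point worth tightening is the edge case $\eps=1$ in the Markov step: rather than identifying $\{r>\eps\}$ with $\{g>\eps\}$, it is cleaner to observe directly that $\eps\,\I[r(x_t,y_t)>\eps]\le 1\wedge r(x_t,y_t)$ for all $\eps\in(0,1]$, which immediately gives $h(\eps)\le D/\eps$ without any case distinction.
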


Our first result is a characterisation of tightness in $(\cM_{E,r},\dMZ)$. 

\begin{theorem}[{\cite[Theorem~4.6]{K91}}]
\label{thm3} 
Let $(X^n)_{n\in\N}$ be a sequence of $\cM_{E,r}$-valued processes. The family is tight in $(\cM_{E,r},\dMZ)$ if and only if the following hold:
\begin{itemize}
\item[$(i)$] For every $\eps>0$, there is a compact $K\subset E$ such that\/ $\sup_{n\in\N} \int _0^\infty \IP(X^n_t\not\in K)\ee^{-t}\dd t \leq \eps$.
\item[$(ii)$]
$\lim_{h \downarrow 0}\sup_{n\in\N} \int_0^\infty\IE\cls{1\wedge r(X^n_{t},X^n_{t+h})}\ee^{-t}\dd t = 0$.
\end{itemize}
\end{theorem}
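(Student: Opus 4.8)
The plan is to deduce the characterisation from Prohorov's theorem --- applicable here because $(\cM_{E,r},\dMZ)$ is complete and separable --- together with a deterministic, Arzel\`a--Ascoli-type description of the relatively compact subsets of $(\cM_{E,r},\dMZ)$. Concretely, I would first establish the following: a set $A\subset\cM_{E,r}$ is relatively compact if and only if
\begin{itemize}
\item[(a)] for every $\delta>0$ there is a compact $K\subset E$ with $\displaystyle\sup_{x\in A}\int_0^\infty\I[x_t\notin K]\,\ee^{-t}\dd t\leq\delta$, and
\item[(b)] $\displaystyle\lim_{h\downarrow0}\,\sup_{x\in A}\int_0^\infty\bclr{1\wedge r(x_t,x_{t+h})}\,\ee^{-t}\dd t=0$.
\end{itemize}
Granting this, the theorem follows by turning (a)--(b) into compact \emph{sets of paths} that carry most of the mass (the ``if'' direction), and conversely by reading (a)--(b) off the compacts supplied by tightness (the ``only if'' direction).

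For the deterministic criterion the natural device is the \emph{pseudopath} embedding $\iota\colon\cM_{E,r}\to\cP(\R_+\times E)$, $\iota(x)=\mu^x$ with $\mu^x(\dd t,\dd y)=\ee^{-t}\dd t\,\delta_{x_t}(\dd y)$, which is a homeomorphism onto its image when $\cP(\R_+\times E)$ carries the topology of weak convergence and $\cM_{E,r}$ carries $\dMZ$ --- this is exactly the Meyer--Zheng viewpoint, and the homeomorphism property is precisely what $\dMZ$ in \eqref{17} is tailored to give. If $A$ satisfies (a), then all $\mu^x$, $x\in A$, share the fixed time marginal $\ee^{-t}\dd t$ and, by (a), have uniformly tight $E$-marginals, so $\{\mu^x:x\in A\}$ is tight in $\cP(\R_+\times E)$; along any sequence in $A$ extract $\mu^{x^m}\Rightarrow\mu$, disintegrate $\mu=\ee^{-t}\dd t\,\nu_t$, and invoke (b) in the limit --- coupling the joint laws of $(x^m_t,x^m_{t+h})$ via Skorokhod representation and using that $(t,y,y')\mapsto1\wedge r(y,y')$ is bounded and continuous --- to conclude that $\nu_t=\delta_{y(t)}$ for a.e.\ $t$ and some measurable $y\in\cM_{E,r}$; hence $\mu^{x^m}\Rightarrow\mu^y$, i.e.\ $\dMZ(x^m,y)\to0$. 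Conversely, a relatively compact $A$ is totally bounded, hence covered by finitely many $\dMZ$-balls of radius $\varrho$; each centre satisfies (a) (inner regularity of its pushforward probability measure on the Polish space $E$) and (b) (dominated convergence plus density of uniformly continuous maps in $L^1(\ee^{-t}\dd t)$), and a $\dMZ$-ball of radius $\varrho$ perturbs the quantities in (a)--(b) by $\bigo(\varrho)$, so letting $\varrho\downarrow0$ along a countable net yields (a)--(b) for $A$ itself.

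For the ``if'' direction, fix $\eps>0$. By (i) choose a compact $K\subset E$ with $\sup_n\int_0^\infty\IP(X^n_t\notin K)\ee^{-t}\dd t\leq\eps^2$, so Markov's inequality gives $\IP\bclr{\int_0^\infty\I[X^n_t\notin K]\ee^{-t}\dd t>\eps}\leq\eps$. By (ii) choose $h_j\downarrow0$ with $\sup_n\int_0^\infty\IE\bcls{1\wedge r(X^n_t,X^n_{t+h_j})}\ee^{-t}\dd t\leq\eps\,2^{-j}/j$, so $\IP\bclr{\int_0^\infty(1\wedge r(X^n_t,X^n_{t+h_j}))\ee^{-t}\dd t>1/j}\leq\eps\,2^{-j}$. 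Let $A$ be the set of $x\in\cM_{E,r}$ with $\int_0^\infty\I[x_t\notin K]\ee^{-t}\dd t\leq\eps$ and $\int_0^\infty(1\wedge r(x_t,x_{t+h_j}))\ee^{-t}\dd t\leq1/j$ for all $j$; then $A$ satisfies (a) (take $K$ itself) and (b) (the $h_j$ witness arbitrarily small moduli uniformly over $A$), so $\overline A$ is compact by the criterion, while $\IP(X^n\notin\overline A)\leq2\eps$ for all $n$, and since $\eps$ was arbitrary the family is tight. For the ``only if'' direction, given tightness pick for each $\eps>0$ a compact $\cK_\eps\subset\cM_{E,r}$ with $\sup_n\IP(X^n\notin\cK_\eps)\leq\eps$, apply the criterion to $\cK_\eps$ to extract the compact $K$ from (a) and the modulus bound from (b), and split each of $\int_0^\infty\IP(X^n_t\notin K)\ee^{-t}\dd t$ and $\int_0^\infty\IE[1\wedge r(X^n_t,X^n_{t+h})]\ee^{-t}\dd t$ over $\{X^n\in\cK_\eps\}$ and its complement, bounding the complementary contribution by $\eps$, respectively by $1\wedge r\leq1$; this delivers (i) and (ii).

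The main obstacle is the deterministic compactness criterion, and within it the use of (b): showing that a Meyer--Zheng weak limit of genuine paths is again a genuine path --- that the disintegration $\nu_t$ of the limiting pseudopath is a.e.\ a Dirac mass rather than a ``spread-out'' measure --- is exactly where the modulus-of-continuity-in-measure hypothesis is indispensable, and the delicate point is to pass (b) to the weak limit of the joint laws of $(X^n_t,X^n_{t+h})$ with enough uniformity in $h$. Everything else (Prohorov, Markov's inequality, the pseudopath homeomorphism, and the perturbation estimates for $\dMZ$-balls) is routine.
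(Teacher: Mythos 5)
The paper states this theorem as a quotation of \cite[Theorem~4.6]{K91} and offers no proof, so there is no in-paper argument to compare against; your route --- Prohorov's theorem together with an Arzel\`a--Ascoli-type deterministic compactness criterion for $(\cM_{E,r},\dMZ)$, proved through the pseudopath embedding --- is indeed the one Kurtz follows. There is, however, a genuine gap in the ``if'' direction, in a step you state confidently rather than flag as delicate. You define $A$ by imposing $m(x,h_j):=\int_0^\infty\bclr{1\wedge r(x_t,x_{t+h_j})}\ee^{-t}\dd t\leq1/j$ at the countably many $h_j$ supplied by (ii), and then assert that ``the $h_j$ witness arbitrarily small moduli uniformly over $A$'', i.e.\ that (b) holds. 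This does not follow: bounding $m(x,\cdot)$ along an arbitrary sequence $h_j\downarrow0$ does not control it at intermediate $h$. For instance, with $E=[-1,1]$, $x^j_t=\sin(2\pi M^{2^j}t)\,\I[t\leq1]$ and $h_i=M^{-2^i}$, the number $M^{2^j}h_i$ is an exact integer for $j\geq i$ and is at most $M^{-2^{i-1}}$ for $j<i$, so $\sup_j m(x^j,h_i)\to0$ as $i\to\infty$; yet $\dMZ(x^j,x^k)$ is bounded below for $j\neq k$, hence $\{x^j\}$ is not relatively compact and (b) must fail (for generic small $h$, some $M^{2^j}h$ lies near a half-integer). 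To repair this, take $h_j=2^{-j}$ and impose a \emph{summable} modulus budget (your $1/j$ is not summable): any $h\in(0,2^{-j_0})$ has a binary expansion $\sum_{i>j_0}a_i2^{-i}$, and iterating the subadditivity estimate $m(x,h'+h'')\leq m(x,h')+\ee^{h'}m(x,h'')$ gives $m(x,h)\leq\ee\sum_{i>j_0}m(x,2^{-i})$, which goes to zero uniformly over $A$ as $j_0\to\infty$, delivering (b).

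Two smaller points in the converse of the deterministic criterion. First, the claim that a $\dMZ$-ball of radius $\varrho$ perturbs the containment quantity in (a) by $\bigo(\varrho)$ fails when $E$ is Polish but not locally compact: a path $y$ with $\dMZ(x,y)\leq\varrho$ can spend essentially all of its time outside the compact $K$ chosen for $x$, because the metric $\alpha$-neighbourhood of a compact set need not be relatively compact. The sound route is through the pseudopath map: continuity of $\iota$ makes $\iota(\overline A)$ compact in $\cP(\R_+\times E)$, hence tight by Prohorov, and the $E$-projection of a tightness compact supplies $K$. Second, the step you rightly flag as the main obstacle --- that a weak limit of pseudopaths is again a pseudopath --- does need more than a gesture at Skorokhod representation: the standard mechanism is to fix a countable point-separating family of bounded Lipschitz $\varphi$, use Kolmogorov--Riesz to show $\{\varphi(x^m_\cdot)\}_m$ is relatively compact in $L^1(\ee^{-t}\dd t)$ (this is precisely where hypothesis (b) is consumed), and then pass $\varphi(x^m_\cdot)$ and $\varphi^2(x^m_\cdot)$ to their $L^1$ limits simultaneously to conclude $\int\varphi^2\dd{\nu_t}=\bclr{\int\varphi\dd{\nu_t}}^2$ for a.e.\ $t$, which forces each $\nu_t$ to be a Dirac mass.
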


It is clear that, if $(E,r)$ is compact, then $(i)$ in Theorem~\ref{thm3} is automatically satisfied. This will be the case in our setting.

Our second result is a characterisation of weak convergence in $(\cM_{E,r},\dMZ)$.

\begin{theorem}[{\cite[Theorem~4.8]{K91}}]
\label{thm4} 
Let $(X^n)_{n\in\N}$ be a sequence of $\cM_{E,r}$-valued processes that is tight. Then $X^n$ converges weakly in $(\cM_{E,r},\dMZ)$ if and only if 
\ben{\label{19}
\int_{\IR_+^k} \IE[f(X^n_{t_1},\dots,X^n_{t_k})] \ee^{-(t_1+\dots+t_k)}\dd{t_1}\cdots \dd{t_k}
}
converges for every $k\in\N$ and every $f\in C_b(E^k)$. 
\end{theorem}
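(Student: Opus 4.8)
\textbf{Proof proposal for Theorem~\ref{thm4}.}

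The plan is to reduce weak convergence in $(\cM_{E,r},\dMZ)$ to convergence of the integral functionals \eqref{19}, using the fact that tightness is already assumed. The forward implication is immediate: if $X^n \Rightarrow X$ in $(\cM_{E,r},\dMZ)$, then for fixed $t_1,\dots,t_k$ the evaluation map $x \mapsto (x_{t_1},\dots,x_{t_k})$ is \emph{not} continuous on $\cM_{E,r}$ (evaluations are not even well-defined modulo null sets), so I cannot apply the continuous mapping theorem directly. Instead I would note that the functional $x \mapsto \int_{\R_+^k} f(x_{t_1},\dots,x_{t_k})\,\ee^{-(t_1+\dots+t_k)}\,\dd t_1\cdots\dd t_k$ \emph{is} well-defined on equivalence classes (it only integrates against Lebesgue measure) and is continuous on $\cM_{E,r}$: if $\dMZ(x^{(m)},x)\to 0$ then $r(x^{(m)}_t,x_t)\to 0$ in the finite measure $\ee^{-t}\dd t$, hence along a subsequence $r(x^{(m)}_{t_i},x_{t_i})\to 0$ for Lebesgue-a.e.\ $(t_1,\dots,t_k)$; since $f$ is bounded and continuous, dominated convergence gives convergence of the $k$-fold integral along that subsequence, and the usual subsequence argument upgrades this to the full sequence. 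Then \eqref{19} follows from the portmanteau theorem applied to this bounded continuous functional.

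For the reverse implication, assume the family $(X^n)$ is tight and that \eqref{19} converges for every $k$ and every $f\in C_b(E^k)$. By Prohorov's theorem (valid since $(\cM_{E,r},\dMZ)$ is Polish by \cite[Section~4]{K91}), tightness gives relative compactness, so every subsequence of $(X^n)$ has a further subsequence converging weakly to some limit. It suffices to show all subsequential limits coincide. Suppose $X^{n'}\Rightarrow X$ and $X^{n''}\Rightarrow Y$ along two subsequences. By the forward implication just proved, applied to each subsequence, the limit of \eqref{19} equals both $\int_{\R_+^k}\IE[f(X_{t_1},\dots,X_{t_k})]\,\ee^{-(t_1+\dots+t_k)}\,\dd t_1\cdots\dd t_k$ and the same expression with $Y$; hence these agree for all $k$ and all $f\in C_b(E^k)$. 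The remaining task is to conclude $X\stackrel{d}{=}Y$ as $\cM_{E,r}$-valued random variables from the equality of all such integral functionals.

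The main obstacle, and the heart of the proof, is this last identification step: showing that the collection of functionals $x\mapsto \int_{\R_+^k} f(x_{t_1},\dots,x_{t_k})\,\ee^{-(t_1+\dots+t_k)}\,\dd\bt$, ranging over $k\in\N$ and $f\in C_b(E^k)$, is \emph{measure-determining} on $\cM_{E,r}$. The strategy is a monotone-class / functional-uniqueness argument: the linear span of such functionals is an algebra (closed under products, by Fubini and by merging the time coordinates of two instances into a single higher-dimensional integral) that separates points of $\cM_{E,r}$ --- if $x\neq y$ as equivalence classes then $r(x_t,y_t)>0$ on a set of positive $\ee^{-t}\dd t$-measure, and choosing $f(u,v)=1\wedge r(u,v)$ with $k=2$ and one time-coordinate "tagged" appropriately (or more simply using that the pseudopath/occupation-measure picture of \cite{MZ84,K91} is faithful: the law of $x$ is determined by the law of the random measure $\mu_x(\dd t,\dd e)=\ee^{-t}\dd t\,\delta_{x_t}(\dd e)$ on $\R_+\times E$) distinguishes them. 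One then invokes the standard fact that an algebra of bounded continuous functions on a Polish space that separates points is measure-determining (via Stone--Weierstrass on compacts plus the tightness of the laws, which reduces everything to a compact subset of $\cM_{E,r}$). Concretely, I would phrase the final step through the pseudopath representation: \eqref{19} for all $k,f$ determines all mixed moments $\IE\bigl[\prod_{i=1}^k \langle \mu_{X^n}, g_i\rangle\bigr]$ for $g_i\in C_b(\R_+\times E)$ of product form $g_i(t,e)=\ee^{-t}f_i(e)$ --- and since such products are dense enough to be measure-determining for the law of the $\R_+\times E$-valued random measure $\mu_{X^n}$, and since $x\mapsto\mu_x$ is a Borel isomorphism onto its image by Kallenberg's analysis, the laws of $X$ and $Y$ on $\cM_{E,r}$ must coincide. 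I expect the technical care to go into verifying that the product-form test functions $\ee^{-t}f(e)$ (rather than general $g\in C_b(\R_+\times E)$) already suffice to pin down the law of the occupation measure --- this is where one uses that $\ee^{-t}$ never vanishes and a Stone--Weierstrass argument on the one-point compactification of $\R_+$ times a compact exhaustion of $E$.
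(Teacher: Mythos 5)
Note first that the paper gives no proof of this statement: it is quoted from Kurtz~(1991), Theorem~4.8, and invoked as an external result, so there is no in-paper argument to compare against. Turning to your proposal: the forward direction is sound, and the Prohorov/subsequence skeleton for the reverse direction has the right shape. But the step you flag at the end as requiring ``technical care'' --- whether product-form test functions $\ee^{-t}f(e)$, with the same fixed time-kernel $\ee^{-t}$ in every slot, already determine the pseudopath law on $\IR_+\times E$ --- is not a detail to verify; it is false, and it is precisely where the argument breaks. The map $(t_1,\dots,t_k)\mapsto(x_{t_1},\dots,x_{t_k})$ is the $k$-fold product of $\phi(t)=x_t$ with itself, so the pushforward of $\ee^{-t_1}\dd{t_1}\otimes\cdots\otimes\ee^{-t_k}\dd{t_k}$ is $\mu_x^{\otimes k}$, where $\mu_x=\phi_*(\ee^{-t}\dd{t})$ is the occupation measure on $E$ alone. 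Thus every functional in \eqref{19} depends on $x$ only through the single spatial measure $\mu_x$, and carries no information about \emph{when} the path is where. Concretely, for $E=\{a,b\}$ discrete, the path $x$ equal to $a$ on $[0,\ln 2)$ and $b$ thereafter, and the path $y$ equal to $b$ on $[0,\ln 2)$ and $a$ thereafter, have $\mu_x=\mu_y=\tfrac12\delta_a+\tfrac12\delta_b$ and hence identical values in \eqref{19} for every $k$ and $f$, yet $\dMZ(x,y)=1$. The deterministic sequence alternating between $x$ and $y$ is tight, has convergent \eqref{19}, and does not converge weakly --- so the algebra of functionals you build does not separate points of $\cM_{E,r}$, and no Stone--Weierstrass or pseudopath argument can repair that.

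What the Kurtz-style argument actually needs is genuine time information: integrate over the ordered simplex $\{0<t_1<\cdots<t_k\}$ rather than over $\IR_+^k$, or allow the decay rates $\lambda_i$ in $\ee^{-\lambda_i t_i}$ to vary with $i$, or let $f$ depend jointly on the $t_i$ and the $X_{t_i}$. Over the simplex the product factorisation above fails; setting one coordinate of $f$ to the constant $1$ and integrating out the corresponding time pulls out a factor $(1-\ee^{-t})$, so iteration recovers $\int_0^\infty\IE[h(X_t)]\,\ee^{-jt}\dd{t}$ for every $j\in\N$, hence the Laplace transform of $t\mapsto\IE[h(X_t)]$ and the finite-dimensional laws at Lebesgue-a.e.\ times --- at which point your subsequence argument closes. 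The display \eqref{19} as literally written, with a single exponential rate and an unordered $\IR_+^k$, is strictly weaker, and your proof inherits its insufficiency because it takes the display at face value.
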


While convergence of finite-dimensional distributions implies \eqref{19}, the converse is not true.

The following remark, while straightforward to prove, is useful to conceptualise the proof of Theorem~\ref{thm2}.

\begin{remark} \label{rem1}
Let $(X^n)_{n\in\N}$ and $(Y^n)_{n\in\N}$ be two sequences of $\cM_{E,r}$-valued processes, and let $Z$ be a $\cM_{E,r}$-valued processes.\\ 
$(a)$ If
\ben{\label{20}
\IE\cls{1\wedge r(X^n_t,Y^n_t)} \to 0 \qquad \text{uniformly in $t\in(0,T)$ for every $T>0$}, 
}
then 
\ben{\label{21}
  \lim_{n\to\infty} \IE[\dMZ(X^n,Y^n)] = 0.
}
$(b)$ If $Y^n$ converges weakly to $Z$ as $n\to\infty$ in $(\cM_{E,r},\dMZ)$ and \eqref{21} holds, then $X^n$ also converges weakly to $Z$ as $n\to\infty$ in $(\cM_{E,r},\dMZ)$.
\end{remark}


\subsection{Weak convergence of coloured graphon processes}
\label{sec9}

In this section, we use the setup of Section~\ref{sec8} to state and prove a proposition (Proposition~\ref{prop1}) that characterises weak convergence of coloured graphon-valued processes in the path topology of convergence in measure.

Since $(\tilde\cW,\dsub)$ is a metric space, we can define the path topology of convergence in measure on $\tilde\cW$-valued paths in the usual way (see Section~\ref{sec8} and \cite{K91}). We denote by $\cM_{\~\cW}$ the corresponding space of all measurable paths from $[0,\infty)$ to $\~\cW$, endowed with a metric $\dMZ$ that induces the path topology of convergence in measure (see Section~\ref{sec8}), which turns $\cM_{\~\cW}$ into a complete and separable metric space. We use ``$\Longrightarrow$'' to denote weak convergence on $\cM_{\~\cW}$ with respect to the Borel-sigma-algebra induced by $\dMZ$.

Let $(\tilde{\kappa},\tilde{c})$ be a $\tilde\cW$-valued stochastic process, that is, a random element of $\cM_{\~\cW}$. We will denote the value of $(\tilde{\kappa},\tilde{c})$ at time~$s \geq 0$ by  $(\tilde{\kappa}_s,\tilde{c}_s) \in \tilde\cW$, which is an equivalence class of coloured graphons. If $(\kappa_s,c_s)$ is a representative graphon of the equivalence class $(\tilde{\kappa}_s,\tilde{c}_s)$, then we write $\kappa_s(x,y)$ to denote the value of that graphon evaluated at the coordinates $(x,y)\in[0,1]^2$ and $c_s(x)$ to denote the value of the colour evaluated at $x \in [0,1]$. Note that, for a given $(\tilde{\kappa},\tilde{c}) \in \cM_{\~\cW}$ and a given finite coloured graph $F$, we can consider the induced real-valued process $t_F(\tilde\kappa, \tilde{c})$ as an element of $\cM_{[0,1]}$. Then $t_F(\tilde{\kappa}_s, \tilde{c}_s)$ equals the value of the process $t_F(\tilde{\kappa}, \tilde{c})$ at time~$s\geq 0$.

\begin{proposition}{\rm (Characterisation of convergence)} 
\label{prop1} 
Let $(\tilde{\kappa},\tilde{c})$ and $(\tilde{\kappa}^n, \tilde{c}^n)$, $n\in\N$, be $\cM_{\~\cW}$-valued processes. Then the following are equivalent:
\begin{enumerate}
  \item[(a)] $(\tilde{\kappa}^n, \tilde{c}^n)\Longrightarrow (\tilde{\kappa}, \tilde{c})$ as $n\to\infty$ in $\cM_{\~\cW}$.
  \item[(b)] For all $d \in \N$ and all coloured graphs $F_1,\dots,F_d$,
  \bmn{ \label{22}
    \bclr{t_{F_1}(\tilde{\kappa}^n, \tilde{c}^n),\dots,t_{F_d}(\tilde{\kappa}^n, \tilde{c}^n)}
    \Longrightarrow \bclr{t_{F_1}(\tilde{\kappa}, \tilde{c}),\dots,t_{F_d}(\tilde{\kappa}, \tilde{c})} \\
    \text{ in $\cM_{[0,1]^d}$  as $n\to\infty$}.
  }
\end{enumerate}
Moreover, if $(\tilde{\kappa},\tilde{c})$ and $(\tilde{\kappa}^n, \tilde{c}^n)$ are c\`adl\`ag, then the following condition implies both $(a)$ and~$(b)$:
\begin{enumerate}
  \item[(c)] For all $d \in \N$ and all coloured graphs $F_1,\dots,F_d$, the finite-dimensional distributions in the left-hand side of \eqref{22} converge to those in the right-hand side almost everywhere.
\end{enumerate}
\end{proposition}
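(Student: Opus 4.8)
The plan is to prove the equivalence of (a) and (b) directly, and then to deduce (c)$\,\Rightarrow\,$(b), which together with the equivalence also gives (c)$\,\Rightarrow\,$(a). Throughout I would use that, by Lemma~\ref{lem2}, $(\tilde\cW,\dsub)$ is a \emph{compact} metric space; consequently every subgraph density $t_F\colon\tilde\cW\to[0,1]$ is continuous (it is one of the summands defining $\dsub$), condition~$(i)$ of Theorem~\ref{thm3} holds automatically for $\tilde\cW$-valued processes, and $C_b(\tilde\cW^k)=C(\tilde\cW^k)$. For (a)$\,\Rightarrow\,$(b): the map $\Phi=(t_{F_1},\dots,t_{F_d})\colon\tilde\cW\to[0,1]^d$ is continuous between compact spaces, hence uniformly continuous, and a short estimate---if $\dsub(w,w')<\delta\in(0,1)$ forces $\norm{\Phi(w)-\Phi(w')}<\eta$, then $\dMZ(\Phi\circ x,\Phi\circ y)\le\eta+\delta^{-1}\dMZ(x,y)$---shows that post-composition by $\Phi$ is continuous from $(\cM_{\tilde\cW},\dMZ)$ to $(\cM_{[0,1]^d},\dMZ)$; then \eqref{22} follows from the continuous mapping theorem.

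For (b)$\,\Rightarrow\,$(a) I would verify the two hypotheses of Theorem~\ref{thm4}. \emph{Tightness}: condition~$(i)$ of Theorem~\ref{thm3} is free by compactness; for~$(ii)$, given $\eps>0$ choose $d$ with $\sum_{i>d}2^{-i}<\eps$ and use $1\wedge\dsub(w,w')\le\eps+\sum_{i\le d}2^{-i}\babs{t_{F_i}(w)-t_{F_i}(w')}$ to dominate the oscillation modulus of $(\tilde\kappa^n,\tilde c^n)$ by that of the $[0,1]^d$-valued process $\bclr{t_{F_1}(\tilde\kappa^n,\tilde c^n),\dots,t_{F_d}(\tilde\kappa^n,\tilde c^n)}$, which by~(b) is weakly convergent, hence tight, so Theorem~\ref{thm3}$(ii)$ applies to it. \emph{The integral criterion} \eqref{19}: by Lemma~\ref{lem3} and the Stone--Weierstrass theorem the linear span of the products $(w_1,\dots,w_k)\mapsto\prod_j t_{F_{i_j}}(w_j)$ is dense in $C(\tilde\cW^k)$; for such a product, \eqref{19} is the expectation of a bounded Lipschitz functional of the corresponding vector density process (Lipschitz by a telescoping estimate and $\int_0^\infty\ee^{-t}\dd t=1$), which converges by~(b) to the same quantity for $(\tilde\kappa,\tilde c)$; since $f\mapsto\int\IE\bcls{f(X^n_{t_1},\dots,X^n_{t_k})}\ee^{-(t_1+\dots+t_k)}\dd{t_1}\cdots\dd{t_k}$ has norm $\le1$ uniformly in $n$, this extends to all $f\in C(\tilde\cW^k)$. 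Theorem~\ref{thm4} then gives weak convergence in $\cM_{\tilde\cW}$, and the limit is $(\tilde\kappa,\tilde c)$ because these functionals are law-determining on $\cM_{\tilde\cW}$.

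For (c)$\,\Rightarrow\,$(b), write $Y^n$ and $Y$ for the c\`adl\`ag $[0,1]^d$-valued processes $\bclr{t_{F_1}(\tilde\kappa^n,\tilde c^n),\dots,t_{F_d}(\tilde\kappa^n,\tilde c^n)}$ and $\bclr{t_{F_1}(\tilde\kappa,\tilde c),\dots,t_{F_d}(\tilde\kappa,\tilde c)}$. By~(c) and dominated convergence, \eqref{19} holds for the $Y^n$ and converges to the corresponding quantity for $Y$; the remaining ingredient is tightness, which I would obtain via the occupation-measure (pseudopath) picture. The random measures $\Xi^n=\int_0^\infty\delta_{(t,Y^n_t)}\ee^{-t}\dd t$ all take values in the \emph{compact} set of probability measures on $\R_+\times[0,1]^d$ with first marginal $\ee^{-t}\dd t$, so their laws are tight; along any subsequence $\Xi^{n_k}\Rightarrow\Xi$, and, disintegrating $\Xi(\dd t,\dd x)=\ee^{-t}\mu_t(\dd x)\dd t$, the products $\IE\bcls{\prod_j\langle\mu_{t_j},\psi_j\rangle}$ match the corresponding finite-dimensional distributions of $Y$ for a.e.\ $(t_1,\dots,t_m)$ (the product structure of $\Xi^n$ passes to the limit). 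Taking $m=1$ with $\psi^2$ gives $\IE\bcls{\langle\mu_t,\psi^2\rangle}=\IE\bcls{\psi^2(Y_t)}$ for a.e.\ $t$, and $m=2$ with $\psi_1=\psi_2=\psi$---after mollifying to reach the Lebesgue-null diagonal $\{t_1=t_2\}$ using right-continuity of $Y$---gives $\IE\bcls{\langle\mu_t,\psi\rangle^2}=\IE\bcls{\psi^2(Y_t)}$ for a.e.\ $t$; since $\langle\mu_t,\psi^2\rangle-\langle\mu_t,\psi\rangle^2\ge0$, this forces $\mu_t=\delta_{Z_t}$ a.s., and the $m$-dimensional identities then give $Z\stackrel{\mathrm{d}}{=}Y$ up to a.e.\ equality, so $\Xi\stackrel{\mathrm{d}}{=}\int_0^\infty\delta_{(t,Y_t)}\ee^{-t}\dd t$. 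Hence every subsequential limit of $\Xi^n$ is the pseudopath of $Y$, so $Y^n\Rightarrow Y$ in the pseudopath topology; since $Y$ is a genuine path, this is weak convergence in $(\cM_{[0,1]^d},\dMZ)$, which is~(b), and (a) follows from the equivalence.

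I expect the tightness step in (c)$\,\Rightarrow\,$(b) to be the main obstacle: almost-everywhere convergence of finite-dimensional distributions alone does not control the oscillation modulus in Theorem~\ref{thm3}$(ii)$ (a fast-oscillating c\`adl\`ag sequence can have convergent finite-dimensional distributions whose ``relaxed'' limit is not an honest path), so the hypothesis that the limit is c\`adl\`ag must be used essentially---here through the variance collapse of the relaxed limit, which in turn requires the mollification around the null diagonal and the (standard but nontrivial) identification of Kurtz's metric topology on $\cM_{E,r}$ with the restriction to honest paths of the pseudopath topology. The remaining ingredients---the truncation of the defining series of $\dsub$ in the tightness part of (b)$\,\Rightarrow\,$(a), and the Stone--Weierstrass density argument for the integral criterion built on Lemma~\ref{lem3}---are routine.
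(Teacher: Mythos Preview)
Your treatment of (a)$\Leftrightarrow$(b) is correct and coincides with the paper's: continuous mapping for (a)$\Rightarrow$(b), and for (b)$\Rightarrow$(a) tightness via truncation of the series defining $\dsub$ together with the integral criterion of Theorem~\ref{thm4} and the density statement of Lemma~\ref{lem3}.

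For (c)$\Rightarrow$(b) the paper does not argue from first principles: it simply invokes \cite[Corollary~2.2]{BM16} (building on \cite{CK86} and \cite{G76}), which asserts that for c\`adl\`ag processes almost-everywhere convergence of finite-dimensional distributions already yields convergence in the Meyer--Zheng topology; this immediately gives tightness of each $t_{F_i}(\tilde\kappa^n,\tilde c^n)$, hence of the finite vector, and Theorem~\ref{thm4} closes. You instead sketch a proof of that cited result via the pseudopath picture, which is a legitimate and more self-contained route, but your variance-collapse step has a gap as written. Right-continuity of $Y$ controls only one side of the mollified identity: it gives $\IE\bigl[(\int\psi(Y_{t+s})\phi_\eps(s)\,\dd s)^2\bigr]\to\IE[\psi(Y_t)^2]$, but says nothing about the $\mu$-side, where $t\mapsto\mu_t$ has no regularity beyond measurability and the diagonal $\{t_1=t_2\}$ is genuinely null. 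Two clean fixes: (i) supplement with one-sided Lebesgue differentiation---for a.e.\ $(\omega,t)$ one has $\eps^{-1}\int_0^\eps\langle\mu_{t+s},\psi\rangle\,\dd s\to\langle\mu_t,\psi\rangle$, and bounded convergence then gives $\IE[\langle\mu_t,\psi\rangle^2]=\IE[\psi(Y_t)^2]$ for a.e.\ $t$; or (ii) bypass the variance collapse entirely: you have already shown that $\IE\bigl[\prod_j\int f_j\,\dd\Xi\bigr]=\IE\bigl[\prod_j\int f_j\,\dd\hat\Xi\bigr]$ for all $f_j\in C_b$, where $\hat\Xi=\int_0^\infty\delta_{(t,Y_t)}\ee^{-t}\,\dd t$, and since the random variables $\int f\,\dd\Xi$ are bounded, matching all mixed moments determines the joint law, so $\Xi\stackrel{\mathrm d}{=}\hat\Xi$ directly without ever touching the diagonal.
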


\begin{proof}
\textit{$(a)\Rightarrow (b)$}: Let $f\colon \~\cW \to \IR^d$ be Lipschitz continuous. Then, for any sequence $x_1,x_2,\ldots\in\cM_{\~\cW}$ that converges to $x\in\cM_{\~\cW}$, we have
\be{
  \int_0^\infty \bclr{1\wedge \norm{f(x_n(t))-f(x(t))}}\ee^{-t}\dd t
  \leq C_f \int_0^\infty \dsub\bclr{x_n(t),x(t)}\ee^{-t}\dd t
}
for some $C_f>0$. This implies that the function that maps a path $y\in\cM_{\~\cW}$ to the path $t\to f(y(t))$, which is an element of $\cM_{\IR^d}$, is Lipschitz continuous. The claim now follows by the continuous mapping theorem, since coloured subgraph densities are Lipschitz functions on $\~\cW$.

\medskip\noindent\textit{$(b)\Rightarrow (a)$}: Write $X^n_t = (\~\kappa_t^n,\~c_t^n)$. We have
\be{
  \int_0^\infty \IE[\dsub\bclr{X^n_t,X^n_{t+h}}] \ee^{-t}\dd t
  = \sum_{i \in \N} 2^{-i} \int_0^\infty \IE[\abs{t_{F_i}(X^n_t)-t_{F_i}(X^n_{t+h})}] \ee^{-t}\dd t.
}
Since weak convergence implies relative compacteness, \eqref{22} implies 
\be{
  \lim_{h\downarrow 0} \sup_{n \in \N} \int_0^\infty \IE[\abs{t_{F_i}(X^n_t)-t_{F_i}(X^n_{t+h})}]\ee^{-t}\dd t = 0
}
for every $F_i$. Next, fix $\eps>0$ and let $M$ be such that $2^M\leq \eps/2$. For each $1\leq i\leq M$, choose $h_i$ such that
\be{
  2^{-i}\sup_{n \in \N} \int_0^\infty \IE[\abs{t_{F_i}(X^n_t)-t_{F_i}(X^n_{t+h})}]\ee^{-t}\dd t 
  \leq \frac{\eps}{2M} \qquad \text{ for all $0<h\leq h_i$.}
}
Letting $h_0 = \min\{h_1,\ldots,h_M\}$, we have
\bes{
  \mel \sup_{n \in \N} \sum_{i \in \N} 2^{-i} \int_0^\infty \IE[\abs{t_{F_i}(X^n_t)-t_{F_i}(X^n_{t+h})}]\ee^{-t}\dd t\\
    & \leq \sum_{i=1}^M 2^{-i} \sup_{n \in \N} \int_0^\infty \IE[\abs{t_{F_i}(X^n_t)-t_{F_i}(X^n_{t+h})}]\ee^{-t}\dd t 
    + \frac{\eps}{2}  \leq \eps
}
for all $0<h\leq h_0$. Since $\~\cW$ is compact, it follows from Theorem~\ref{thm3}, that the sequence $(X^n)_{n\in\N}$ is tight. For any $F_1,\dots, F_d$, let $\bt^n_s = \bclr{t_{F_1}(X^n_{s}),\dots,t_{F_d}(X^n_{s})}$, and write $\bbs = (s_1,\dots,s_k)$. Now, \eqref{22} and Theorem~\ref{thm4} imply that
\be{
  \int_{\IR_+^k} \IE[f(\bt^n_{s_1},\dots,\bt^n_{s_k})]\ee^{-(s_1+\dots+s_k)}\dd \bbs
  \longto 
  \int_{\IR_+^k} \IE[f(\bt_{s_1},\dots,\bt_{s_k})]\ee^{-(s_1+\dots+s_k)}\dd \bbs
}
for every $k\in\N$ and every $f\in C_b([0,1]^{d\times k})$, in particular, for functions of the form
\be{
  f(t_1,\dots,t_k) = \prod_{i=1}^k \bbclr{\sum_{j=1}^d a_{i,j}t_{i,j}}, \qquad t_1,\dots,t_k\in[0,1]^d, a_{1,1},\dots a_{k,d}\in\IR.
}
Since the linear span of subgraph densities is dense in $C_b(\~\cW)$ with respect to the uniform norm by Lemma~\ref{lem3}, it follows that
\ben{\label{23}
  \int_{\IR_+^k} \IE[f(X^n_{s_1},\dots,X^n_{s_k})]\ee^{-(s_1+\dots+s_k)}\dd \bbs
  \longto 
  \int_{\IR_+^k} \IE[f(X_{s_1},\dots,X_{s_k})]\ee^{-(s_1+\dots+s_k)}\dd \bbs
}
for every $k\in\N$ and every $f\in C_b(\~\cW^{k})$. Thus, Theorem~\ref{thm4} implies $(a)$.

\medskip\noindent\textit{$(c)\Rightarrow (b)$}: For each $F_i$, the process $t_{F_i}(X^n)$ is a c\`adl\`ag. The limit $t_{F_i}(X)$ is continuous, hence, c\`adl\`ag. Since the finite-dimensional distributions of $t_{F_i}(X^n)$ converge to those of $t_{F_i}(X)$ almost everywhere, it follows from \cite[Corollary~2.2]{BM16} (see also \cite{CK86} and \cite{G76}) that $t_{F_i}(X^n)\Longrightarrow t_{F_i}(X)$ in~$\cM_{[0,1]}$. This implies, in particular, that $t_{F_i}(X^n)$ is tight. Hence, the finite family $(t_{F_i}(X^n))_{1\leq i\leq d}$ is tight. The claim now follows from the assumptions and Theorem~\ref{thm4}, since convergence of finite-dimensional distributions almost everywhere can be shown to imply \eqref{23}.
\end{proof}

We conclude this section with some remarks on Proposition~\ref{prop1}.

\begin{remark}
\begin{enumerate}
\item[$(i)$] 
If $(\~\kappa^n,\~c^n)_{n\in\N}$ are induced by a sequence of coloured graph processes, then $t_{F_i}$ in the left-hand side of \eqref{22} can be replaced by $t^{\inj}_{F_i}$. 
\item[$(ii)$] 
The condition that all processes involved be c\`adl\`ag can be replaced by the condition that the processes take values in any family of paths for which almost-everywhere equality implies pointwise equality, such as continuous paths or c\`agl\`ad paths; see \cite{BM16}.
\item[$(iii)$] 
Condition $(c)$ is sufficient for weak convergence, but not necessary. One can easily construct examples of sequences of processes converging weakly in this path topology, but for which the finite-dimensional distributions converge nowhere. From convergence in this path topology, one can only conclude that the finite-dimensional distributions converge along a subsequence almost everywhere. 
\item[$(iv)$] 
It is possible to replace convergence of the finite-dimensional distributions by an integral version of finite-dimensional distributions in order for $(c)$ to be equivalent to $(a)$ and $(b)$ (see  Theorem~\ref{thm4}, \cite[Theorem~4.8]{K91}, and \cite[Theorem~2.6]{BM16}). We will not need this generality though, since we are only interested in the conclusion that $(c)$ implies $(b)$ and thus $(a)$.
\end{enumerate}
\end{remark}


\section{Action of generator on coloured subgraph densities and its projection}
\label{sec10}

The results in this section will be needed for the proof of the main results in Section~\ref{sec14}. All computations are done in terms of coloured subgraph densities of $G^n$, which is the appropriate language in view of the equivalence stated in Proposition~\ref{prop1}. In Section~\ref{sec11} we explain how to use multi-sets to simplify notation. In Section~\ref{sec12} using Taylor approximation in \eqref{1}, we derive the evolution of the coloured subgraph densities of $G^n$ under $\cA_n$. This provides an approximation up to order $1/n$ and helps to identify the subset $\cW_0$ onto which the process collapses (recall that $\cW_0$ is the subset of graphons for which the colouring is constant). The full derivation of the approximation is lengthy and technical, and therefore we present the main steps only; further details can be found in Appendix~\ref{sec17}. In Section~\ref{sec13} we compute the evolution of the projection $\cP$ of $G^n$ onto~$\cW_0$.


\subsection{Quick primer on signed multisets}
\label{sec11}

In the following, we will use multisets to describe the action of the generator $\cA_n$ on coloured subgraph densities. This is done mainly for the purpose of simplifying notation. 

A multiset is a set where each element can appear more than once (multiplicity). A signed multiset is a multiset where the multiplicity of each element can be any integer. For two signed multisets $\cA$ and $\cB$, the sum $\cA+\cB$ denotes the signed multiset where the multiplicity of each element is the sum of the multiplicities of the respective multisets. Similarly, the difference $\cA-\cB$ denotes the signed multiset where the multiplicity of each element is the difference of the multiplicities of the respective multisets. We use $\sum$ to denote multiset summation. 

For a multiset $\cA$, sums indexed by $\cA$ are understood to repeat each element of the multiset as many times as its multiplicity. For instance, if $\cA = \{a,a,b\}$, then $\sum_{i\in\cA} x_i = x_a + x_a + x_b$. 

If $\cA$ is a signed multiset, then sums indexed by $\cA$ are understood to repeat each element of the multiset as many times as its multiplicity for positive multiplicities, and to subtract each element of the multiset as many times as its multiplicity for negative multiplicities. For instance, if $\cA$ is a signed multiset containing elements $a$, $b$, $c$, $d$ with multiplicities $2$, $1$, $-1$, $-2$, respectively, then $\sum_{i\in\cA} x_i = x_a + x_a + x_b - x_c - x_d - x_d$.


\subsection{Evolution of coloured subgraph densities in the finite coloured graph process}
\label{sec12}

In this section we set up some notation and describe the action of the generator on injective homomorphisms in Lemma~\ref{lem5}. Our key result is Lemma~\ref{lem6}, where we derive an approximation of the action of the generator on subgraph densities up to order $\frac{1}{n}$. To do so is a technical challenge, with a fair amount of book-keeping and tedious computations, and requires a substantial amount of notation.

To state Lemma~\ref{lem5}, we need to invoke a number of graph operations acting on finite coloured graphs. Let $F$ be a coloured graph on vertices $[k]$, and define the operations as explained in Table~\ref{tab1}. Take note that operations can be applied in sequence, read from left to right. For example, if $F$ consists of two coloured vertices labelled $1$ and $2$, connected by an edge, then $F(+ 3, c_3\gets c^F_1, c_1 \gets 1-c_1^F, 1\sim 3)$ is the graph obtained from $F$ by adding a third vertex, setting its colour to that of the first vertex, flipping the colour of the first vertex, and connecting it to the third vertex.

\begin{table}[t]
\footnotesize\centering
\begin{tabular}{ll}
\toprule
\bf Notation & \bf Action \\
\midrule
$F(c_a\gets 0)$ for $a\in[k]$ & set the colour of vertex $a$ to 0 \\[0.5ex]
$F(c_a\gets 1)$ for $a\in[k]$ & set the colour of vertex $a$ to 1 \\[0.5ex]
$F(+a)$ for $a\notin[k]$ & add a new vertex labelled $a$ \\[0.5ex]
$F(a\sim b)$ for $a,b\in[k]$ & add an edge between vertices $a$ and $b$ \\ 
&(has no effect when an edge is already present)\\[0.5ex]
$F(a\not\sim b)$ for $a,b\in[k]$ &remove an edge between vertices $a$ and $b$ \\
&(has no effect when an edge is already absent)\\[0.5ex]
$F(a = b)$ for $a,b\in[k]$ & merge vertices $a$ and $b$, set label of merged vertex to $a$,\\
& set its colour to that of vertex $a$, relabel \\
& the remaining vertices from $a+1$ to $k-1$\\[0.5ex]
$ F\smash{\displaystyle\merge{a=b}} F'$ for $a\in [k]$, $b\in [k']$ & merge vertices $a$ and $b$, set label of merged vertex to $a$,\\
& set its colour to that of vertex $a$, relabel \\
& the remaining vertices of $F'$ from $k$ to $k+k'-1$\\
\bottomrule
\end{tabular}
\caption{Graph operations acting on a coloured graph $F$ with vertex set $[k]$ and a coloured graph $F'$ with vertex set $[k']$.}
\label{tab1}
\end{table}

Based on these graph operations, we introduce a number of multisets. First, for a coloured graph $F$ on the vertex set $[k]$, define the sets $\cS_+(F)$ and $\cS_-(F)$ as in Table~\ref{tab2}. In words, the set $\cS_+(F)$ is obtained by going through each vertex, adding a new vertex of the same colour and attaching an edge between the two, and afterwards flipping the colour of the original vertex, while the set $\cS_-(F)$ is obtained by going through each vertex and attaching a new vertex of the opposite colour to it. Moreover, let the signed multiset $\cS(F)$ be the difference of the two sets.

Next, define the multisets $\cS^\circ_+(F)$ and $\cS^\circ_-(F)$ as in Table~\ref{tab2}. The multisets $\cS^\circ_+(F)$ and $\cS^\circ_-(F)$ are obtained by going through each vertex and then going through each other vertex of the same colour (in the case of $\cS^\circ_+(F)$) or of the opposite colour (in the case of $\cS^\circ_-(F))$ and adding an edge between the two. For $\cS^\circ_+(F)$, the first vertex also changes colour. If edges are already present, then the operation has no effect, but it adds to the multiplicity of the respective graph. Moreover, let the signed multiset $\cS^\circ(F)$ be the difference of the two sets.

\begin{table}[!t]
\footnotesize\centering
\begin{tabular}{l}
\toprule
\bf  Multisets related to drift coefficients \\
\midrule
  $\displaystyle\cS_+(F)  = \sum_{p\in V(F)}\bclc{F\bclr{+ (k+1),c_{k+1}\gets c_p^F, p\sim k+1, c_p\gets1-c_p^F}}$ \\[3ex]
  $\displaystyle\cS_-(F)  = \sum_{p\in V(F)}\bclc{F\bclr{+ (k+1),c_{k+1}\gets1-c_p^F,p\sim k+1}}$\\[4ex]
  $\cS(F) = \cS_+(F) - \cS_-(F)$\\
  \midrule
  $\displaystyle \cS^\circ_+(F) = \sum_{\substack{p\in V(F)}} \sum_{\substack{q\in V(F)\colon\\ q\neq p, c_q^F=c_p^F}} \bclc{F(c_p\gets1-c_p^F,p\sim q)}$  \\[6ex]
  $\displaystyle \cS^\circ_-(F)  = \sum_{\substack{p\in V(F)}} \sum_{\substack{q\in V(F)\colon\\ q\neq p, c_q^F\neq c_p^F}}\bclc{F(p\sim q)}.$\\[6.5ex]
   $\cS^\circ(F) = \cS^\circ_+(F) - \cS^\circ_-(F)$\\
  \midrule
$\displaystyle \cS^\diamond_+(F)  = \sum_{a\in V(F)}\sum_{\substack{b\in V(F)\colon\\b\neq a, a\stackrel{F}{\not\sim}b, c^{F}_a=c^{F}_b}} \bclc{F\bclr{a=b,+ k,c_{k}\gets c_a^F,c_a\gets 1-c_a^F,a\sim k}}$ \\[6.5ex]
$\displaystyle  \cS^\diamond_-(F)  = \sum_{a\in V(F)}\sum_{\substack{b\in V(F)\colon\\b\neq a, a\stackrel{F}{\not\sim}b, c^{F}_a\neq c^{F}_b}} \bclc{F\bclr{a=b,+ k,c_k\gets c_a^F,c_a\gets 1-c_a^F,a\sim k}}$\\[7ex]
$\cS^\diamond(F) = \cS^\diamond_+(F) - \cS^\diamond_-(F)$\\
\bottomrule
\end{tabular}
\caption{\label{tab2}The various (signed) multisets required in the description of the drift component of subgraph densities.}
\end{table}

Finally, let the multisets $\cT_{\scriptscriptstyle=,+}(F,F')$, $\cT_{\scriptscriptstyle\neq,+}(F,F')$, $\cT_{\scriptscriptstyle\neq,-}(F,F')$, and $\cT_{\scriptscriptstyle=,-}(F,F')$ be as in Table~\ref{tab3}. These sets are obtained by merging one vertex of $F$ with one vertex of $F'$, and then connecting the merged vertex to a newly added vertex, with four possible combinations, depending on whether the colour of the merged vertices match or not and depending on the colour of the newly added vertex. Note that, while $\cT_{\scriptscriptstyle\neq,\cdot}(F,F') \neq \cT_{\scriptscriptstyle\neq,\cdot}(F',F)$ because the graphs are labelled and the order of $F$ and $F'$ matters in the construction of $F\merge{a=b}F'$ when the colours of $a$ and $b$ are different, we nonetheless have $\cT_{\scriptscriptstyle\neq,+}(F,F') + \cT_{\scriptscriptstyle\neq,-}(F,F') = \cT_{\scriptscriptstyle\neq,+}(F',F) + \cT_{\scriptscriptstyle\neq,-}(F',F)$. 

\begin{lemma}[Action of generator on injective subgraph densities]
\label{lem5}
For some $d\geq 1$, let $F_1,\dots,F_d$ be coloured graphs. Then, for $G$ a coloured graph on $n$ vertices,
\besn{
\label{24}
(\cA_n(f\circ \bt^{\inj}))(G)
& = \sum_{m=1}^d\bclr{\eta\, \~\mu_{F_m}^{\mathrm{v}}(G)+\rho\, \~\mu_{F_m}^{\mathrm{e}}(G)}f_m\bclr{\bt^{\inj}(G)} \\
&\qquad + \frac12\sum_{m,m'=1}^d \eta\, \~\sigma_{F_m,F_{m'}}^{\mathrm{v}}(G)f_{m,m'}\bclr{\bt^{\inj}(G)} + \bigo(n^{-1}),       
}
where $\~\mu_{F}^{\mathrm{v}}(G)$, $\~\mu_{F}^{\mathrm{e}}(G)$, and $\~\sigma_{F,F'}^{\mathrm{v}}(G)$ are defined in Table~\ref{tab4}. The constant in $\bigo(n^{-1})$ depends only on $F_1,\dots,F_d$, $f$ and on the model parameters.
\end{lemma}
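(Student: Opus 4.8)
The plan is to compute the action of $\cA_n$ on a composite observable $f\circ\bt^{\inj}$ by reducing everything, via Taylor expansion, to the action on individual injective subgraph densities $t^{\inj}_{F_m}$ and their pairwise products, and then to identify the drift and diffusion coefficients combinatorially. First I would write, for a single jump of the chain (a vertex colour flip $G\mapsto G^u$ or an edge switch $G\mapsto G^{uv}$), the increment $t^{\inj}_{F_m}(G^u)-t^{\inj}_{F_m}(G)$. Because $t^{\inj}_{F_m}(G)=|\inj(F_m,G)|/(n)_k$ is an average over injective maps $\phi\colon V(F_m)\hookrightarrow[n]$ of the indicator that $\phi$ is colour- and edge-preserving, a single-site or single-edge update only affects those maps whose image contains $u$ (resp.\ $\{u,v\}$). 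Summing the generator's rates against these increments, the leading contribution comes from maps using $u$ once (giving an $\bigo(1)$ term after multiplying by the $\bigo(n)$-many choices of $u$ and the rate, which itself is a sum over neighbours, i.e.\ another factor that is $\bigo(n)$ against the $1/(n)_k$ normalisation), while maps that use $u$ together with a second vertex that gets identified with it contribute the lower-order $\bigo(n^{-1})$ piece. This is exactly where the graph operations in Tables~\ref{tab1}--\ref{tab3} enter: flipping $c_u$ and recording which subgraph densities are created corresponds to the ``$+\,(k+1)$, $p\sim k+1$, $c_p\gets 1-c_p^F$'' operations defining $\cS_\pm(F)$, $\cS^\circ_\pm(F)$, $\cS^\diamond_\pm(F)$, and the edge-switch terms similarly.

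Next I would organise the bookkeeping. For the vertex part of $\cA_n$: the rate for flipping $u$ is $\eta\sum_{v\neq u} e_{uv}\^c_{uv}$, so plugging a test map $\phi$ that uses $u$ at vertex $p\in V(F_m)$, the sum over $u$ and over the ``extra'' neighbour $v$ naturally produces, after normalising, the subgraph density of the graph $F_m$ with a pendant vertex of matching colour attached at $p$ and the colour at $p$ flipped — one must split according to whether $v$ is already in the image of $\phi$ (yielding the $\cS^\circ$ and $\cS^\diamond$ terms at order $1$ and $1/n$ respectively) or fresh (yielding $\cS$). The difference of the ``creation'' and ``destruction'' contributions gives the signed multisets $\cS(F)$, $\cS^\circ(F)$, $\cS^\diamond(F)$. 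For the edge part: the rate $\rho\, r^{\mathrm e}_{u,v}(G)$ is already a quadratic-in-colour expression, and the increment $t^{\inj}_{F_m}(G^{uv})-t^{\inj}_{F_m}(G)$ only involves maps using both $u$ and $v$, so this term is automatically $\bigo(1)$ with no $n$-rescaling subtlety; it produces the $\~\mu^{\mathrm e}_{F_m}$ coefficient. For the second-order (diffusion) term I would Taylor-expand $f(\bt^{\inj}(G^u))-f(\bt^{\inj}(G)) = \sum_m f_m\,\Delta_m + \tfrac12\sum_{m,m'} f_{m,m'}\,\Delta_m\Delta_{m'} + \bigo(\|\Delta\|^3)$ where $\Delta_m = t^{\inj}_{F_m}(G^u)-t^{\inj}_{F_m}(G)$; here $\Delta_m=\bigo(1/n)$ for a single vertex flip, so $\Delta_m\Delta_{m'}=\bigo(1/n^2)$ pointwise but there are $\bigo(n)$ vertices $u$ and rate $\bigo(n)$, giving a net $\bigo(1)$ quadratic-variation term, while the cubic remainder is $\bigo(n^{-1})$. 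The edge flips contribute $\Delta_m=\bigo(1/n^2)$ and hence only to the error. This is how $\eta\,\~\sigma^{\mathrm v}_{F_m,F_{m'}}$ arises, and why there is no edge contribution to the diffusion coefficient.

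I would then assemble these pieces: collect the first-order terms into $\eta\,\~\mu^{\mathrm v}_{F_m}(G) + \rho\,\~\mu^{\mathrm e}_{F_m}(G)$ (with $\~\mu^{\mathrm v}_{F_m}$ a sum of subgraph densities over the multisets $\cS$, $\cS^\circ$, $\cS^\diamond$ with appropriate powers of $n$, and $\~\mu^{\mathrm e}_{F_m}$ over the $\cT$-multisets weighted by the $s$-parameters), collect the second-order vertex terms into $\tfrac12\sum_{m,m'}\eta\,\~\sigma^{\mathrm v}_{F_m,F_{m'}}(G)$, and absorb everything else into $\bigo(n^{-1})$ with a constant depending only on $F_1,\dots,F_d$, $f$ (through bounds on $f_m$, $f_{m,m'}$, and the third derivatives of $f$ on the compact cube $[0,1]^d$), and the fixed model parameters. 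The main obstacle is the combinatorial accounting in the first-order vertex term: carefully separating, for each pair $(u,v)$ appearing in the flip rate, the three cases where both $u,v$ lie in the image of a test homomorphism, only $u$ does, or neither — keeping track of which vertex of $F_m$ is hit, whether an edge is created afresh or was already present (the ``has no effect but adds to multiplicity'' clauses in Table~\ref{tab1}), and how colours are reassigned after a flip or a merge — and checking that the $n$-power normalisations line up so that exactly the claimed terms survive at order $1$ and the merge/identification terms land at order $1/n$. This is the lengthy verification deferred to Appendix~\ref{sec17}; here I would present the expansion scheme, state precisely the decomposition of each increment, and verify the order-of-magnitude claims, referring to the appendix for the full term-by-term matching with Table~\ref{tab4}.
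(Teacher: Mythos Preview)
Your approach is essentially the paper's: bound the single-jump increments $\Delta_u t^{\inj}_{F_m}(G)=\bigo(n^{-1})$ and $\Delta_{uv} t^{\inj}_{F_m}(G)=\bigo(n^{-2})$, Taylor-expand $f$ to second order for vertex jumps and first order for edge jumps, collect the remainders into $\bigo(n^{-1})$, and then identify the resulting sums combinatorially with the multisets in Tables~\ref{tab2}--\ref{tab3}. Two small corrections to your bookkeeping: in Lemma~\ref{lem5} the coefficient $\~\mu^{\mathrm v}_{F}$ involves only $\cS(F)$ and $\cS^\circ(F)$ (the $\cS^\diamond$ terms enter only in Lemma~\ref{lem6}, when passing from injective to regular densities), and the $\cT$-multisets belong to the diffusion coefficient $\~\sigma^{\mathrm v}_{F,F'}$, not to the edge drift --- $\~\mu^{\mathrm e}_{F}$ is expressed directly via $t^{\inj}_F$ and $t^{\inj}_{F(r\not\sim s)}$ with the $s$-parameters, as in Table~\ref{tab4}.
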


\begin{table}[t]
\footnotesize\centering
\begin{tabular}{l}
\toprule
\bf  Multisets related to diffusion coefficients \\
\midrule
$\displaystyle \cT_{\scriptscriptstyle=,+}(F,F') = \sum_{a\in V(F)}\sum_{\substack{b\in V(F')\colon\\ c^{F}_a=c^{F'}_b}}  \bbclc{\bclr{F\merge{a=b}F'}\bclr{+ (k+k'),c_{k+k'}\gets c_a^F,c_a\gets 1-c_a^F,a\sim k+k'}}$  \\
$\displaystyle \cT_{\scriptscriptstyle=,-}(F,F')= \sum_{a\in V(F)}\sum_{\substack{b\in V(F')\colon\\ c^{F}_a= c^{F'}_b}}  \bbclc{\bclr{F\merge{a=b}F'}(+ (k+k'),c_{k+k'}\gets1-c_a^F,a\sim k+k')}.$\\
$\displaystyle \cT_{\scriptscriptstyle\neq,+}(F,F') = \sum_{a\in V(F)}\sum_{\substack{b\in V(F')\colon\\ c^{F}_a\neq c^{F'}_b}}  \bbclc{\bclr{F\merge{a=b}F'}(+ (k+k'),c_{k+k'}\gets c_a^F,c_a\gets 1-c_a^F,a\sim k+k')}$  \\
$\displaystyle \cT_{\scriptscriptstyle\neq,-}(F,F')= \sum_{a\in V(F)}\sum_{\substack{b\in V(F')\colon\\ c^{F}_a\neq c^{F'}_b}}  \bbclc{\bclr{F\merge{a=b}F'}(+ (k+k'),c_{k+k'}\gets1-c_a^F,a\sim k+k')}$ \\[7ex]
$\cT(F,F') = \cT_{\scriptscriptstyle=,+}(F,F') + \cT_{\scriptscriptstyle=,-}(F,F') - \cT_{\scriptscriptstyle\neq,+}(F,F') - \cT_{\scriptscriptstyle\neq,-}(F,F')$\\
\bottomrule
\end{tabular}
\caption{\label{tab3}The various (signed) multisets required in the description of the diffusion component of subgraph densities.}
\end{table}

\begin{table}[t]
\footnotesize\centering
\begin{tabular}{l}
\toprule
\bf  Drift and diffusion coefficients   \\
\midrule
$\displaystyle\~\mu_{F}^{\mathrm{v}}(G) = (n-k) \sum_{H\in\cS(F)} t^{\inj}_H(G) 
+ \sum_{H\in\cS^\circ(F)} t^{\inj}_H(G)$ \\[4ex]
$\displaystyle\~\mu_{F}^{\mathrm{e}}(G) = \sum_{\{r,s\}\in E(F)} \bbcls{\bclr{s_{\mathrm{c},0}c^F_{rs} 
+ s_{\mathrm{d},0}\^c^F_{rs}}  \bclr{t^{\inj}_{F(r\not\sim s)}(G)-t^{\inj}_F(G)}-\bclr{s_{\mathrm{c},1}c^F_{rs} + s_{\mathrm{d},1}\^c^F_{rs}} t^{\inj}_F(G)}$\\[4ex]
$\displaystyle\~\sigma_{F,F'}^{\mathrm{v}}(G) = \sum_{H\in\cT(F,F')} t^{\inj}_H(G)$\\
\midrule
$\displaystyle\mu_{F}^{\mathrm{v}}(G)= (n-k)\sum_{H\in\cS(F)} t_H(G)  
+ \sum_{H\in\cS^\circ(F)} t_H(G) + \sum_{H\in\cS^\diamond(F)} t_H(G)$,\\[4ex]
$\displaystyle \mu_{F}^{\mathrm{e}}(G) = \sum_{\{r,s\}\in E(F)} \bbcls{\bclr{s_{\mathrm{c},0}c^F_{rs} 
+ s_{\mathrm{d},0}\^c^F_{rs}}  \bclr{t_{F(r\not\sim s)}(G)-t_F(G)}-\bclr{s_{\mathrm{c},1}c^F_{rs} + s_{\mathrm{d},1}\^c^F_{rs}} t_F(G)}$\\[4ex]
$\displaystyle\sigma_{F,F'}^{\mathrm{v}}(G) = \sum_{H\in\cT(F,F')} t_H(G)$\\
\bottomrule
\end{tabular}
\caption{\label{tab4} The drift and diffusion coefficients for coloured subgraph densities. The top half is for injective subgraph densities, the bottom half for regular subgraph densities.}
\end{table}

\begin{proof}
Indices $i$ and $j$ refer to vertices of the subgraphs $F_1,\dots,F_d$, and indices $u$, $v$, $u_1,\dots,u_k$ refer to vertices of the graph $G$. Sums of the form $\tsump_{u_1,\dots,u_k}$ represent a summation over all $k$-tuples with pairwise distinct entries (of which there are $(n)_k=n(n-1)\times\cdots\times(n-k+1)$ many). Recall the definition of $\cA_n$, and rewrite $r^{\mathrm{v}}_u$ and $r^{\mathrm{e}}_{u,v}$ as
\ban{
  r^{\mathrm{v}}_u(G) 
  & = \sum_{v:v\neq u}\bclr{\^c_u e_{uv}c_v+c_u e_{uv} \^c_v},\label{25}\\
  r^{\mathrm{e}}_{u,v}(G) 
  & = \^e_{uv}\clr{s_{\mathrm{c},0}c_{uv} + s_{\mathrm{d},0}\^c_{uv}} + e_{uv}\clr{s_{\mathrm{c},1}c_{uv}+s_{\mathrm{d},1}\^c_{uv}\label{26}
}}
(recall that we drop the dependence on $G$ in the useage of $c_v$ and $e_{uv}$ for convenience). In order to understand the effect of vertex and edge changes on coloured subgraph densities, we introduce the following quantities. Let $F$ be a coloured subgraph, and let $u_1,\dots,u_k$ be vertices of $G$. It is not difficult to see that the effect of a colour change at vertex $u$ on a coloured subgraph indicator at $u_1,\dots,u_k$, measuring the presence of the coloured subgraph $F$ at that location, can be expressed by 
\small
\be{
\chi^{F,\mathrm{v}}_{u_1,\dots,u_k}(u) 
= \begin{cases}
& \text{if $e^F_{ij}=1$ implies $e_{u_iu_j}=1$ for all $1\leq i<j\leq k$,} \\
+ 1 & \text{if there is $i$ such that $u_i=u$ with $c_{u_i}=1-c^F_i$, }\\
& \text{and if $c^F_i=c_{u_i}$ for all $1\leq i\leq k$ for which $u_i\neq u$;}\\[2ex]
& \text{if $e^F_{ij}=1\hence  e_{u_iu_j}=1$ for all $1\leq i<j\leq k$,} \\
- 1 & \text{if there is $i$ such that $u_i=u$ with $c_{u_i}=c^F_i$, }\\
& \text{and if $c^F_i=c_{u_i}$ for all $1\leq i\leq k$ for which $u_i\neq u$;}\\[2ex]
0 & \text{else,}
\end{cases}
}
\normalsize
Thus, we define and write
\ben{\label{27}
\D_u t^{\inj}_F(G) \coloneqq t^{\inj}_F(G^u)-t^{\inj}_F(G) 
= \frac{1}{(n)_k}\sump_{u_1,\dots,u_k} \chi^{F,\mathrm{v}}_{u_1,\dots,u_k}(u). 
}
Similarly, the effect of an edge change between $u$ and $v$ on a coloured subgraph indicator at $u_1,\dots,u_k$ can be expressed by
{\small\bes{
\mel\chi^{F,\mathrm{e}}_{u_1,\dots,u_k}(u,v)\\ 
&= \begin{cases}
& \text{if $e^F_{ij}=1\hence  e_{u_iu_j}=1$ for all $1\leq i<j\leq k$ for which $\{u_i,u_j\}\neq \{u,v\}$,} \\
+ 1 & \text{if $\exists 1\leq i<j\leq k$ such that $\{u_i,u_j\}=\{u,v\}$, $e^F_{ij}=1$ and $e_{uv}=0$,} \\ 
& \text{and if $c_{u_i}=c^F_i$ for all $1\leq i\leq k$;}\\[2ex]
& \text{if $e^F_{ij}=1$ implies $e_{u_iu_j}=1$ for all $1\leq i<j\leq k$,} \\
- 1& \text{if there are $1\leq i<j\leq k$ such that $\{u_i,u_j\}=\{u,v\}$ and $e^F_{ij}=1$,} \\ 
& \text{and if $c_{u_i}=c^F_i$ for all $1\leq i\leq k$;}\\[2ex]
0 & \text{else.}
\end{cases}
}}
\normalsize
Thus, we define and write
\ben{\label{28}
\D_{uv} t^{\inj}_F(G) \coloneqq t^{\inj}_F(G^{uv})-t^{\inj}_F(G) = \frac{1}{(n)_k}
\sump_{u_1,\dots,u_k} \chi^{F,\mathrm{e}}_{u_1,\dots,u_k}(u,v). 
}
Straightforward arguments yield
\be{
\babs{\D_{u} t^{\inj}_F(G)}\leq \frac{C_F}{n},\qquad \babs{\D_{uv} t^{\inj}_F(G)}\leq \frac{C_F}{n^2},
}
where $C_F$ only depends on the coloured graph $F$. 

Now, let $f\colon [0,1]^d\to\R$ be a three times continuously partially differentiable function. Abbreviate $\bt^{\inj}(G) = \bclr{t^{\inj}_{F_m}(G)}_{m=1}^d$ and $(\D_{u}\bt^{\inj})(G) = \bclr{\D_{u} t^{\inj}_{F_m}(G)}_{m=1}^d$. Taylor expansion yields
\bes{
\mel f\bclr{\bt^{\inj}(G^u)} - f\bclr{\bt^{\inj}(G)}
= f\bclr{\bt^{\inj}(G)+(\D_u\bt)^{\inj}(G)} - f\bclr{\bt^{\inj}(G)}\\
&\quad= \sum_{m=1}^d \D_ut^{\inj}_{F_m}(G) f_m\bclr{\bt^{\inj}(G)}
+ \tfrac12 \sum_{m,m'=1}^d \D_ut^{\inj}_{F_m}\D_ut^{\inj}_{F_{m'}}(G) f_{m,m'}\bclr{\bt^{\inj}(G)} \\
&\qquad+ R_u,
}
where 
\be{
\abs{R_u} \leq d^3 \sup_{i,j,k} \norm{\D_u t^{\inj}_{F_{i}}}\, \norm{\D_u t^{\inj}_{F_{j}}}\,
\norm{\D_u t^{\inj}_{F_{k}}}\,\norm{f_{ijk}} \leq \frac{C_{f,\bF}}{n^3}
}
with $\norm{\cdot}$ denoting the supremum norm and $\bF=(F_1,\dots,F_d)$. Put $(\D_{u,v}\bt^{\inj})(G) = \bclr{\D_{u,v} t^{\inj}_{F_m}(G)}_{m=1}^d$ and Taylor expand
\be{
\begin{aligned}
&f\bclr{\bt^{\inj}(G^{u,v})} - f\bclr{\bt^{\inj}(G)}
= f\bclr{\bt^{\inj}(G)+(\D_{u,v}\bt^{\inj})(G)} - f\bclr{\bt^{\inj}(G)}\\
&\quad = \sum_{m=1}^d \D_{u,v}t^{\inj}_{F_m}(G) f_m\bclr{\bt^{\inj}(G)} + R_{uv},
\end{aligned}
}
where 
\be{
\abs{R_{uv}} \leq d^2 \sup_{i,j} \norm{\D_{uv} t^{\inj}_{F_{i}}}\,\norm{\D_{uv} t^{\inj}_{F_{j}}}\,\norm{f_{ij}} 
\leq \frac{C_{f,\bF}}{n^4}.
}
We therefore obtain
\ban{
\mel (\cA_n(f\circ \bt^{\inj}))(G) \notag\\
&= \eta\sum_{1\leq u\leq n}r^{\mathrm{v}}_u(G) \bclr{f(\bt^{\inj}(G^u))-f(\bt^{\inj}(G))}\notag\\ 
&\qquad + \rho\sum_{1\leq u <  v\leq n} r^{\mathrm{e}}_{u,v}(G)\bclr{f(\bt^{\inj}(G^{uv}))-f(\bt^{\inj}(G))}\notag\\
&= \eta\sum_{1\leq u\leq n}r^{\mathrm{v}}_u(G) \bbbcls{\,\sum_{m=1}^d \D_{u}t^{\inj}_{F_m}(G)
f_m\bclr{\bt^{\inj}(G)}\notag\\
&\qquad\qquad\qquad\qquad 
+ \tfrac12\sum_{m,m'=1}^d \D_{u}t^{\inj}_{F_m}\D_{u}t^{\inj}_{F_{m'}}(G)f_{m,m'}\bclr{\bt^{\inj}(G)} + R_u}\notag\\
&\qquad + \rho\sum_{1\leq u <  v\leq n} r^{\mathrm{e}}_{u,v}(G)
\bbbcls{\sum_{m=1}^d \D_{u,v}t^{\inj}_{F_m}(G)f_m\bclr{\bt^{\inj}(G)} + R_{uv}}\notag\\
\bs{
&= \sum_{m=1}^d \bbbcls{\eta\sum_{1\leq u\leq n}r^{\mathrm{v}}_u(G) \D_{u}t^{\inj}_{F_m}(G)
+ \rho\sum_{1\leq u <  v\leq n} r^{\mathrm{e}}_{u,v}(G)\D_{u,v}t^{\inj}_{F_m}(G)}f_m\bclr{\bt^{\inj}(G)}\\
&\qquad + \tfrac12\sum_{m,m'=1}^d\bbbcls{ \eta\sum_{1\leq u\leq n}r^{\mathrm{v}}_u(G) 
\D_{u}t^{\inj}_{F_m}\D_{u}t^{\inj}_{F_{m'}}(G)}
f_{m,m'}\bclr{\bt^{\inj}(G)}+R, 
}\label{29}
}
where
\be{
R = \eta \sum_{1\leq u\leq n}r^{\mathrm{v}}_u(G) R_u + \rho \sum_{1\leq u <  v\leq n} r^{\mathrm{e}}_{u,v}(G) R_{uv}.
}
Since $r^{\mathrm{v}}_u(G)$ is bounded by $n$ and $r^{\mathrm{e}}_{u,v}(G)$ is bounded, we have
\bes{
\abs{R} = \frac{C_{f,\bF,\eta,\rho}}{n}.
}
Tedious but straightforward computations now yield the claim (see Appendix~\ref{sec17} for further details).
\end{proof}

The action of the generator on subgraph densities (not necessarily injective) is slightly more involved, but can be derived from Lemma~\ref{lem5}. To this end,  define the multisets $\cS^\diamond_{\scriptscriptstyle=,+}(F)$, $\cS^\diamond_{\scriptscriptstyle\neq,+}(F)$, $\cS^\diamond_{\scriptscriptstyle=,-}(F)$, and $\cS^\diamond_{\scriptscriptstyle\neq,-}(F)$ as in Table~\ref{tab2}. We are now ready to state and prove the main result of this section.

\begin{lemma}[Action of generator on (regular) subgraph densities] 
\label{lem6} 
For some $d\geq 1$, let $F_1,\dots,F_d$ be coloured graphs. Then, for $G$ a coloured graph on $n$ vertices,
\besn{
\label{30}
(\cA_n(g\circ \bt))(G)
& = \sum_{m=1}^d\bclr{\eta \mu_{F_m}^{\mathrm{v}}(G)+\rho \mu_{F_m}^{\mathrm{e}}(G)}g_m\bclr{\bt(G)} \\
&\qquad + \frac12\sum_{m,m'=1}^d \eta \sigma_{F_m,F_{m'}}^{\mathrm{v}}(G)g_{m,m'}\bclr{\bt(G)} + \bigo(n^{-1}),       
}
where $\mu_{F}^{\mathrm{v}}(G)$, $\mu_{F}^{\mathrm{e}}(G)$, and $\sigma_{F,F'}^{\mathrm{v}}(G)$ are defined in Table~\ref{tab4}. The constant in $\bigo(n^{-1})$ depends only on $F_1,\dots,F_d$, $g$ and on the model parameters.
\end{lemma}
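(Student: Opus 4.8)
The plan is to \emph{reduce} Lemma~\ref{lem6} to Lemma~\ref{lem5} by expressing each ordinary coloured subgraph density as a finite combination of injective subgraph densities over quotient graphs. Recall the classical identity: for a coloured graph $F$ on $[k]$ and $G$ a coloured graph on $n$ vertices,
\be{
  t_F(G) = \sum_{\pi}\frac{(n)_{|\pi|}}{n^{k}}\, t^{\inj}_{F/\pi}(G),
}
where $\pi$ runs over the partitions of $[k]$ that are \emph{$F$-admissible} (every block monochromatic in $F$, no block containing both endpoints of an edge of $F$, so that $F/\pi$ is again a simple coloured graph), and $|\pi|$ is the number of blocks; this holds because a homomorphism $F\to G$ is determined by the partition of $[k]$ it induces together with an injective homomorphism of the corresponding quotient. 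Since $(n)_{|\pi|}/n^{k}=\bigo(n^{-(k-|\pi|)})$, only the trivial partition and the single-vertex-merge partitions will matter at the required precision --- \emph{even after} the order-$n$ amplification produced by the factor $(n-k)$ in the drift coefficients, a double merge still contributes $\bigo(n^{-2})\cdot\bigo(n)=\bigo(n^{-1})$. Collecting these quotients, $\cH=\{F_m\}\cup\{F_m/\{a,b\}\colon\{a,b\}\ F_m\text{-admissible}\}$ is a finite family depending only on $F_1,\dots,F_d$, and there is a \emph{linear} map $\Phi_n\colon [0,1]^{\cH}\to[0,1]^d$ with coefficients $(n)_{|\pi|}/n^{k_m}\in[0,1]$ such that $\bt(G)=\Phi_n\bclr{\bt^{\inj}_{\cH}(G)}$.

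Consequently $g\circ\bt=\tilde g_n\circ\bt^{\inj}_{\cH}$ with $\tilde g_n\coloneqq g\circ\Phi_n$; as $\Phi_n$ is linear with uniformly bounded coefficients while $g\in C^3([0,1]^d)$, the $C^3$-norm of $\tilde g_n$ is bounded uniformly in $n$, so applying Lemma~\ref{lem5} to the family $\cH$ and the function $\tilde g_n$ yields an expansion of $(\cA_n(g\circ\bt))(G)$ whose $\bigo(n^{-1})$ remainder is genuinely $\bigo(n^{-1})$. Because $\Phi_n$ is linear, the chain rule is exact: $(\tilde g_n)_H=\sum_m\bclr{\sum_{\pi\colon F_m/\pi\cong H}(n)_{|\pi|}/n^{k_m}}\,g_m(\bt(G))$, and likewise for the second derivatives. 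Interchanging summations, the coefficient of $g_m(\bt(G))$ becomes $\sum_{\pi\ F_m\text{-adm}}\tfrac{(n)_{|\pi|}}{n^{k_m}}\bclr{\eta\,\~\mu^{\mathrm v}_{F_m/\pi}(G)+\rho\,\~\mu^{\mathrm e}_{F_m/\pi}(G)}$ and the coefficient of $g_{m,m'}(\bt(G))$ becomes the corresponding doubly-weighted sum of $\~\sigma^{\mathrm v}_{F_m/\pi,F_{m'}/\pi'}(G)$. It then remains to verify
\be{
  \sum_{\pi}\tfrac{(n)_{|\pi|}}{n^{k}}\bclr{\eta\,\~\mu^{\mathrm v}_{F/\pi}(G)+\rho\,\~\mu^{\mathrm e}_{F/\pi}(G)}
  = \eta\,\mu^{\mathrm v}_{F}(G)+\rho\,\mu^{\mathrm e}_{F}(G)+\bigo(n^{-1}),
}
together with $\sum_{\pi,\pi'}(\cdots)\,\~\sigma^{\mathrm v}_{F/\pi,F'/\pi'}(G)=\sigma^{\mathrm v}_{F,F'}(G)+\bigo(n^{-1})$.

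For the two $\bigo(1)$ coefficients, namely $\~\sigma^{\mathrm v}$ and the edge drift $\~\mu^{\mathrm e}$, only the trivial partition survives, since a non-trivial $\pi$ carries weight $(n)_{|\pi|}/n^k=\bigo(n^{-1})$; the identity then reduces to replacing each $t^{\inj}_H$ by $t_H$, at a cost $\bigo(n^{-1})$ by \eqref{16}. The vertex drift is the delicate case: its leading part $(n-v(F/\pi))\sum_{H\in\cS(F/\pi)}t^{\inj}_H(G)$ is of order $n$, so the single-vertex-merge partitions contribute at order $1$, while partitions with two or more merges are genuinely $\bigo(n^{-1})$. Here one expands the prefactors $(n)_k/n^k=1-\binom k2/n+\bigo(n^{-2})$ and $(n)_{k-1}/n^k=1/n+\bigo(n^{-2})$, converts the order-$n$ injective densities (on $k+1$ vertices) into regular ones --- each such conversion producing further order-$1$ corrections through a second use of the same identity --- and checks that all the resulting order-$1$ contributions, together with the single-merge terms $\~\mu^{\mathrm v}_{F/\pi}$, reassemble into exactly $\sum_{H\in\cS^\diamond(F)}t_H(G)$. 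Equivalently, this is a bookkeeping identity among signed multisets of coloured graphs, and the four constituent families $\cS^\diamond_{\scriptscriptstyle=,+}(F),\cS^\diamond_{\scriptscriptstyle=,-}(F),\cS^\diamond_{\scriptscriptstyle\neq,+}(F),\cS^\diamond_{\scriptscriptstyle\neq,-}(F)$ are engineered precisely to match the cases that arise: which pair of $F$-vertices is identified, whether they are joined by an edge of $F$, and the colour of the adjoined vertex relative to that of the merge site.

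The main obstacle is exactly this last identity --- tracking how the order-$1/n$ corrections, amplified by the order-$n$ drift factor, recombine into the multiset $\cS^\diamond(F)$. Like the computation underlying Lemma~\ref{lem5}, it is long and purely mechanical: a case analysis over the collision patterns of the non-injective index map $[k]\to[n]$ implicit in $t_F$, organised so that each correction term is matched to a generator of $\cS^\diamond(F)$ with the correct sign and multiplicity while the $\bigo(n^{-1})$ remainder absorbs everything of lower order. The remaining ingredients --- the uniform $C^3$-bound on $\tilde g_n$, the linearity and uniform boundedness of $\Phi_n$, and the elementary treatment of the $\~\mu^{\mathrm e}$ and $\~\sigma^{\mathrm v}$ parts --- are routine.
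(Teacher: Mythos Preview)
Your proposal is correct and follows essentially the same route as the paper: express $t_F$ in terms of injective densities via the quotient expansion (the paper writes this out explicitly to order $n^{-1}$ rather than invoking the full partition identity), apply Lemma~\ref{lem5} to the composite function, and then track how the order-$n$ vertex drift interacts with the single-merge corrections to produce the $\cS^\diamond$ terms. The paper carries out the final bookkeeping by comparing $\sum_{H\in\cS^\diamond_{\scriptscriptstyle=}(F)}\sum_{H'\in\cS_\pm(H)}$ against $\sum_{H\in\cS_\pm(F)}\sum_{H'\in\cS^\diamond_{\scriptscriptstyle=}(H)}$ and observing that the $\cS_-$ pair cancels exactly while the $\cS_+$ pair leaves precisely $\cS^\diamond_+(F)-\cS^\diamond_-(F)$; this is the concrete version of the ``case analysis over collision patterns'' you outline. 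One small correction: the case distinction in $\cS^\diamond$ is over whether the merged vertices have the \emph{same or different colour}, not over whether they are joined by an edge of $F$ --- only non-adjacent pairs are ever merged, since adjacent ones would create a loop.
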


\begin{proof}
Note that
\bes{
    t_F(G) & = \frac{1}{n^k}\sum_{\ell_1,\dots,\ell_k} T^F_{\ell_1,\dots,\ell_k}(G) \\
    & = \bbbclr{1-{k\choose 2}\frac{1}{n}}t^{\inj}_F(G)  \\
    & \qquad + \frac{1}{n^k}\sum_{1\leq a<b\leq k\,\,}\sump_{\ell_1,\dots, \ell_{b-1},\ell_{b+1},\dots,\ell_k} 
    T^F_{\ell_1,\dots, \ell_{b-1},\ell_a,\ell_{b+1},\dots,\ell_k}(G)
    + \bigo(n^{-2}),
}
where
\be{
  T^F_{\ell_1,\dots,\ell_k}(G) = \prod_{a\stackrel{F}{\sim}b} e_{\ell_a \ell_b}(G) \prod_{a\in V(F)} \I[c^G_{\ell_a}=c^F_a].
}
Now,
\be{
    T^F_{\ell_1,\dots,\ell_{b-1},\ell_a,\ell_{b+1},\dots,\ell_k}(G)
}
can be non-zero only if $c^F_a=c^F_b$ and only if $a$ and $b$ are not connected in $F$, in which case
\be{
    \sump_{\ell_1,\dots, \ell_{b-1},\ell_{b+1},\dots,\ell_k} 
    T^F_{\ell_1,\dots, \ell_{b-1},\ell_a,\ell_{b+1},\dots,\ell_k}(G)
    = \sump_{\ell_1,\dots,\ell_{k-1}}T^{F(a=b)}_{\ell_1,\dots,\ell_{k-1}}(G).
}
Hence
\be{
    t_F(G) = \bbbclr{1-{k\choose 2}\frac{1}{n}}t^{\inj}_F(G)  
    + \frac{1}{n}\sum_{\substack{1\leq a<b\leq k\\c_a^F=c_b^F,a\stackrel{F}{\not\sim}b}} t^{\inj}_{F(a=b)}(G) 
    + \bigo(n^{-2}).
}
Note that the $\bigo(n^{-2})$-term itself is also composed of injective coloured subgraph densities of $G$, but all with prefactors of order $n^{-2}$ and smaller. Define
\ba{
\cS^\diamond_{\scriptscriptstyle=}(F) 
& = \sum_{a\in V(F)}\sum_{\substack{b\in V(F)\colon\\b\neq a,a\stackrel{F}{\not\sim}b, c^{F}_a=c^{F}_b}}
\clc{F(a=b)}.
}
Next, apply Lemma~\ref{lem5} with 
\be{
    f(\bt^{\inj}(G)) = g\bbbclr{\bbbcls{\bbbclr{1-{k\choose 2}\frac{1}{n}}t^{\inj}_{F_i}(G) 
    + \frac{1}{2n}\sum_{H\in\cS^\diamond_+(F_i)} t^{\inj}_H(G) + \bigo(n^{-2})}_{1\leq k \leq d}},
}
where the vector $\bt^{\inj}(G)$ comprises the injective coloured subgraph densities of all~z$F_i$, all $F_i(a=b)$, and all other coloured subgraph densities appearing in the $\bigo(n^{-2})$-term. Up to terms of order $\bigo(n^{-1})$, the terms in \eqref{30} are easy to derive from \eqref{24}, except for 
\besn{\label{31}
  \~\mu_{F}^{\mathrm{v}}(G) 
  & = (n-k)\sum_{H\in\cS(F)}\bbbclr{1-{k\choose 2}\frac{1}{n}} t^{\inj}_H(G) 
  + \sum_{H\in\cS^\circ(F)}\bbbclr{1-{k\choose 2}\frac{1}{n}} t^{\inj}_H(G)  \\
  &\qquad + \frac{n-k}{2n}\sum_{H\in \cS^\diamond_+(F)}\sum_{H'\in\cS(H)}t^{\inj}_{H'}(G)+ \bigo(n^{-1}) \\
  &= (n-k)\sum_{H\in\cS(F)} t_H(G) 
  + \sum_{H\in\cS^\circ(F)} t_H(G)\\
  &\qquad + \frac{n-k}{2n}\sum_{H\in \cS^\diamond_{\scriptscriptstyle=}(F)}
  \sum_{H'\in\cS(H)}t_{H'}(G)
  -\frac{n-k}{2n}\sum_{H\in\cS(F)}\sum_{H'\in \cS^\diamond_{\scriptscriptstyle=}(H)}
  t_{H'}(G)\\
  &\qquad + \bigo(n^{-1}),
}
the third and fourth expressions of which require some arguments. We first compare
\ben{\label{32}
    \sum_{H\in \cS^\diamond_{\scriptscriptstyle=}(F)} \sum_{H'\in\cS_+(H)} t_{H'}(G)
    \qquad\text{and}\qquad \sum_{H\in\cS_+(F)}\sum_{H'\in \cS^\diamond_{\scriptscriptstyle=}(H)} t_{H'}(G).
}
Let $u,v,w\in V(F)$ be distinct vertices, and assume that $c_u = c_v$ and $e_{uv}=0$. Clearly, the following operations are commutative: (1) merge $u$ and $v$; $(2)$ attach a new vertex to $w$ with the same colour as $w$ and then flip the colour of $w$. Hence, we only need to consider the case where the two operations affect the same vertices. Thus, the difference of the two terms in \eqref{32} is
\be{
    \sum_{H\in \cS^\diamond_+(F)} t_{H'}(G)
    -  \sum_{H\in \cS^\diamond_-(F)} t_{H'}(G).
}
In order to compare
\ben{\label{33}
    \sum_{H\in \cS^\diamond_{\scriptscriptstyle=}(F)} \sum_{H'\in\cS_-(H)} t_{H'}(G)
    \qquad\text{and}\qquad \sum_{H\in\cS_-(F)}\sum_{H'\in \cS^\diamond_{\scriptscriptstyle=}(H)} t_{H'}(G),
}
let $u,v\in V(F)$ be distinct vertices, and assume that $c_u = c_v$ and $e_{uv}=0$. Clearly, the following operations are commutative for \emph{any} $w\in V(F)$: (1) merge $u$ and $v$; $(2)$ attach a new vertex to $w$ with the opposite colour as $w$. Hence, the two sums in \eqref{33} are identical, and consequently cancel out in the difference~\eqref{31}. 
\end{proof} 


\subsection{Evolution of coloured subgraph densities under the projection}
\label{sec13}

In this section we analyse the evolution of the subgraph densities for the projected process with the help of the generator 
$\cA_n$. The evolution is captured in Lemma~\ref{lem7}. Before we state this lemma we make a preliminary observation about the ``drift'' and the ``diffusion coefficients'' from Lemma~\ref{lem6}.

\begin{lemma}[Property of drift and diffusion coefficients]
\label{cor1}
Let $\-F$ be an uncoloured graph, and let $H'$ and $G$ be coloured graphs. Let $\mu^{\mathrm{v}}, \sigma^{\mathrm{v}}$ be as in Lemma~\ref{lem6}.  Then
\be{
\sum_{H\in\cC(\-F)} \mu^{\mathrm{v}}_H(G) = 0,
\qquad
\sum_{H\in\cC(\-F)} \sigma^{\mathrm{v}}_{H,H'}(G) = \sum_{H\in\cC(\-F)} \sigma^{\mathrm{v}}_{H',H}(G) = 0.
}
\end{lemma}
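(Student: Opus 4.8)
The plan rests on the following conceptual fact: the vertex dynamics only reshuffles colours and never alters the underlying uncoloured graph, so if $\cA_n^{\mathrm{v}}$ denotes the vertex part of the generator \eqref{1} (the $\eta$-sum), then $t_{\bar F}(G^u)=t_{\bar F}(G)$ for every vertex $u$, and hence $\cA_n^{\mathrm{v}}$ annihilates the uncoloured density $t_{\bar F}=\sum_{H\in\cC(\bar F)}t_H$. Since $\mu^{\mathrm{v}}$ and $\sigma^{\mathrm{v}}$ are precisely the drift and diffusion coefficients produced by the $\eta$-part of the generator in Lemma~\ref{lem6} (whereas $\mu^{\mathrm{e}}$ carries the $\rho$-part), this makes the three asserted sums vanish on the nose. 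To avoid having to chase the $\bigo(n^{-1})$ error terms in Lemma~\ref{lem6}, however, I would prove the identities \emph{directly} at the level of the signed multisets in Tables~\ref{tab2}--\ref{tab3}, showing that their positive and negative parts enumerate the same multiset of coloured graphs once one sums over all $2^{v(\bar F)}$ colourings of $\bar F$.

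\textbf{The drift identity.} For $\sum_{H\in\cC(\bar F)}\mu^{\mathrm{v}}_H(G)=0$ I would treat the three groups of terms in $\mu^{\mathrm{v}}_H$ (those from $\cS(H)$, from $\cS^\circ(H)$, and from $\cS^\diamond(H)$) separately; the prefactor $n-k$ on the $\cS$-term is harmless, as it does not depend on $H$. In each group, fix the vertex data indexing the outer multiset sum --- a single vertex $p$ for $\cS$, an ordered pair $(p,q)$ for $\cS^\circ$, an ordered pair $(a,b)$ of non-adjacent vertices of $\bar F$ for $\cS^\diamond$ --- and sum the corresponding term over all colourings $H$. The key point is that, with this data fixed, the ``$+$'' and ``$-$'' multisets enumerate the \emph{same} multiset of coloured graphs as $H$ varies: for $\cS$, both yield every colouring of $\bar F$-with-a-pendant-at-$p$ in which $p$ and the pendant receive opposite colours, each once; for $\cS^\circ$ (summed over the $H$ with $c^H_p=c^H_q$, resp.\ $c^H_p\neq c^H_q$), both yield every colouring of $\bar F$-plus-edge-$pq$ in which $p$ and $q$ receive opposite colours; for $\cS^\diamond$ (summed over the $H$ with $c^H_a=c^H_b$, resp.\ $c^H_a\neq c^H_b$), both yield every colouring of the merged-plus-pendant graph in which the merged vertex and the pendant receive opposite colours, the remaining vertices being arbitrary. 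In each instance the bijection with the colourings of $\bar F$ is explicit (one simply reads the relevant colours back off the output), so $\cS(H)$, $\cS^\circ(H)$, $\cS^\diamond(H)$ summed over $H$ contribute $0$ to $\sum_H t_\bullet(G)$, giving the claim.

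\textbf{The diffusion identities.} For $\sum_{H\in\cC(\bar F)}\sigma^{\mathrm{v}}_{H,H'}(G)=0$ I would fix $a\in V(\bar F)$, $b\in V(H')$, set $\beta=c^{H'}_b$, and examine the contribution of this pair to $\cT(H,H')=\cT_{\scriptscriptstyle=,+}+\cT_{\scriptscriptstyle=,-}-\cT_{\scriptscriptstyle\neq,+}-\cT_{\scriptscriptstyle\neq,-}$: the two ``$=$''-pieces range over the colourings $H$ with $c^H_a=\beta$ and the two ``$\neq$''-pieces over those with $c^H_a\neq\beta$, and all four share the same uncoloured skeleton. Tracking the ``$c_\bullet\gets\cdot$'' operations one finds that, over these colourings, $\cT_{\scriptscriptstyle=,+}$ produces exactly the colourings with merged vertex of colour $1-\beta$, pendant of colour $\beta$, and free colours elsewhere --- the same family as $\cT_{\scriptscriptstyle\neq,-}$ --- while $\cT_{\scriptscriptstyle=,-}$ produces the same family as $\cT_{\scriptscriptstyle\neq,+}$. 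With the sign pattern $+,+,-,-$ the contribution of $(a,b)$ cancels, and summing over all $(a,b)$ gives the claim. The companion identity $\sum_{H\in\cC(\bar F)}\sigma^{\mathrm{v}}_{H',H}(G)=0$ then follows either by rerunning this argument with the roles of the two graphs exchanged, or --- more cheaply --- from the symmetry $\sigma^{\mathrm{v}}_{F,F'}(G)=\sigma^{\mathrm{v}}_{F',F}(G)$, which is immediate from $\cT_{\scriptscriptstyle=,\pm}(F,F')=\cT_{\scriptscriptstyle=,\pm}(F',F)$ (up to relabelling, which is irrelevant for $t_\bullet$) together with the identity $\cT_{\scriptscriptstyle\neq,+}(F,F')+\cT_{\scriptscriptstyle\neq,-}(F,F')=\cT_{\scriptscriptstyle\neq,+}(F',F)+\cT_{\scriptscriptstyle\neq,-}(F',F)$ noted just after Table~\ref{tab3}.

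\textbf{Main obstacle.} The difficulty is entirely one of bookkeeping: one must keep precise track of which vertex's colour each ``$\gets$'' operation modifies, of the fact that the merge $F\merge{a=b}F'$ discards the colour of the second merged vertex, and of the exact cardinalities of the colouring sets, so that the claimed equalities of multisets are genuine bijections with no hidden multiplicity. The $\cT$-case for $\sigma^{\mathrm{v}}$ is the most delicate, because of the four-way split, the $+,+,-,-$ sign pattern, and the interplay between the ``$=$ versus $\neq$'' condition on $c^H_a$ and the colour that the merge discards; everything else is routine.
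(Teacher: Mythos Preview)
Your proposal is correct and follows essentially the same approach as the paper: both reduce the claim to the multiset identities $\sum_{H\in\cC(\bar F)}\cS_+(H)=\sum_{H\in\cC(\bar F)}\cS_-(H)$, the analogous identities for $\cS^\circ$, $\cS^\diamond$, and the pairings $\sum_H\cT_{=,+}(H,H')=\sum_H\cT_{\neq,-}(H,H')$, $\sum_H\cT_{=,-}(H,H')=\sum_H\cT_{\neq,+}(H,H')$, and prove these by explicit bijections tracking the colour operations. The paper writes out only the $\cS$-bijection and declares the rest analogous; you sketch all cases, including the four-way $\cT$ split, and add the conceptual remark that the vertex dynamics leaves $t_{\bar F}$ invariant, but the underlying argument is the same.
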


\begin{proof} 
It is easy to see that it is enough to show the following statements:
\bg{
  (i)\enskip  
  \sum_{H\in\cC(\-F)} \cS_+(H) = \sum_{H\in\cC(\-F)} \cS_-(H),
  \qquad
  (ii)\sum_{H\in\cC(F)}\cS^\circ_+(H) 
  = \sum_{H\in\cC(F)} \cS^\circ_-(H),
  \\
  (iii)\enskip 
  \sum_{H\in\cC(\-F)}\cS^\diamond_+(F) = \sum_{H\in\cC(\-F)}\cS^\diamond_-(F),
  \\ 
  (iv)\enskip
  \sum_{H\in\cC(\-F)}\cT_{\scriptscriptstyle=,+}(H,H')
  = \sum_{H\in\cC(\-F)} \bclr{\cT_{\scriptscriptstyle\neq,-}(H,H'),
  \\
  (v)\enskip \sum_{H\in\cC(\-F)}\cT_{\scriptscriptstyle=,-}(H,H')}
  = \sum_{H\in\cC(\-F)}\cT_{\scriptscriptstyle\neq,+}(H,H').
}
We only prove $(i)$, the other equations being analogous. To this end, let $H_0\in \cC(\-F)$ and $H_0'\in\cS_+(H_0)$. We show that there is exactly one $H'_1$ on the right-hand side of $(i)$ such that $H_0'=H_1'$. Indeed, let $p\in V(\-F)$ be the vertex to which the new vertex is attached in the construction of $H_0'$ from $H_0$. Let $H_1 = H_0(c_p \gets 1-c_p^{H_0})$, and take $H_1' = H_1(+ (k+1),c_{k+1}\gets1-c_p^{H_1},p\sim k+1)$, which is in $\cS_-(H_1)$. It is clear that $H_1' = H_0'$, and that this is the only way to obtain $H'_0$.
\end{proof}


\paragraph{The projection.} 
Let $(\kappa,c)$ be a coloured graphon, and let $\bar{c} = \int_{[0,1]} \dd x c(x)$ be the average colour. For a coloured graph $F$, denote by $\bar{F}$ the corresponding uncoloured graph, and denote by $\cC(\-F)$ the set of all $2^k$ colourings of $\-F$. Throughout this section, $(\kappa,c)$ is fixed and is suppressed from the notation; that is, we write $t_{\-F}$ instead of $t_{\-F}(\kappa,c)$, write $\mu^{\mathrm{v}}_H$ instead of $\mu^{\mathrm{v}}_H(\kappa,c)$, and so forth. Note that
\be{
t_{\-F} = \sum_{H\in\cC(\-F)} t_{H}.
}
Define the projection $\cP\colon \cW \to \cW_0$ by leaving the graphon unchanged and by replacing the colour function $c$ by the constant colour function $\-c$. Note that, for any coloured graph $F$,
\be{
(t_{F}\circ\cP)(\kappa,c) = \-c^{w(F)}(1-\-c)^{b(F)}\,t_{\bar{F}_m}(\kappa) = {\bar{c}}^{w(F)}(1-\bar{c})^{b(F)} \sum_{H\in\cC(\bar{F})} t_{H}(\kappa,c).
}
If $G\in\cG_n$, then $\cP(G)$ is defined through the canonical embedding of $G$ into $\cW$. We are now ready to state and prove the main result of this section.

\begin{table}[t!]
\footnotesize\centering
\begin{tabular}{l}
\toprule
\bf  Definition of Remainder Terms   \\
\midrule\\[-1ex]
$\displaystyle\Delta_{H} = t^{\inj}_H - \bar{c}^{w(H)}(1-\bar{c})^{b(H)} t^{\inj}_{\bar H}$\\[2ex]
$\displaystyle\Delta_{\mathrm{c},0,h}= \sum_{i=1}^d y_{\wv}^{w_i}y_{\bv}^{b_i}\, h_i(y)\sum_{\{r,s\}\in E(\bar{F}_i)} \sum_{H\in \cC(\bar{F}_i)} c_{rs}^H\bclr{\Delta_{H(r\not\sim s)}-\Delta_{H}}$\\[3ex]
$\displaystyle\Delta_{\mathrm{d},0,h}= \sum_{i=1}^d y_{\wv}^{w_i}y_{\bv}^{b_i}\, h_i(y)\sum_{\{r,s\}\in E(\bar{F}_i)} \sum_{H\in \cC(\bar{F}_i)} \^c_{rs}^H\bclr{\Delta_{H(r\not\sim s)}-\Delta_{H}}$ \\[3ex]
$\displaystyle\Delta_{\mathrm{c},1,h}=-\sum_{i=1}^d y_{\wv}^{w_i}y_{\bv}^{b_i}\, h_i(y)\sum_{\{r,s\}\in E(\bar{F}_i)} \sum_{H\in \cC(\bar{F}_i)} c_{rs}^H \Delta_{H}$\\[3ex]
$\displaystyle\Delta_{\mathrm{d},1,h}=-\sum_{i=1}^d y_{\wv}^{w_i}y_{\bv}^{b_i}\, h_i(y)\sum_{\{r,s\}\in E(\bar{F}_i)} \sum_{H\in \cC(\bar{F}_i)} \^c_{rs}^H \Delta_{H}$\\[3ex]
$\displaystyle\Delta_{\wv,\wv} = 2\Delta_{\whiteblackedge}y_{\wv}^{-2}\sum_{i=1}^d (w_i)_2 y_{F_i} h_i(y)+ 2\Delta_{\whiteblackedge} \red{y}_{\wv}^{-2}\sum_{i=1}^d\sum_{j=1}^d w_i w_j y_{F_i} y_{F_j} h_{i,j}(y)$\\[3ex]
$\displaystyle\Delta_{\wv,\bv} = 2\Delta_{\whiteblackedge}y_{\wv}^{-1}y_{\bv}^{-1}\sum_{i=1}^d w_i b_i y_{F_i} h_i(y)+ 2\Delta_{\whiteblackedge} y_{\wv}^{-1}y_{\bv}^{-1}\sum_{i=1}^d\sum_{j=1}^d w_ib_j y_{F_i} y_{F_j} h_{i,j}(y)$\\[3ex]
$\displaystyle\Delta_{\bv,\bv} = 2\Delta_{\whiteblackedge}y_{\bv}^{-2}\sum_{i=1}^d (b_i)_2 y_{F_i} h_i(y)+ 2\Delta_{\whiteblackedge} y_{\bv}^{-2}\sum_{i=1}^d\sum_{j=1}^d b_ib_j y_{F_i} y_{F_j} h_{i,j}(y)$\\[3ex]
\bottomrule
\end{tabular}
\caption{\label{tab5} The $\Delta$-quantities used in Lemma~\ref{lem7}. We have suppressed the dependence on the underlying graph $G$ with respect to which the coloured subgraph densities are evaluated.}
\end{table}

\begin{lemma}[Action of generator on subgraph densities under projection] 
\label{lem7}
Let $h$ be a three times continuously partially differentiable function, let $F_1,\dots,F_d$ be coloured graphs, let $\bt = \bclr{t_{F_1},\dots,t_{F_d}}$, let $G\in\cG_n$, and let $y_F = t_F\circ\cP(G)$ for any $F$. Then 
\besn{\label{34}
  \mel (\cA_n(h\circ \bt \circ \cP))(G) \\
  &= \rho\sum_{i=1}^d \bbbclc{\sum_{\{r,s\} \in E(F_i)}\bbcls{s_{\mathrm{c},0}\bclr{y_{\wv}^2+y_{\bv}^2}
  + 2s_{\mathrm{d},0 }y_{\wv}y_{\bv}}\bclr{y_{F_i(r\not\sim s)} - y_{F_i}}\\
  & \kern10em - e(F_i)\bbcls{s_{\mathrm{c},1}\bclr{y_{\wv}^2+y_{\bv}^2}+2s_{\mathrm{d},1}y_{\wv}y_{\bv}} y_{F_i}}h_i(y)\\
  &\quad + \eta\, y_{\blackwhiteedge}\sum_{i=1}^d\bbclr{\bclr{w_i y_{\wv}^{-1}- b_i y_{\bv}^{-1}}^2 
  - \bclr{w_i y_{\wv}^{-1}+ b_i y_{\bv}^{-1}}} y_{F_i}h_i(y) \\
  &\quad + \eta\, y_{\blackwhiteedge}\sum_{i=1}^d\sum_{j=1}^d \bclr{w_i y_{\wv}^{-1}-b_i y_{\bv}^{-1}}
  \bclr{w_j y_{\wv}^{-1}-b_j y_{\bv}^{-1}} y_{F_i} y_{F_j} h_{i,j}(y)\\
  &\quad + \rho\,\bclr{s_{\mathrm{c},0}{\Delta_{\mathrm{c},0,h}} + s_{\mathrm{d},0}{\Delta_{\mathrm{d},0,h}} 
  + s_{\mathrm{c},1}{\Delta_{\mathrm{c},1,h}} + s_{\mathrm{d},1}{\Delta_{\mathrm{d},1,h}}}\\
  & \quad + \eta\,\bclr{\Delta_{\wv,\wv} + \Delta_{\wv,\bv} + \Delta_{\bv,\bv}} + \bigo(n^{-1}),
}  
where the $\Delta$-terms are defined in Table~\ref{tab5}, and where $w_i=w(F_i)$ and $b_i=b(F_i)$ for $1\leq i\leq d$.
\end{lemma}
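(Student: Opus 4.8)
The plan is to recognise $h\circ\bt\circ\cP$ as a function of finitely many \emph{coloured} subgraph densities of $G$ and to apply Lemma~\ref{lem6}. Write $\bar F_i$ for the uncoloured graph underlying $F_i$, with $w_i=w(F_i)$ white and $b_i=b(F_i)$ black vertices. Since $\bar c^G=t_{\wv}(G)$ and $1-\bar c^G=t_{\bv}(G)$ are themselves single-vertex coloured subgraph densities, and since $(t_{F_i}\circ\cP)(G)=t_{\wv}(G)^{w_i}\,t_{\bv}(G)^{b_i}\sum_{H\in\cC(\bar F_i)}t_H(G)$, we may write $h\circ\bt\circ\cP=g\circ\bt'$, where $\bt'$ is the vector listing the densities $t_{\wv}$, $t_{\bv}$, and $t_H$ for $H\in\cC(\bar F_i)$ over $1\le i\le d$, and $g$ is obtained by substituting a variable $z_H$ for each $t_H$ (and $z_{\wv},z_{\bv}$ for $t_{\wv},t_{\bv}$) in the displayed identity. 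As $h$ is $C^3$ and $g$ is a polynomial composed with $h$, Lemma~\ref{lem6} applies to $g\circ\bt'$; it then remains to evaluate the drift and diffusion terms it produces and to simplify them. (Some care is needed when some $\bar F_i$ has a single vertex, or when the $\bar F_i$ are not pairwise non-isomorphic so that coordinates of $\bt'$ coincide; one then groups equal coordinates and adjusts the chain rule. Since only third-order Taylor information at $\bt'(G)$ is used, one may also extend $h$ to a $C^3$ function on a neighbourhood of $[0,1]^d$.)

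There are two computational inputs. First, by the chain rule, at the point $\bt'(G)$ --- writing $y_F=t_F\circ\cP(G)$, $y=(y_{F_1},\dots,y_{F_d})$, and using $y_{\wv}=\bar c^G$, $y_{\bv}=1-\bar c^G$ --- one has $\partial g/\partial z_H=y_{\wv}^{w_i}y_{\bv}^{b_i}h_i(y)$ for $H\in\cC(\bar F_i)$, \emph{independently of the particular $H$}; moreover $\partial g/\partial z_{\wv}=y_{\wv}^{-1}\sum_i w_i y_{F_i}h_i(y)$ and its analogue in $\bv$, while the second derivatives come out to $g_{\wv,\wv}=y_{\wv}^{-2}\bclr{\sum_i(w_i)_2\,y_{F_i}h_i(y)+\sum_{i,j}w_iw_j\,y_{F_i}y_{F_j}h_{i,j}(y)}$ and its obvious variants $g_{\wv,\bv},g_{\bv,\bv}$, and $g_{\wv,H}$, $g_{H,H'}$ are again constant along each $\cC(\bar F_i)$. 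Second, the Table~\ref{tab4} coefficients at the single-vertex graphs: since $\wv,\bv$ have no edges, $\mu^{\mathrm{e}}_{\wv}=\mu^{\mathrm{e}}_{\bv}=0$; since they have a single vertex, the multisets $\cS^\circ(\wv),\cS^\diamond(\wv)$ (and likewise for $\bv$) are empty, and $\cS_+,\cS_-$ each reduce to a single discordant-edge graph and hence cancel, so $\mu^{\mathrm{v}}_{\wv}=\mu^{\mathrm{v}}_{\bv}=0$; and a direct inspection of the multisets in Table~\ref{tab3} gives $\sigma^{\mathrm{v}}_{\wv,\wv}(G)=\sigma^{\mathrm{v}}_{\bv,\bv}(G)=2t_{\blackwhiteedge}(G)$ and $\sigma^{\mathrm{v}}_{\wv,\bv}(G)=\sigma^{\mathrm{v}}_{\bv,\wv}(G)=-2t_{\blackwhiteedge}(G)$.

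Now substitute these into Lemma~\ref{lem6} and split each sum according to whether the index is $\wv$, $\bv$, or some $H\in\cC(\bar F_i)$. Because $g_m$ and $g_{m,m'}$ are constant along each $\cC(\bar F_i)$, Lemma~\ref{cor1} kills the entire vertex-drift contribution and every diffusion term carrying a colouring index; what survives is the edge drift $\rho\sum_{i}y_{\wv}^{w_i}y_{\bv}^{b_i}h_i(y)\sum_{H\in\cC(\bar F_i)}\mu^{\mathrm{e}}_H(G)$ together with $\tfrac12\eta\bclr{\sigma^{\mathrm{v}}_{\wv,\wv}g_{\wv,\wv}+2\sigma^{\mathrm{v}}_{\wv,\bv}g_{\wv,\bv}+\sigma^{\mathrm{v}}_{\bv,\bv}g_{\bv,\bv}}=\eta\,t_{\blackwhiteedge}(G)\bclr{g_{\wv,\wv}-2g_{\wv,\bv}+g_{\bv,\bv}}$, plus the $\bigo(n^{-1})$ of Lemma~\ref{lem6}. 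Into the edge-drift sum I substitute $t_H=y_{\wv}^{w(H)}y_{\bv}^{b(H)}t_{\bar H}+\Delta_H$ (and similarly for $t_{H(r\not\sim s)}$, up to $\bigo(n^{-1})$ from regular-versus-injective densities); summing over $H\in\cC(\bar F_i)$ and using that $y_{\wv}^{w(H)}y_{\bv}^{b(H)}$ factorises over vertices, the endpoints $r,s$ of an edge contribute the factor $y_{\wv}^2+y_{\bv}^2$ where their colours are forced equal (coefficient $c^H_{rs}$) and $2y_{\wv}y_{\bv}$ where forced unequal (coefficient $\^c^H_{rs}$); with $y_{\wv}^{w_i}y_{\bv}^{b_i}t_{\bar F_i}=y_{F_i}$ and $y_{\wv}^{w_i}y_{\bv}^{b_i}t_{\bar F_i(r\not\sim s)}=y_{F_i(r\not\sim s)}$ this turns the main part into the first two lines of \eqref{34}, while the residual $\Delta_H$-contributions collect into $\rho\bclr{s_{\mathrm{c},0}\Delta_{\mathrm{c},0,h}+s_{\mathrm{d},0}\Delta_{\mathrm{d},0,h}+s_{\mathrm{c},1}\Delta_{\mathrm{c},1,h}+s_{\mathrm{d},1}\Delta_{\mathrm{d},1,h}}$. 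Likewise, splitting $t_{\blackwhiteedge}(G)=y_{\blackwhiteedge}+\Delta_{\blackwhiteedge}$ and expanding $g_{\wv,\wv}-2g_{\wv,\bv}+g_{\bv,\bv}$ with the chain-rule formulas accounts for the two $\eta\,y_{\blackwhiteedge}$-lines of \eqref{34} together with the remainder $\eta\bclr{\Delta_{\wv,\wv}+\Delta_{\wv,\bv}+\Delta_{\bv,\bv}}$; the $\bigo(n^{-1})$ carries through.

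The principal obstacle is the edge-drift bookkeeping: one must follow the operation $H\mapsto H(r\not\sim s)$ at the level of colourings, verify that summing $c^H_{rs}$ (resp.\ $\^c^H_{rs}$) over $\cC(\bar F_i)$ genuinely factors out the claimed $y_{\wv}^2+y_{\bv}^2$ (resp.\ $2y_{\wv}y_{\bv}$), and check that the residual sums of $\Delta$'s coincide with the quantities defined in Table~\ref{tab5}. The computation of the generator on injective subgraph densities underlying this step (Lemma~\ref{lem5}) is the one whose details are deferred to Appendix~\ref{sec17}. There is no conceptual difficulty once the reductions via Lemma~\ref{cor1} and the single-vertex coefficient computations are in hand, but the combinatorics of the multisets and of the colourings, together with the degenerate cases noted above, require careful organisation.
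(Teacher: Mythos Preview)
Your proposal is correct and follows essentially the same route as the paper: express $h\circ\bt\circ\cP$ as a smooth function of the densities $t_{\wv},t_{\bv}$ and $\{t_H:H\in\cC(\bar F_i)\}$, apply the generator expansion (the paper invokes \eqref{29} directly, you invoke Lemma~\ref{lem6}, which amounts to the same thing), use Lemma~\ref{cor1} to annihilate the vertex-drift and all diffusion terms carrying a colouring index, compute the single-vertex diffusion coefficients $\sigma^{\mathrm v}_{\wv,\wv}=\sigma^{\mathrm v}_{\bv,\bv}=-\sigma^{\mathrm v}_{\wv,\bv}=2t_{\blackwhiteedge}$, and then split $t_H=y_H+\Delta_H$ (resp.\ $t_{\blackwhiteedge}=y_{\blackwhiteedge}+\Delta_{\blackwhiteedge}$) to separate the main terms from the $\Delta$-remainders. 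One small point: the $\Delta_H$ in Table~\ref{tab5} is defined via \emph{injective} densities, so your split of the regular $t_H$ differs from the tabulated $\Delta_H$ by $\bigo(n^{-1})$, which you correctly flag; and the $\bigo(n^{-1})$ you carry through is indeed present in the paper's own derivation even though it is not displayed in the statement of \eqref{34}.
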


\begin{proof} 
To simplify notation, for any coloured graph $F$, write
\be{
x_F = x_F(G) =  t_{F}, \qquad x_{\bar{F}} = \sum_{H\in\cC(\bar{F})} x_H.
}  
In particular, $x_{\whitevertex} = y_{\whitevertex} = t_{\whitevertex}(G)$ and $x_{\blackvertex} = y_{\blackvertex} = t_{\bv}(G)$. Note that, for any coloured graph $F$,
\be{
(t_{F}\circ\cP)(G) = \-c^{w(F)}(1-\-c)^{b(F)}\,t_{\bar{F}_m}(\kappa) = {\bar{c}}^{w(F)}(1-\bar{c})^{b(F)} \sum_{H\in\cC(\bar{F})} t_{H}.
}
Let $F_0 = \whitevertex$. We can write
\be{
y_F = x_{\wv}^{w(F)} x_{\bv}^{b(F)} x_{\-F}.
}
We consider the action of the operator $\cA_n$ on functions of the form
\be{
f(x) = h(y), \qquad y=(y_{F_1},\dots,y_{F_d}),
}
with 
\be{
x = \clr{x_{iH}}_{0\leq i\leq d,H\in\cC(\-F_i)}.
}
Note that $x_H$ is short-hand for $t_H(G)$, whereas $x_{iH}$ is a variable. Also note that even when $F_i = \wv$ for some $1\leq i\leq d$, we consider $x_{0\wv}$ and $x_{i\wv}$ as different variables for the sake of calculating derivatives. The same is true for $\bv$.
Furthermore, note that, while $h$ takes $d$ arguments, the function $f$ takes $2+2^{k_1}+\cdots+2^{k_d}$ arguments. We have
\be{
\bclr{\cA_n \bclr{h \circ\bt\circ \cP}}\clr{G} = \bclr{\cA_n \bclr{h\circ y}}\clr{G} = \bclr{\cA_n \bclr{f\circ x}}(G),
}
and \eqref{29} yields
\bes{
(\cA_n(h\circ \bt \circ \cP))(G) 
& = \sum_{\substack{0\leq j\leq d,\\ H\in\cC(\bar{F}_j)}}
\bclr{\eta\,\mu_{H}^{\mathrm{v}}+\rho\,\mu_{H}^{\mathrm{e}}}f_{jH}(x)\\
&\qquad + \frac12\sum_{\substack{0\leq j\leq d,\\ H\in\cC(\bar{F}_j)}}\sum_{\substack{0\leq j'\leq d,\\ 
H'\in\cC(\bar{F}_{j'})}} \eta\,\sigma_{H,H'}^{\mathrm{v}}f_{jH,j'H'}(x) + \bigo(n^{-1}).       
}
Here, $f_{jH}$ and $f_{jH,j'H'}$ denote partial derivatives in directions $x_{jH}$ and $(x_{jH},x_{j'H'})$, respectively. 
We proceed by calculating the drift coefficients induced by the vertex changes and the edge changes and the diffusion coefficient induced by the edge changes.


\medskip\noindent\textit{$\bullet$ Drift coefficient induced by vertex changes.} 
The partial derivatives of $f$ are given by
\ba{
f_{0\,\wv}(x) &= \sum_{i=1}^d w_i x_{0\,\wv}^{w_i-1} x_{0\,\bv}^{b_i} x_{\-F_i} h_i(y),\\
f_{0\,\bv}(x) &= \sum_{i=1}^d b_i  x_{0\,\wv}^{w_i} x_{0\,\bv}^{b_i-1} x_{\-F_i} h_i(y),
}
and
\be{
f_{iH}(x) = x_{0\,\wv}^{w_i} x_{0\,\bv}^{b_i}h_i(y),\qquad \text{$1\leq i\leq d$, $H\in\cC(\bar{F}_i)$.}
}
It is immediate that $\mu_{\wv}^{\mathrm{v}}=0$ and $\mu_{\bv}^{\mathrm{v}}=0$. For the remaining sum, we have
\be{
\sum_{\substack{1\leq i\leq d,\\ H\in\cC(\bar{F}_i)}} \mu_{H}^{\mathrm{v}}f_{iH}(x)
= \sum_{i=1}^d x_{\wv}^{w_i} x_{\bv}^{b_i} h_i(y) \sum_{H\in\cC(\bar{F}_i)} \mu_{H}^{\mathrm{v}}  = 0
}
by Lemma~\ref{cor1}.


\medskip\noindent\textit{$\bullet$ Drift coefficient induced by edge changes.}
Similarly as before, $\mu_{\wv}^{\mathrm{e}}=0$ and $\mu_{\bv}^{\mathrm{e}}=0$. For the remaining sum we have
\bes{
\mel\sum_{\substack{1\leq i\leq d,\\ H\in\cC(\bar{F}_i)}} \mu_{H}^{\mathrm{e}}f_{iH}(x)\\
& = \sum_{i=1}^d x_{\wv}^{w_i}x_{\bv}^{b_i}\, h_i(y) \sum_{\{r,s\}\in E(\-F_i)}\sum_{H\in \cC(\bar{F}_i)}\bbcls{\bclr{s_{\mathrm{c},0}c^H_{rs} + s_{\mathrm{d},0}\^c^H_{rs}} 
\bclr{x_{H(r\not\sim s)}-x_{H} } \\
& \kern22em - \bclr{s_{\mathrm{c},1}c^H_{rs} + s_{\mathrm{d},1}\^c^H_{rs}}\,x_{H}},
}
where we use that if $H\in \cC(\bar{F}_i)$, then $E(H) = E(\bar{H}_i)$. Now, for instance,
\bes{
\mel \sum_{i=1}^d x_{\wv}^{w_i}x_{\bv}^{b_i}\, h_i(y)\sum_{\{r,s\}\in E(\bar{F}_i)}
\sum_{H\in \cC(\bar{F}_i)}c^H_{rs} \bclr{x_{H(r\not\sim s)}-x_{H} }\\
&= \sum_{i=1}^d x_{\wv}^{w_i}x_{\bv}^{b_i}\, h_i(y)\sum_{\{r,s\}\in E(\bar{F}_i)}
\sum_{H\in \cC(\bar{F}_i)}c^H_{rs}\bclr{y_{H(r\not\sim s)} -y_{H}}\\
&\qquad + \sum_{i=1}^d x_{\wv}^{w_i}x_{\bv}^{b_i}\, h_i(y)\sum_{\{r,s\}\in E(\bar{F}_i)}
\sum_{H\in \cC(\bar{F}_i)}c^H_{rs}\bclr{\Delta_{H(r\not\sim s)}-\Delta_{H}},
}
where, for any coloured graph $H$, we define
\be{
\Delta_{H} = x_H - y_H = t^{\inj}_H - \-{c}^{w(H)}(1-\bar{c})^{b(H)} t^{\inj}_{\bar H};
}
see Table~\ref{tab5}. By the binomial theorem, we have
\bes{
\mel\sum_{H\in \cC(\bar{F}_i)}c^H_{rs} \bclr{y_{H(r\not\sim s)} -y_{H}} \\
&  = \sum_{H\in \cC(\bar{F}_i)}c^H_{rs}\, x_{\wv}^{w(H)}x_{\bv}^{b(H)}\bclr{x_{\-F_i(r\not\sim s)} -x_{\-F_i}} \\ 
 &   = \bclr{x_{\wv}^2+x_{\bv}^2}(x_{\wv}+x_{\bv})^{(k_i-2)\vee0}\bclr{x_{\-F_i(r\not\sim s)} -x_{\-F_i}}.
}
Thus, using the fact that $x_{\wv}+x_{\bv} = 1$, we have
\bes{
\mel \sum_{i=1}^d x_{\wv}^{w_i}x_{\bv}^{b_i}\, h_i(y)
\sum_{\{r,s\}\in E(\-F_i)}\sum_{H\in \cC(\bar{F}_i)}c^H_{rs} 
\bclr{x_{\-H(r\not\sim s)}-x_{\-H}}\\
&= \sum_{i=1}^d\sum_{\{r,s\}\in E(\bar{F}_i)}(x_{\wv}^2+x_{\bv}^2) x_{\wv}^{w_i}x_{\bv}^{b_i}
\bclr{x_{\bar{F}_i(r\not\sim s)} - x_{\bar{F}_i}}\, h_i(y) + \Delta_{\mathrm{c},0,h}\\
&= \sum_{i=1}^d\sum_{\{r,s\}\in E(\bar{F}_i)}(x_{\wv}^2+x_{\bv}^2)\bclr{y_{F_i(r\not\sim s)} - y_{F_i}}\, h_i(y)
+ \Delta_{\mathrm{c},0,h},\\
}
where $\Delta_{\mathrm{c},0,h}$ is as defined in Table~\ref{tab5}. Doing similar calculations for the other terms as well, we arrive at
\bes{
\mel\sum_{\substack{0\leq i\leq d,\\ H\in\cC(\bar{F}_i)}} 
\bclr{\eta\,\mu_{H}^{\mathrm{v}}+ \rho\,\mu_{H}^{\mathrm{e}}}f_{iH}(x)\\
&= \rho\sum_{i=1}^d \bbbclc{\sum_{\{r,s\} \in E(F_i)}\bbcls{s_{\mathrm{c},0}\bclr{x_{\wv}^2+x_{\bv}^2}
+2s_{\mathrm{d},0}x_{\wv}x_{\bv}}\bclr{y_{F_i(r\not\sim s)} - y_{F_i}}\\
&\kern15em - k_i\bbcls{s_{\mathrm{c},1}\bclr{x_{\wv}^2+x_{\bv}^2}+2s_{\mathrm{d},1}x_{\wv}x_{\bv}} y_{F_i}}h_i(y)\\
&\qquad + \rho\,\bclr{s_{\mathrm{c},0}{\Delta_{\mathrm{c},0,h}} + s_{\mathrm{d},0}{\Delta_{\mathrm{d},0,h}} 
+ s_{\mathrm{c},1}{\Delta_{\mathrm{c},1,h}} + s_{\mathrm{d},1}{\Delta_{\mathrm{d},1,h}}}.
}


\medskip\noindent\textit{$\bullet$ Diffusion coefficient induced by vertex changes.} 
The second partial derivatives of $f$ are as follows:
\ba{
f_{0\,\wv,0\,\wv}(x) 
&= \sum_{i=1}^d (w_i)_2 x_{0\,\wv}^{w_i-2} x_{0\,\bv}^{b_i} x_{\-F_i} h_i(y)
+ \sum_{i=1}^d \sum_{j=1}^d w_iw_j x_{0\,\wv}^{w_i+w_j-2} x_{0\,\bv}^{b_i+b_j}x_{\-F_i} x_{\-F_j} h_{i,j}(y), \\
f_{0\,\wv,0\,\bv}(x) 
&= \sum_{i=1}^d w_ib_i x_{0\,\wv}^{w_i-1} x_{0\,\bv}^{b_i-1} x_{\-F_i} h_i(y)
+ \sum_{i=1}^d \sum_{j=1}^d w_ib_i x_{0\,\wv}^{w_i-1} x_{0\,\bv}^{b_i-1}x_{\-F_i}x_{\-F_j} h_{i,j}(y),\\
f_{0\,\bv,0\,\bv}(x) &= \sum_{i=1}^d (b_i)_2 x_{0\,\wv}^{w_i} x_{0\,\bv}^{b_i-2} x_{\-F_i} h_i(y) 
+ \sum_{i=1}^d \sum_{j=1}^d b_ib_j x_{0\,\wv}^{w_i+w_j} x_{0\,\bv}^{b_i+b_j-2} x_{\-F_i}x_{\-F_j} h_{i,j}(y).
}
For $1\leq i\leq d$ and $H\in\cC(\bar{F}_i)$, 
\ba{
f_{0\,\wv, iH}(x) &= w_i x_{0\,\wv}^{w_i-1} x_{0\,\bv}^{b_i} h_i(y)
+ \sum_{j=1}^d w_j x_{0\,\wv}^{w_i+w_j-1} x_{0\,\bv}^{b_i+b_j} x_{\-F_j} h_{j,i}(y),\\
f_{0\,\bv, iH}(x) &= b_i x_{0\,\wv}^{w_i}x_{0\,\bv}^{b_i-1} h_i(y)
+ \sum_{j=1}^d b_j x_{0\,\wv}^{w_i+w_j} x_{0\,\bv}^{b_i+b_j-1} x_{\-F_j} h_{j,i}(y).
}
Finally, for $1\leq i\leq d$ and $H\in\cC(\bar{F}_i)$, and $1\leq j\leq d$ and $H'\in\cC(\bar{F}_j)$, we have
\bes{
f_{iH, jH'}(x) &= x_{0\,\wv}^{w_i+w_j} x_{0\,\bv}^{b_i+b_j}h_{i,j}(y).
}
Now,
\bes{
\mel\sum_{\substack{0\leq j\leq d,\\ H\in\cC(\bar{F}_j)}}\sum_{\substack{0\leq j'\leq d,\\ 
H'\in\cC(\bar{F}_{j'})}} \sigma_{H,H'}^{\mathrm{v}}f_{jH,j'H'}(x)\\
& = \sum_{\substack{1\leq j\leq d,\\ H\in\cC(\bar{F}_j)}}\sum_{\substack{1\leq j'\leq d,\\ 
H'\in\cC(\bar{F}_{j'})}} \sigma_{H,H'}^{\mathrm{v}}f_{jH,j'H'}(x)\\
&\quad + 2\sum_{\substack{1\leq j\leq d,\\ H\in\cC(\bar{F}_j)}} \sigma_{\wv,H}^{\mathrm{v}}f_{0\,\wv,jH}(x)
 + 2\sum_{\substack{1\leq j\leq d,\\ H\in\cC(\bar{F}_j)}} \sigma_{\bv,H}^{\mathrm{v}}f_{0\,\bv,jH}(x)\\
&\quad + \sigma_{\wv,\wv}^{\mathrm{v}}f_{0\,\wv,0\,\wv}(x) 
+ \sigma_{\bv,\bv}^{\mathrm{v}}f_{0\,\wv,0\,\wv}(x)
+ 2\sigma_{\wv,\bv}^{\mathrm{v}}f_{0\,\wv,0\,\bv}(x).
}
By Lemma~\ref{cor1}, 
\bes{
\mel\sum_{\substack{1\leq j\leq d,\\ H\in\cC(\bar{F}_j)}}\sum_{\substack{1\leq j'\leq d,\\ 
H'\in\cC(\bar{F}_{j'})}} \sigma_{H,H'}^{\mathrm{v}}f_{jH,j'H'}(x) \\
& = \sum_{1\leq j\leq d}\sum_{1\leq j'\leq d} x_{0\,\wv}^{w_i+w_j} x_{0\,\bv}^{b_i+b_j}h_{i,j}(y)
\sum_{H\in\cC(\bar{F}_j)}\sum_{H'\in\cC(\bar{F}_{j'})}\sigma_{H,H'}^{\mathrm{v}} = 0.
}
Similarly, we have
\be{
\sum_{\substack{1\leq j\leq d,\\ H\in\cC(\bar{F}_j)}} \sigma_{\wv,H}^{\mathrm{v}}f_{0\,\wv,jH}(x) = 0,
\qquad
\sum_{\substack{1\leq j\leq d,\\ H\in\cC(\bar{F}_j)}} \sigma_{\bv,H }^{\mathrm{v}}f_{0\,\bv,jH}(x) = 0.
}
For the remaining terms, it is easy to deduce that
\be{
\sigma_{\wv,\wv}^{\mathrm{v}} = \sigma_{\bv,\bv}^{\mathrm{v}}  = 2 x_{\whiteblackedge}= -\sigma_{\wv,\bv}^{\mathrm{v}}.
}
Thus,
\ba{
\mel\sigma_{\wv,\wv}^{\mathrm{v}}f_{0\,\wv,0\,\wv}(x)\\
&= 2x_{\whiteblackedge} x_{\wv}^{-2}\sum_{i=1}^d (w_i)_2y_{F_i} h_i(y)
+ 2x_{\whiteblackedge} x_{\wv}^{-2}\sum_{i=1}^d\sum_{j=1}^d w_iw_j  y_{F_i} y_{F_j} h_{i,j}(y),\\
\mel\sigma_{\wv,\bv}^{\mathrm{v}} f_{0\,\wv,0\,\bv}(x)\\
&= -2x_{\whiteblackedge} x_{\wv}^{-1}x_{\bv}^{-1}\sum_{i=1}^d w_ib_i y_{F_i} h_i(y)
- 2x_{\whiteblackedge}x_{\wv}^{-1}x_{\bv}^{-1}\sum_{i=1}^d\sum_{j=1}^d w_ib_j  y_{F_i} y_{F_j} h_{i,j}(y),\\
\mel\sigma_{\bv,\bv}^{\mathrm{v}}f_{0\,\bv,0\,\bv}(x)\\
&= 2x_{\whiteblackedge}x_{\bv}^{-2}\sum_{i=1}^d (b_i)_2 y_{F_i} h_i(y)
+ 2x_{\whiteblackedge} x_{\bv}^{-2}\sum_{i=1}^d\sum_{j=1}^d b_ib_j y_{F_i} y_{F_j} h_{i,j}(y).
}
Replacing $x_{\whiteblackedge}$ by $y_{\whiteblackedge}$, and adding the difference, we get, for example,
\bes{
\mel\sigma_{\wv,\wv}^{\mathrm{v}}f_{0\,\wv,0\,\wv}(x)\\
&= 2y_{\whiteblackedge} x_{\wv}^{-2}\sum_{i=1}^d (w_i)_2y_{F_i} h_i(y)
+ 2y_{\whiteblackedge} x_{\wv}^{-2}\sum_{i=1}^d\sum_{j=1}^d w_iw_j  y_{F_i} y_{F_j} h_{i,j}(y) + \Delta_{\wv,\wv},
}
where again $\Delta_{\wv,\wv}$ is defined in Table~\ref{tab5}. The other terms can be rewritten in a similar way via analogous quantities $\Delta_{\wv,\bv}$ and~$\Delta_{\bv,\bv}$. Collecting and rearranging all terms, and recalling that $x_{\wv} = y_{\wv}$ and $x_{\bv}=y_{\bv}$, we arrive at~\eqref{34}.
\end{proof}


\section{Proof of the main results}
\label{sec14}

We are now ready to prove our main results. In Section~\ref{sec15} we prove pathwise existence and uniqueness of the limiting process, and use this to prove Theorem~\ref{thm2}. We also derive the dynamics governing the subgraph density of the limiting processes. In Section~\ref{sec16} we show that the distance (under the metric that induces the path topology of convergence in measure) between the finite-$n$ process and its projection converges to zero in mean. In Section~\ref{62} we prove Theorem~\ref{thm2}.


\subsection{Proof of Theorem~\ref{thm1} and evolution of subgraph densities} 
\label{sec15}

We first prove Lemma~\ref{lem8}, that is, there is a pathwise unique solution to \eqref{2}. Afterwards we use this to prove Theorem~\ref{thm1}.

\begin{lemma}[Existence] 
\label{lem8}
There exists a Markov process $(\kappa_t,q_t)_{t\geq0}$ that is the pathwise unique solution to \eqref{2} with initial conditions $(\kappa_0,q_0)$, where $\kappa_0$ is a graphon and $q_0\in[0,1]$. Moreover, $\kappa_t$ is a graphon and $q_t \in[0,1]$ for all $t$.
\end{lemma}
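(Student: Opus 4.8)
The plan is to reduce \eqref{2} to the autonomous pair \eqref{6} and then recover the graphon by integration, exactly as announced after \eqref{6}. First I would observe that, for each fixed $(x,y) \in [0,1]^2$, the second line of \eqref{2} is a scalar ODE $\dot r_t = \rho V(r_t,q_t)$ with $r_0 = \kappa_0(x,y)$, and that $r \mapsto V(r,q)$ is affine: writing $V(r,q) = A(q) - (A(q)+B(q))r$ with $A(q) = (q^2+(1-q)^2)s_{\mathrm{c},0}+2q(1-q)s_{\mathrm{d},0} \geq 0$ and $B(q) = (q^2+(1-q)^2)s_{\mathrm{c},1}+2q(1-q)s_{\mathrm{d},1} \geq 0$, this ODE is solved explicitly by an integrating factor once the path $q = (q_t)_{t\geq 0}$ is known. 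The resulting $\kappa_t(x,y)$ is jointly measurable in $(x,y)$ and symmetric (it depends on $(x,y)$ only through $\kappa_0(x,y)$), takes values in $[0,1]$ because $V(1,q) = -B(q) \leq 0$ and $V(0,q) = A(q) \geq 0$ confine the flow to $[0,1]$, and — by linearity of $V$ in its first argument and Fubini, as already noted in the text — $p_t := \int_{[0,1]^2}\kappa_t(x,y)\,\dd x\dd y$ solves the first line of \eqref{6} with $p_0 = \int_{[0,1]^2}\kappa_0$. Hence the entire statement follows once I produce a pathwise unique, $[0,1]^2$-valued solution $(p_t,q_t)_{t\geq 0}$ of \eqref{6}.

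For \eqref{6} I would treat existence and uniqueness separately and combine them via the Yamada--Watanabe theorem. For existence, the coefficients $(p,q)\mapsto\rho V(p,q)$ and $(p,q)\mapsto\sqrt{2\eta\,p\,q(1-q)}$ are continuous and bounded on the compact square $[0,1]^2$; extending them continuously and boundedly to $\R^2$, Skorokhod's existence theorem (equivalently, existence for the associated martingale problem with bounded continuous coefficients) yields a weak solution, and a boundary analysis confines it to $[0,1]^2$: the sign of $V$ handles the edges $p\in\{0,1\}$, while at $q\in\{0,1\}$ the diffusion coefficient vanishes like $\sqrt{q}$, resp.\ $\sqrt{1-q}$, so by Feller's test $0$ and $1$ are absorbing and $q$ never leaves $[0,1]$ (it is in fact a bounded martingale). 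Together with pathwise uniqueness this gives a unique strong solution, which is a time-homogeneous Markov process; feeding the resulting $q$ into the integrating-factor formula produces $\kappa_t$, a graphon for every $t$, and setting $c_t\equiv q_t$ gives the $\cW_0$-valued process claimed.

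The crux is pathwise uniqueness of \eqref{6}. Here I would exploit that the $p$-equation carries no noise: solving its linear ODE shows that any solution satisfies $p_t = \Phi_t(q)$ for an explicit causal functional $\Phi$ of the path of $q$ which is Lipschitz in the sense $|\Phi_t(q) - \Phi_t(q')| \leq K_T\int_0^t|q_s - q'_s|\,\dd s$ on $[0,T]$ (since $A,B$ are Lipschitz and bounded on $[0,1]$), and moreover $p_t \geq p_0\ee^{-c_Tt} > 0$ on $[0,T]$ whenever $p_0 > 0$ (the degenerate case $p_0 = 0$ reducing to this, as then either $p\equiv 0$ and $q\equiv q_0$ trivially, or $p$ becomes positive immediately). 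Thus it suffices to prove pathwise uniqueness for the single path-dependent equation $\dd q_t = \sqrt{2\eta\,\Phi_t(q)\,q_t(1-q_t)}\,\dd W_t$. For two solutions $q,q'$ driven by the same $W$ with the same initial value, $q-q'$ has zero drift, so the Yamada--Watanabe technique applies: with the usual approximations $\phi_n\uparrow|\cdot|$ and $\psi_n = \phi_n''$ supported near $0$ with $\psi_n(u)u\leq 2/n$, one gets $\IE[\phi_n(q_t - q'_t)] \leq \tfrac12\IE\int_0^t\psi_n(|q_s - q'_s|)\,|\sigma_s - \sigma'_s|^2\,\dd s$, and the bound $|\sigma_s - \sigma'_s|^2 \leq 2\eta\,|p_sq_s(1-q_s) - p'_sq'_s(1-q'_s)| \leq \tfrac{\eta}{2}|p_s - p'_s| + 2\eta|q_s - q'_s|$ splits the right-hand side into a genuine Yamada--Watanabe term (which vanishes as $n\to\infty$) and a feedback term controlled through $\Phi$ by $\int_0^s|q_u - q'_u|\,\dd u$, to be closed off by a Grönwall argument (on a short interval, then iterated). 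Reconciling the non-Lipschitz square-root diffusion coefficient with the $p\to q$ feedback is the step I expect to be the main technical obstacle; everything else is explicit integration or standard SDE theory.
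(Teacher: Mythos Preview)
Your approach is essentially the same as the paper's: reduce to the autonomous pair \eqref{6}, obtain a weak solution via Skorohod and pathwise uniqueness via Yamada--Watanabe, then recover $\kappa_t(x,y)$ by integrating the linear ODE pointwise in $(x,y)$ and checking symmetry, measurability, and confinement to $[0,1]$. The paper is terser---it simply cites \cite[Theorem~1]{YW71} for \eqref{35} and asserts the hypotheses are ``easily verified''---so your explicit reduction to a one-dimensional path-dependent SDE and your flagging of the $p\!\to\!q$ feedback as the delicate point are an elaboration of the same argument rather than a different route.
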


\begin{proof}
 Let $\kappa_0$ be a graphon and $q_0\in(0,1)$. Let $W = (W_t)_{t \ge 0}$ be a standard Brownian motion on $\R$, and the function $V:[0,1]\times [0,1] \rightarrow [0,1]$ be given in \eqref{3}. Consider the coupled equations on $[0,1]\times[0,1]$ given by
\ben{ \label{35}
  \left\{ 
  \begin{aligned}
  \dd{p_t} &= \rho\, V\bclr{p_t,q_t}\dd t,\\
  \dd{q_t} &= \sqrt{2\eta\, p_t\,q_t(1-q_t)} \dd{W_t},
  \end{aligned}
  \right.
}
with initial values 
\be{
  p_0 = \int_{[0,1]^2}\dd x \dd y \kappa_0(x,y),
  \qquad
  q_0 = \int_{[0,1]}\dd x c_0(x).
}
Existence and uniqueness of  a  unique strong solution $(p_t,q_t)_{t\geq0}$ to \eqref{35}  will follow from \cite{S65} and  \cite[Theorem~1]{YW71}  (Conditions~$(i)$ and $(ii)$ of that theorem are easily verified). Moreover, it follows from \cite[Proposition~2]{YW71} that this bivariate process has the strong Markov property. From the integral representation 
\be{
  q_t = q_0 + \int_0^t \sqrt{2\eta\, p_s\,q_s(1-q_s)}\dd{W_s},
}
it is also clear that $q_t\in[0,1]$ for all $t\geq0$. Now, for each realisation of $(W_t)_{t \geq 0}$ we can consider $q_t$ as  given by the above. Then as for each $x$ and $y$, since $V(0,q) \leq 0$ and $V(1,q) \geq 0$, and that $V(\cdot,q)$ is linear in between there is a unique solution to 
\ben{\label{36}
  \kappa_t(x,y) = \kappa_0(x,y) + \int_0^t \rho\, V\bclr{\kappa_s(x,y),q_{s}}\dd s.
}
Thus we may obtain that $\kappa_t(x,y)\in[0,1]$ for all $t\geq0$, $x,y\in[0,1]$ explicitly as function of $p_t$ and $q_t$ (though it does not give insight). Clearly $\kappa_t(x,y)$ is symmetric in $x$ and $y$ as $\kappa_0$ is a graphon. Measurability of $\kappa_t$ follows by approximating $\kappa_0$ by step functions, and then using stability of initial conditions, Gr\"onwall's inequality, linearity of $V$ and uniqueness of the solution of \eqref{36}.
\end{proof}

\begin{proof}[Proof of Theorem~\ref{thm1}] Assertion $(a)$ follows from Lemma~\ref{lem8}. Suppose that $(\kappa_0,q_0)$ and $(\kappa_0',q_0')$ are as in Assertion $(b)$. Then there exist measure-preserving maps $\lambda$ and $\lambda'$ on $[0,1]$ such that 
\be{
  \kappa_0\bclr{\lambda(x),\lambda(y)} = \kappa_0'\bclr{\lambda'(x),\lambda'(y)}, \qquad x,y\in[0,1].
} 
Since $p_t$ remains unchanged under such maps and $q_0=q_0'$, it is clear that we can take $q'_t=q_t$ and $p'_t=p_t$. From the representation in \eqref{36} and the linearity of $V$, it is therefore clear that  
\ben{\label{37}
  \kappa_t(\lambda(x),\lambda(y)) = \kappa_t'(\lambda'(x),\lambda'(y)), \qquad x,y \in[0,1],
}
for all $t \geq 0$, which proves Assertion $(b)$. The first part of Assertion $(c)$ follows in part from the fact that $(\kappa_t,c_t)$ are unique strong solutions to \eqref{2}. Let $\{{\cal F}_t\}_{t \geq 0}$ the filtration generated by $\{ (\kappa_{s},c_{s})\colon 0 \leq s \leq t \}$. For any $t \geq 0$ and any measure-preserving map $\lambda$ on $[0,1]$, we have  $(\~\kappa^\lambda_t,\~c^\lambda_t) = (\~\kappa_t, \~c_t)$ (recall that $\~\kappa$ denotes the equivalence class associated with $\kappa$), and from \eqref{37} we have that $(\kappa^\lambda_{t},c^\lambda_{t})$ has the same law as $(\kappa_{t},c_{t})$.  Thus, by the fact that $(\kappa_t,c_t)_{t \geq 0}$ is a Markov process, for any $0<s<t$ the conditional law of $(\~\kappa_t, \~c_t)$ given ${\cal F}_s$ depends on $((\kappa_t,c_t), (\kappa_s,c_s))$ only.  From \eqref{37}, we also have that $((\kappa^\lambda_{t},c^\lambda_{t}), (\kappa^\lambda_{s},c^\lambda_{s}))$ has the same joint law as $ ((\kappa_{t},c_{t}), (\kappa_{s},c_{s}))$ for any measure-preserving map $\lambda$ on $[0,1]$. So, for any $0<s<t$ the conditional law of $(\~\kappa_t, \~c_t)$ given $(\~\kappa_s, \~c_s)$ is the same as the conditional law of $(\~\kappa_t, \~c_t)$ given $(\~\kappa^\lambda_s, \~c^\lambda_s)$. Consequently, the conditional law of $(\~\kappa_t, \~c_t)$ given ${\cal F}_s$ is the same as the the conditional law of $(\~\kappa_t, \~c_t)$ given $(\~\kappa_s, \~c_s)$, which implies that  $(\~\kappa_t, \~c_t)_{t \geq 0}$ is a Markov process. 
\end{proof}

We conclude this section by identifying the stochastic differential equations for the subgraph densities of the limiting process. This will be needed in the proof of convergence of the projection to the limiting process (Lemma~\ref{lem16}), and is an essential ingredient in the proof of Theorem~\ref{thm2}.

\begin{lemma}[Flow of coloured subgraph densities] 
\label{lem9}
Let $F$ be a coloured subgraph on $k$ vertices, of which $w$ are white and $b$ are black, and let $(\kappa_t,q_t)$ be the solution to \eqref{2} with initial conditions $(\kappa_0,q_0)$. Then the coloured subgraph density $t_{F}(\kappa_t,q_t)$ satisfies the stochastic differential equation
\ban{ \label{38}
&\dd{t_{F}(\kappa_t,q_t)}  \nonumber \\
&\quad=\rho\, \bbbcls{\sum_{\{r,s\}\in E(F)}\bcls{s_{\mathrm{c},0}(q_t^2+(1-q_t)^2) 
+ 2s_{\mathrm{d},0}q_t(1-q_t)}(t_{F(r\not\sim s)}(\kappa_t,q_t)-t_F(\kappa_t,q_t))  \nonumber \\
&\kern8em - e(F)\bcls{s_{\mathrm{c},1}(q_t^2+(1-q_t)^2)+2s_{\mathrm{d},1}q_t(1-q_t)}t_F(\kappa_t,q_t)}\dd t  \nonumber  \\
&\qquad + \eta\,\bclr{(w)(w-1)q_t^{-1}(1-q_t) - 2wb + (b)(b-1)q_t(1-q_t)^{-1}}\,t_{F}(\kappa_t,q_t)\,p_t \dd t  \nonumber \\
&\qquad + \bclr{wq_t^{-1} - b (1-q_t)^{-1}}t_{F}(\kappa_t,q_t) \sqrt{2\eta\, p_t q_t (1-q_t)}\dd{W_t},
}
where $(W_t)_{t \geq 0}$ is the Brownian motion from \eqref{2}. The statement is simultaneously true for any finite collection of coloured subgraphs with respect to the same Brownian motion.
\end{lemma}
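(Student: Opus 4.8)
The plan is to reduce the statement to a one-dimensional Itô computation, exploiting that the limiting process lives in $\cW_0$, so that its colouring is the constant $q_t$. For a coloured graph $F$ with $w$ white and $b$ black vertices, and $\bar F$ its underlying uncoloured graph, the definition \eqref{13} gives the exact factorisation $t_F(\kappa_t,q_t)=g(q_t)\,Y_t$ with $g(q)=q^{w}(1-q)^{b}$ and $Y_t=t_{\bar F}(\kappa_t)$ an ordinary (uncoloured) homomorphism density; likewise $t_{F(r\not\sim s)}(\kappa_t,q_t)=g(q_t)\,t_{\bar F(r\not\sim s)}(\kappa_t)$ since deleting an edge changes neither the colouring nor $w$ and $b$. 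So it suffices to compute $\dd{(g(q_t)Y_t)}$.

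First I would show $t\mapsto Y_t$ is absolutely continuous and compute its derivative. By Lemma~\ref{lem8}, for each $(x,y)$ the map $t\mapsto\kappa_t(x,y)$ solves $\dot\kappa_t(x,y)=\rho\,V(\kappa_t(x,y),q_t)$, with $\kappa_t$ jointly measurable and $V$ bounded. Differentiating $Y_t=\int_{[0,1]^{k}}\prod_{\{i,j\}\in E(\bar F)}\kappa_t(x_i,x_j)\,\dd{x_1}\cdots\dd{x_k}$ under the integral sign (justified by dominated convergence, using $\kappa_t\in[0,1]$ and $|V|\le\mathrm{const}$), together with the fact that $p\mapsto V(p,q)$ is affine, $V(p,q)=A(q)(1-p)-B(q)p$ with $A(q)=s_{\mathrm{c},0}(q^2+(1-q)^2)+2s_{\mathrm{d},0}q(1-q)$ and $B(q)=s_{\mathrm{c},1}(q^2+(1-q)^2)+2s_{\mathrm{d},1}q(1-q)$, yields
\[
  \dot Y_t=\rho\sum_{\{r,s\}\in E(\bar F)}\Bigl[A(q_t)\bigl(t_{\bar F(r\not\sim s)}(\kappa_t)-t_{\bar F}(\kappa_t)\bigr)-B(q_t)\,t_{\bar F}(\kappa_t)\Bigr],
\]
because $\int\kappa_t(x_r,x_s)\prod_{\{i,j\}\neq\{r,s\}}\kappa_t\,\dd{x}=t_{\bar F}(\kappa_t)$ while $\int\prod_{\{i,j\}\neq\{r,s\}}\kappa_t\,\dd{x}=t_{\bar F(r\not\sim s)}(\kappa_t)$. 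In particular $Y_\cdot$ is of locally finite variation.

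Then I would apply Itô's formula to $g(q_t)$ and Itô's product rule to $g(q_t)Y_t$. From $\dd{q_t}=\sqrt{2\eta\,p_tq_t(1-q_t)}\,\dd{W_t}$ one has $\dd{\langle q\rangle_t}=2\eta\,p_tq_t(1-q_t)\,\dd t$; a direct computation gives $g'(q)=g(q)(wq^{-1}-b(1-q)^{-1})$ and $g''(q)=g(q)(w(w-1)q^{-2}-2wb\,q^{-1}(1-q)^{-1}+b(b-1)(1-q)^{-2})$, so
\[
  \dd{g(q_t)}=g(q_t)\bigl(wq_t^{-1}-b(1-q_t)^{-1}\bigr)\dd{q_t}+\eta\,p_t\,g(q_t)\bigl(w(w-1)q_t^{-1}(1-q_t)-2wb+b(b-1)q_t(1-q_t)^{-1}\bigr)\dd t.
\]
Since $Y_\cdot$ has finite variation, $\langle g(q_\cdot),Y_\cdot\rangle\equiv 0$, hence $\dd{(g(q_t)Y_t)}=g(q_t)\dd{Y_t}+Y_t\,\dd{g(q_t)}$. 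Substituting $\dot Y_t$ and the two contributions of $\dd{g(q_t)}$, and using $g(q_t)t_{\bar F(r\not\sim s)}(\kappa_t)=t_{F(r\not\sim s)}(\kappa_t,q_t)$, $g(q_t)Y_t=t_F(\kappa_t,q_t)$ and $e(F)=|E(\bar F)|$, the terms collect precisely into the three drift lines and the single martingale line of \eqref{38}. The final assertion for a finite family of subgraphs is then immediate, since the whole computation is carried out pathwise with respect to the one Brownian motion $W$ of \eqref{2}.

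The one point needing care, and which I expect to be the only non-routine step, is the behaviour at $q_t\in\{0,1\}$, where $q_t^{-1}$, $(1-q_t)^{-1}$ and their squares are singular. Each such factor, however, appears multiplied by $t_F(\kappa_t,q_t)=g(q_t)Y_t$, and $q_t^{-j}g(q_t)=q_t^{\,w-j}(1-q_t)^{b}$ (and symmetrically in $1-q_t$) stays bounded whenever the accompanying integer coefficient ($w$, $w(w-1)$, $b$ or $b(b-1)$) is non-zero, and is read as $0$ otherwise; equivalently one first derives \eqref{38} on $[0,\tau)$ with $\tau=\inf\{t\colon q_t\in\{0,1\}\}$ and extends by continuity, using that $q_t$ is absorbed at $\tau$ so both sides are constant afterwards. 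Everything else is bookkeeping.
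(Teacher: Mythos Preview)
Your proposal is correct and follows essentially the same route as the paper: both exploit the factorisation $t_F(\kappa_t,q_t)=q_t^{w}(1-q_t)^{b}\,t_{\bar F}(\kappa_t)$, the affine structure of $V(\cdot,q)$, and It\^o's formula for the $q_t^{w}(1-q_t)^{b}$ factor. The only organisational difference is that the paper applies It\^o pointwise to $q_t^{w}(1-q_t)^{b}\prod_{i\sim j}\kappa_t(x_i,x_j)$ for fixed $(x_1,\dots,x_k)$ and then integrates over $[0,1]^k$ via Fubini, whereas you integrate first to obtain $Y_t=t_{\bar F}(\kappa_t)$, differentiate under the integral sign, and then use the It\^o product rule; your extra paragraph on the absorbing boundary $q_t\in\{0,1\}$ is a welcome clarification that the paper leaves implicit.
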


\begin{proof}
Let $F$ be a coloured subgraph on $k$ vertices, of which $w$ are white and $b$ are black, and let $(\kappa_t,q_t)$ be the solution to \eqref{2} with initial conditions $(\kappa_0,q_0)$. Then
\be{ 
t_F(\kappa_t,q_t) = q_t^w(1-q_t)^b t_{\-F}(\kappa) = \int_{[0,1]^k} q_t^w(1-q_t)^b \prod_{i\stackrel{F}{\sim}j}
\kappa_t(x_i,x_j)\prod_{i=1}^k \dd{x_i},
}
Fix $\{x_1,\ldots, x_n\} \in [0,1]$. Then, by using the Ito formula (see, for example, \cite[Section~II.7]{P05}), we obtain, for $t \geq 0$,
\besn{\label{39}
&q_t^w(1-q_t)^b \prod_{i\stackrel{F}{\sim}j} \kappa_t(x_i,x_j)  \\  
&\quad= q_0^w(1-q_0)^b \prod_{i\stackrel{F}{\sim}j} \kappa_0 (x_i,x_j)  \\ 
&\qquad +  \int_0^t \bclr{wq_s^{w-1}(1-q_s)^b - b q_s^w(1-q_s)^{b-1}} 
\prod_{i\stackrel{F}{\sim}j} \kappa_t(x_i,x_j)  \sqrt{2\eta\, p_s q_s (1-q_s)}\dd{W_s}  \\
&\qquad + \int_0^t\left[\rho q_s^w(1-q_s)^b\sum_{\{u,v\}\in E(F)} \int_{[0,1]^k} 
\prod_{i\stackrel{\mathclap{F(u\not\sim v)}}{\sim}j}
\kappa(x_i,x_j) V\bclr{\kappa_s(x_u,x_v),q_s}\right]\dd s  \\
&\qquad + \int_0^t\bclr{w(w-1)q_s^{w-2}(1-q_s)^b - 2wbq_s^{w-1}(1-q_s)^{b-1} 
+ b(b-1)q_s^w(1-q_s)^{b-2}} \\[2ex] 
&\qquad \qquad \times \prod_{i\stackrel{F}{\sim}j} \kappa_t(x_i,x_j)  \eta\, p_s q_s (1-q_s)\dd s.
}
For each fixed $t \geq 0$, by integrating both sides of \eqref{39} with respect to $\prod_{i=1}^k \dd{x_i}$ over $[0,1]^k$, and using the linearity of $V$ and Fubini's theorem, we obtain \eqref{38}.
\end{proof}


\subsection{Distance between the finite coloured graph process and its projection}
\label{sec16}

Our main result in this section is the following lemma, which shows that the coloured graph process $G^n$, respectively, its graphon embedding $\kappa^n$ in $\cW$, is close to its projection $\cP(G^n)\in\cW_0$ in the path topology of convergence in measure. Recall the metric $\dMZ$ from \eqref{17}.

\begin{lemma}[Convergence to projection]
\label{lem10} 
Let $X^n = (X^n_t)_{t\geq 0} = (\kappa^n_t,c^n_t)_{t\geq 0}$ be the graphon embedding of the coloured graph process $G^n$. If\/ $\lim_{n\to\infty} \dsub(X^n_0,\kappa_0)$ $=0$ and\/ $\liminf_{n\to\infty}\noverlap(X^n_0)>0$, then 
\ben{
\label{40}
\lim_{n\to\infty} \EE\cls{\dMZ(\kappa^n,\cP(\kappa^n))} = 0,
} 
\end{lemma}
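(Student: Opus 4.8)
The plan is to reduce the claim to a single‑subgraph, single‑time estimate and then establish colour \emph{homogenisation} at each positive time by a duality argument run over a short backward window. Since $\dsub\le 1$ on $\~\cW$ and $\int_0^\infty \ee^{-t}\dd t=1$, Fubini gives
\[
  \EE\bcls{\dMZ\bclr{\kappa^n,\cP(\kappa^n)}} = \int_0^\infty \EE\bcls{1\wedge\dsub\bclr{\kappa^n_t,\cP(\kappa^n_t)}}\,\ee^{-t}\dd t,
\]
so by dominated convergence it suffices to prove $\EE[\dsub(\kappa^n_t,\cP(\kappa^n_t))]\to0$ for each fixed $t>0$ --- the single point $t=0$ is Lebesgue‑null, which is exactly what makes the argument insensitive to the initial collapse. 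Expanding $\dsub$ as $\sum_i 2^{-i}|t_{F_i}(G^n_t)-t_{F_i}(\cP(G^n_t))|$ and using dominated convergence once more in $i$, and recalling that $t_F(\cP(G^n_t))=(q^n_t)^{w}(1-q^n_t)^{b}\,t_{\bar F}(G^n_t)$, where $w,b$ are the numbers of white and black vertices of $F$, $k=w+b$, $\bar F$ the uncoloured shadow, and $q^n_t$ the fraction of colour‑$1$ vertices of $G^n_t$, it is enough to show, for each fixed coloured graph $F$ and each fixed $t>0$, that $\EE\babs{t_F(G^n_t) - (q^n_t)^{w}(1-q^n_t)^{b}\,t_{\bar F}(G^n_t)}\longto 0$.

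Fix $t>0$ and let $\delta_n=n^{-2/3}$, so that $1/n\ll\delta_n\ll n^{-1/2}$. Because the edge‑switching rates are bounded constants, on $[t-\delta_n,t]$ each pair of vertices changes its edge status with probability $\bigo(\delta_n)$; this lets us couple the dynamics on $[t-\delta_n,t]$ with the \emph{voter model run on the frozen graph} $G^n_{t-\delta_n}$ started from the colouring $c^n_{t-\delta_n}$, in such a way that $\EE|t_{\bar F}(G^n_t)-t_{\bar F}(G^n_{t-\delta_n})|=\bigo(\delta_n)$ and the time‑$t$ colourings of the two processes disagree, at each fixed vertex, only with probability $\bigo(n\delta_n^2)=\bigo(n^{-1/3})$ --- trace the dual path back over the window: it makes $\bigo(n\delta_n)$ steps and each step uses a ``wrong'' edge only with probability $\bigo(\delta_n)$. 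For the frozen‑graph voter model the classical duality with coalescing random walks applies directly and is now \emph{decoupled from the colours}, since $G^n_{t-\delta_n}$ is $\mathcal F_{t-\delta_n}$‑measurable: conditionally on $\mathcal F_{t-\delta_n}$, the time‑$t$ colours at vertices $\ell_1,\dots,\ell_k$ have the law of $\bclr{c^n_{Y_1}(t-\delta_n),\dots,c^n_{Y_k}(t-\delta_n)}$, where $Y_1,\dots,Y_k$ are coalescing continuous‑time random walks on $G^n_{t-\delta_n}$ started at $\ell_1,\dots,\ell_k$ and run for time $\delta_n$. By Lemma~\ref{lem11} --- together with an a priori lower bound on $\noverlap(G^n_s)$, $s\le t$, inherited from $\liminf_n\noverlap(G^n_0)>0$ under the bounded edge dynamics, the degenerate regime where the edge density itself vanishes being trivial since then $t_F(G^n_t)$ and $t_F(\cP(G^n_t))$ both tend to $0$ for edge‑containing $F$ and agree for edgeless $F$ --- this walk mixes in time $\bigo(1/n)\ll\delta_n$, so for every ``spread‑out'' starting tuple $\vec\ell$ the endpoints $Y_1,\dots,Y_k$ are, with probability $1-\lito(1)$, pairwise distinct and each approximately uniform on $[n]$ (coalescence has probability $\bigo(\delta_n)$), while the non‑spread‑out tuples form an $\lito(1)$‑fraction. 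Hence $\IP(c^n_{Y_a}(t-\delta_n)=c^F_a\mid\mathcal F_{t-\delta_n})$ is within $\lito(1)$ of $q^n_{t-\delta_n}$ if $c^F_a=1$ and of $1-q^n_{t-\delta_n}$ if $c^F_a=0$, and the coordinates are asymptotically independent, so that, uniformly over spread‑out $\vec\ell$,
\[
  \EE\bigl[{\textstyle\prod_{a}}\I[c^n_{\ell_a}(t)=c^F_a]\bigm|\mathcal F_{t-\delta_n}\bigr]=(q^n_{t-\delta_n})^{w}(1-q^n_{t-\delta_n})^{b}+\lito(1).
\]

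Multiplying by the edge indicators $\prod_{\{a,b\}\in E(F)}e^n_{\ell_a\ell_b}(t-\delta_n)$, which are $\mathcal F_{t-\delta_n}$‑measurable, averaging over $\vec\ell$, and absorbing the frozen‑graph coupling errors, we get $\EE[t_F(G^n_t)\mid\mathcal F_{t-\delta_n}]=(q^n_{t-\delta_n})^{w}(1-q^n_{t-\delta_n})^{b}\,t_{\bar F}(G^n_{t-\delta_n})+\lito(1)$; a standard second‑moment estimate --- using once more that the dual walks from two far‑apart tuples stay separated with probability $1-\lito(1)$ --- shows that $t_F(G^n_t)$ concentrates around this conditional mean, hence $\EE|t_F(G^n_t)-(q^n_{t-\delta_n})^{w}(1-q^n_{t-\delta_n})^{b}t_{\bar F}(G^n_{t-\delta_n})|\to0$. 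Finally, $(q^n_s)_{s\ge0}$ is a bounded martingale whose jumps have size $1/n$ and occur at total rate at most $\eta n^2$, so $\EE[(q^n_t-q^n_{t-\delta_n})^2]\le\eta\delta_n\to0$; since $q\mapsto q^{w}(1-q)^{b}$ is Lipschitz on $[0,1]$ and $t_{\bar F}\le1$, we may replace $q^n_{t-\delta_n}$ by $q^n_t$ and $t_{\bar F}(G^n_{t-\delta_n})$ by $t_{\bar F}(G^n_t)$ at a cost of $\bigo(\sqrt{\delta_n})+\bigo(\delta_n)=\lito(1)$, which proves the displayed estimate and hence the lemma.

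The main obstacle is that the voter duality in its usual form requires the underlying graph to evolve independently of the opinions, which fails here because concordant and discordant edges switch at different rates. The device circumventing this is the passage to the short window $[t-\delta_n,t]$ with $\delta_n\to0$ slowly: the bounded edge‑flip rates make the graph essentially frozen there, so the coalescing dual walks can be run on the $\mathcal F_{t-\delta_n}$‑measurable graph $G^n_{t-\delta_n}$ with genuinely fresh randomness, while $\delta_n$ is still long enough for the rapid mixing of Lemma~\ref{lem11} to take effect; quantitatively everything hinges on the window lying in the range $1/n\ll\delta_n\ll n^{-1/2}$. A secondary technical point, also feeding into Lemma~\ref{lem11}, is propagating the lower bound on $\noverlap(G^n_s)$ over the finite horizon $[0,t]$, respectively isolating the regime in which the edge density degenerates.
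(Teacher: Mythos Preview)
Your proof is correct and follows essentially the same approach as the paper: both exploit a short backward window with $1/n\ll\delta_n\ll n^{-1/2}$, couple the dual coalescing random walks on the evolving graph with those on the graph frozen at the start of the window, invoke the mixing estimate of Lemma~\ref{lem11}, and rely on a connectivity lower bound at positive times (Lemma~\ref{lem12} in the paper) to make that mixing estimate applicable. The only organisational differences are that the paper packages the single-time estimate as Lemma~\ref{lem13} and upgrades it to uniform convergence on $[\eps_n,T]$ before invoking Remark~\ref{rem1}(a), whereas you reduce via dominated convergence to pointwise-in-$t$ convergence and make the second-moment concentration step explicit.
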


 To prove this, we first show in Lemma~\ref{lem11} that the random walk on the evolving random graph mixes on time scale $1/n$. We show in Lemma~\ref{lem15} that the coloured empirical graphon process collapses from the space $\cW$ to the subspace $\cW_0$ within any time~$\eps_n$ satisfying $1/n \ll \eps_n \ll 1/\sqrt{n}$ as $n\to\infty$, provided the graph connectivity is above some $\delta >0$. We will use pathwise duality of the voter model to continuous random walks here. We guarantee graph connectivity is above some $\delta >0$ in Lemma~\ref{lem12}.

Recall the definition of $\noverlap$ from \eqref{4}. It is easy to see that 
\be{
\noverlap(G) = \inf_{i,j \in [n]} \frac{\babs{N_i^G\cap N^G_j}}{n}, \qquad G\in\cG_n,
}
where $N_i^G$ denotes the set of neighbours of vertex $i$ in $G$. In what follows, $d_{\mathrm{TV}}$ stands for total variation distance and $\law$ stands for law. 

\begin{lemma}[Random walk mixing time]
\label{lem11}
Let $G\in\cG_n$. Let $Y^i = (Y^i_t)_{t \geq 0}$ be the continuous-time random walk on $G$ starting from vertex $i$ and jumping at rate $\eta$ along any edge that is adjacent to its current location (that is, the total jumping rate is $\eta$ times the degree of the current vertex). Then 
\be{
  \sup_{i \in [n]} \dtv\bclr{\law(Y^i_t), \law(U)} \leq
  \exp\bclr{-\eta \noverlap(G)^2n t},  } for all $t \geq 0$ and where $U$ is
the uniform distribution on $[n]$.
\end{lemma}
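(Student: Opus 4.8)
The plan is to exploit the fact that $\noverlap(G)$ bounds below the \emph{overlap} of the neighbourhoods of any two vertices, which translates directly into a lower bound on the probability that two independent copies of the walk, or one copy and a stationary copy, can be coupled in a single step. Concretely, fix $G\in\cG_n$ and recall that the generator of $Y^i$ acts by $(\cL g)(k) = \eta\sum_{\ell\in N^G_k}\bclr{g(\ell)-g(k)}$. The uniform distribution $U$ on $[n]$ is stationary for this walk precisely because $G$ is simple and undirected: the walk is a constant-speed random walk whose jump rates out of $k$ sum to $\eta\,\deg(k)$, but reversibility with respect to the degree measure together with the specific rate normalisation makes $U$ stationary only if $G$ is regular. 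Since the paper does \emph{not} assume regularity, I would instead observe that here the relevant process is the \emph{constant-rate} walk: it waits an $\mathrm{Exp}(\eta)$ time at each vertex and then jumps to a uniformly chosen neighbour? No --- re-reading the statement, ``jumping at rate $\eta$ along any edge adjacent to its current location'' means each adjacent edge fires at rate $\eta$, so the holding rate at $k$ is $\eta\deg(k)$. For $U$ to be stationary we would need $G$ regular. I therefore expect the intended reading, consistent with the voter-model duality used later, is that $U$ is the stationary law of the \emph{voter dual}, i.e. the walk's stationary distribution is the degree-biased measure, OR that in the relevant application $\noverlap(G)>0$ already forces near-regularity up to the stated error. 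To keep the plan robust I will prove the TV bound against the actual stationary distribution $\pi$ of $Y^i$ and note that $\noverlap(G)\le \min_k \deg(k)/n$ forces $\pi$ to be within $O(1)$...

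Actually, the cleanest route, and the one I would carry out, is a direct coupling argument that does not need to identify $\pi$ explicitly. Run the walk $Y^i$ and, independently, let $Z$ be a walk started from the stationary distribution (so $Z_t\sim\pi$ for all $t$; in the regular case $\pi$ is uniform, which is what the statement asserts). I construct a Markovian coupling of $(Y^i,Z)$ as follows: the two walks evolve independently until the first time they share a \emph{common neighbour}; from the structure of the continuous-time dynamics, at any instant at which $Y^i_t = a$ and $Z_t = b$ with $N^G_a\cap N^G_b\neq\emptyset$, I couple a portion of the edge-firing clocks so that with rate at least $\eta\,\noverlap(G)\,n$ both walks simultaneously jump to the \emph{same} vertex in $N^G_a\cap N^G_b$ (there are at least $\noverlap(G)n$ such vertices, and each is reachable in one step from each walk at rate $\eta$; standard thinning lets me synchronise the two jumps onto a common target). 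Because $N^G_a\cap N^G_b$ is nonempty for \emph{every} pair $(a,b)$ by definition of $\noverlap(G)>0$, the coalescence clock rings at rate at least $\eta\,\noverlap(G)^2 n$ at \emph{all} times: more carefully, among the $\noverlap(G) n$ common neighbours, for a fixed common target $w$, $Y^i$ jumps to $w$ at rate $\eta$ and $Z$ jumps to $w$ at rate $\eta$, and one can couple so that the event ``both move to $w$ now'' has rate $\eta \cdot (\text{fraction of }Z\text{'s jump mass on }w)$; summing the guaranteed overlap over $w$ gives an aggregate coalescence rate bounded below by $\eta\,\noverlap(G)\cdot\noverlap(G) n = \eta\,\noverlap(G)^2 n$. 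Hence the coalescence time $\tau$ is stochastically dominated by an $\mathrm{Exp}\bclr{\eta\,\noverlap(G)^2 n}$ random variable, and by the coupling inequality
\be{
\dtv\bclr{\law(Y^i_t),\law(Z_t)} \le \IP(\tau > t) \le \exp\bclr{-\eta\,\noverlap(G)^2 n\, t},
}
uniformly in $i$. Since $\law(Z_t)=\law(U)$, this is the claimed bound.

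The key steps in order are: (1) record that $\noverlap(G)>0$ means every pair of vertices has at least $\noverlap(G)n$ common neighbours, via the identity $\noverlap(G)=\inf_{i,j}|N^G_i\cap N^G_j|/n$ already noted in the excerpt; (2) set up the independent-then-coalescing Markov coupling of $Y^i$ with a stationary copy $Z$, making precise the thinning of the edge clocks that lets both walks jump simultaneously to a common neighbour; (3) lower-bound the instantaneous coalescence rate by $\eta\,\noverlap(G)^2 n$ \emph{uniformly in the current positions}, which is the crux; (4) conclude $\tau \preceq \mathrm{Exp}(\eta\,\noverlap(G)^2 n)$ and apply the coupling inequality.

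The main obstacle I anticipate is step (3): making the simultaneous-jump coupling genuinely Markovian and getting the rate bound to come out as $\noverlap(G)^2 n$ rather than something weaker. The subtlety is that $Z$'s jump, from its current vertex $b$, is spread (at rate $\eta$ each) over the $\deg(b)$ neighbours of $b$, and only the portion landing in $N^G_a\cap N^G_b$ is useful for coalescence; one must simultaneously ensure $Y^i$'s jump lands on the \emph{same} such vertex. The correct bookkeeping is: for each $w\in N^G_a\cap N^G_b$ there is a rate-$\eta$ clock for ``$Y^i$ moves to $w$'' and a rate-$\eta$ clock for ``$Z$ moves to $w$''; I couple these two clocks to fire together, which contributes rate $\eta$ to the event ``both walks are now at $w$''. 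Summing over the $\ge \noverlap(G) n$ choices of $w$ gives aggregate rate $\ge \eta\,\noverlap(G) n$ for coalescence at the \emph{next} joint jump --- which already suffices and in fact gives the stronger bound $\exp(-\eta\,\noverlap(G) n t)$; the stated $\noverlap(G)^2$ version is then immediate since $\noverlap(G)\le 1$. I would present the $\noverlap(G)^2$ bound as stated (it is what the later lemmas use) but remark that the coupling actually yields the sharper exponent. A secondary point to nail down is the identification of $\law(Z_t)$ with $\law(U)$: I would either invoke the standing assumption that the relevant stationary law is uniform (true when the graph is regular, or when one works with the voter dual and the appropriate speed measure), or, if one wants the statement verbatim for general simple $G$, note that the walk as defined has holding rate $\eta\deg(k)$ so its stationary law is proportional to $\deg(k)$ and replace ``$U$ uniform'' by ``$U$ the degree-biased law'' --- but since the lemma is stated with $U$ uniform and is used in a regime where $\noverlap(G)>0$, I will adopt the convention (consistent with the duality in Lemma~\ref{lem10}) under which $U$ is genuinely the stationary distribution and keep the proof as above.
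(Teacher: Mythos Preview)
Your confusion about whether the uniform distribution $U$ is stationary is misplaced and is the only real gap in the plan. The walk has transition rate $q(i,j)=\eta\, e_{ij}$ for $i\neq j$, which is symmetric in $i,j$; hence detailed balance $\pi(i)q(i,j)=\pi(j)q(j,i)$ holds for $\pi\equiv 1/n$, and $U$ is genuinely the stationary law for \emph{any} simple undirected $G$ (no regularity needed). You should state this one-line reversibility check at the outset and drop the hedging about degree-biased measures.

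Once that is settled, your coupling is exactly the paper's: share a rate-$\eta$ clock between the two walks for each common neighbour $w\in N_a\cap N_b$, and run independent clocks on the remaining edges. The paper couples $Y^i$ with $Y^j$ for an arbitrary $j$ and uses $\dtv(\law(Y^i_t),U)\le \sup_j \dtv(\law(Y^i_t),\law(Y^j_t))$; you couple with a stationary copy $Z$ directly. These are equivalent. Your observation that the shared-clock coupling actually gives coalescence rate $\ge \eta\,\noverlap(G)\,n$ (one factor of $\noverlap$, not two) is correct; the paper arrives at $\noverlap(G)^2$ by a looser accounting---bounding the success probability per ``attempt'' by $\noverlap$ and the attempt rate by $\eta\,\noverlap n$, then composing a geometric number of exponentials---rather than by reading off the coalescence rate directly. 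Your first pass at the rate (``$\eta\cdot(\text{fraction of }Z\text{'s jump mass on }w)$'') is muddled; discard it and keep the clean version: $\abs{N_a\cap N_b}\ge \noverlap(G)\,n$ shared clocks, each of rate $\eta$, so coalescence rate $\ge \eta\,\noverlap(G)\,n$ uniformly in $(a,b)$, hence $\tau\preceq\mathrm{Exp}(\eta\,\noverlap(G)\,n)$ and the stated bound (a fortiori, with $\noverlap^2$) follows.
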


\begin{proof} 
Since $U$ is the stationary distribution of $Y^i$, we have
\be{
\dtv\bclr{\law(Y^i_t), \law(U)} \leq \sup_{j \in [n]} \dtv\bclr{\law(Y^i_t), \law(Y^j_t)}. 
}
We upper bound the right-hand side via a coupling argument as follows. Couple $Y^i$ and $Y^j$ such that the probability that they jump simultaneously to the same common neighbour equals
\be{
\frac{\abs{N_i\cap N_j}}{\abs{N_i}+\abs{N_j}-\abs{N_i\cap N_j}} 
\geq \frac{\noverlap n}{2n-\noverlap n} \geq \noverlap,
}
where $\noverlap = \noverlap(G)$. This can be achieved by using the same exponential clocks of rate $\eta$ for the common neighbours of $i$ and $j$, and independent exponential clocks for the non-common neighbours. The rate at which any of the random walks jumps is $\abs{N_i} \vee \abs{N_j} \geq \noverlap n$. If they jump simultaneously to any of the common neighbours, then they are coupled; if not, then we repeat the argument. Hence, the coupling time is upper bounded by a random sum of independent exponential random variables of rate $\eta \noverlap n$, where the number of summands is independent of the summands themselves and has a geometric distribution on $\N$ with success probability $\noverlap$. This random sum is exponentially distributed with rate $\eta\noverlap^2 n$, and so the claim follows.
\end{proof}

In order to use the mixing time estimate of the random walk effectively, we need an estimate of the graph connectivity at any time~$t \geq 0$. We provide this estimate in the next lemma.

\begin{lemma}[Guarantees on graph connectivity]
\label{lem12} 
Let $(G^n_t)_{t \geq 0}$ be the graph process and assume that $\noverlap(G^n_0)\geq \noverlap_0 >0$ for $n$ large enough and for some $\nu_0>0$. Let 
\be{
z_0 = s_{\mathrm{c},0} \wedge s_{\mathrm{d},0}, \qquad z_1 = s_{\mathrm{c},1} \vee s_{\mathrm{d},1}, \qquad p = \frac{z_0}{z_0+z_1}.
}
If\/ $p>\noverlap_0$, then, for all $n$ large enough,
\be{
\sup_{t \geq 0} \IP\bclr{\noverlap(G^n_t) \leq \tfrac12 \noverlap_0^3} \leq \ee^{-\tfrac13\noverlap_0^5 n}.
}
If $z_0=0$ and $z_1>0$, then, for any $T>0$ and for all $n$ large enough,
\be{
  \sup_{t \in [0,T]} \IP\bclr{\noverlap(G^n_t) \leq \tfrac12 \noverlap_0\ee^{-2\rho z_1 t}} 
  \leq \exp\bclr{-\tfrac13 \noverlap_0^2 \ee^{-4 \rho z_1 T}n}.
}
If $z_1=0$, then, for all $n$,
\be{
  \sup_{t \geq 0} \IP\bclr{\noverlap(G^n_t) < \nu_0 } = 0.
}
\end{lemma}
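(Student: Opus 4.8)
The plan is to dominate every edge of $G^n$ from below by an \emph{independent} two-state Markov chain whose rates do not depend on the colours, and then to reduce all three estimates to Chernoff bounds for binomial random variables. Write $\theta_0=\rho z_0$ and $\theta_1=\rho z_1$. Whatever the colours of its endpoints, a present edge of $G^n$ disconnects at rate at most $\theta_1$ and an absent edge connects at rate at least $\theta_0$; it is this monotone comparison of rates that the argument exploits.

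\emph{Step 1: the coupling.} On an enlarged probability space I would construct $G^n=(G^n_t)_{t\geq0}$ jointly with a family $(Z_{uv})_{1\leq u<v\leq n}$ of i.i.d.\ $\{0,1\}$-valued continuous-time Markov chains, each jumping $0\to1$ at rate $\theta_0$ and $1\to0$ at rate $\theta_1$, started from $Z_{uv}(0)=e_{uv}(G^n_0)$, in such a way that
\be{
  e_{uv}(G^n_t) \geq Z_{uv}(t) \qquad\text{for all }t\geq0\text{ and all }1\leq u<v\leq n .
}
This is done by a graphical construction: attach to each pair $\{u,v\}$ a rate-$\theta_0$ ``connect'' clock, a rate-$\theta_1$ ``disconnect'' clock, and an auxiliary ``connect'' mechanism firing at the colour-dependent rate $\rho(s_{\mathrm{c},0}-z_0)$ or $\rho(s_{\mathrm{d},0}-z_0)$, all mutually independent and independent of the vertex dynamics. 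When the connect clock of $\{u,v\}$ rings, set both $Z_{uv}\gets1$ and $e_{uv}\gets1$; the auxiliary mechanism sets only $e_{uv}\gets1$; when the disconnect clock rings, set $Z_{uv}\gets0$ and, conditionally on the current colours, set $e_{uv}\gets0$ with probability $s_{\mathrm{c},1}/z_1$ or $s_{\mathrm{d},1}/z_1$. One checks that $G^n$ has the correct law, that the $Z_{uv}$ are i.i.d.\ with the stated rates, and that the inequality $e_{uv}\geq Z_{uv}$, which holds at time $0$, is preserved by every clock event (a disconnect-clock ring leaves $Z_{uv}=0$, and no other event decreases $e_{uv}$). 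If $z_1=0$ this construction is degenerate and is not needed; see Step~2.

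\emph{Step 2: the three estimates.} If $z_1=0$, then $\rho s_{\mathrm{c},1}=\rho s_{\mathrm{d},1}=0$, so no edge ever disconnects, hence $N_i^{G^n_t}\supseteq N_i^{G^n_0}$ for all $i$ and all $t$, and therefore $\noverlap(G^n_t)\geq\noverlap(G^n_0)\geq\noverlap_0$ surely, which gives the last assertion. For the two remaining cases, condition on $G^n_0$; by hypothesis $\abs{N_i^{G^n_0}\cap N_j^{G^n_0}}\geq\noverlap_0 n$ for every $i,j$, so $K_{ij}:=\bclc{k\colon e_{ik}(G^n_0)=e_{jk}(G^n_0)=1}$ has $\abs{K_{ij}}\geq\noverlap_0 n$. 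For $k\in K_{ij}$ we have $Z_{ik}(0)=Z_{jk}(0)=1$, and since $Z_{ik}$ and $Z_{jk}$ evolve independently (the edges $\{i,k\}$ and $\{j,k\}$ being distinct),
\be{
  \IP\bclr{Z_{ik}(t)=Z_{jk}(t)=1} = \bbclr{p+(1-p)\,\ee^{-(\theta_0+\theta_1)t}}^{2} \geq q_t ,
}
where $q_t:=p^2$ (uniformly in $t$) when $z_0>0$, and $q_t:=\ee^{-2\theta_1 t}$ when $z_0=0$. Since the pairs $(Z_{ik}(t),Z_{jk}(t))$, $k\in K_{ij}$, are independent over $k$, the quantity $\abs{N_i^{G^n_t}\cap N_j^{G^n_t}}\geq\sum_{k\in K_{ij}}Z_{ik}(t)Z_{jk}(t)$ stochastically dominates a $\Bi(\abs{K_{ij}},q_t)$ random variable, whose mean is at least $\noverlap_0 n\,q_t$. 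A Chernoff bound then bounds $\IP\bclr{\abs{N_i^{G^n_t}\cap N_j^{G^n_t}}\leq\tfrac12\noverlap_0 n\,q_t}$ by $\ee^{-c\,\noverlap_0 n\,q_t}$, and a union bound over the at most $n^2$ pairs, together with $q_t\geq p^2\geq\noverlap_0^2$ (case $z_0>0$) or $q_t\geq\ee^{-2\theta_1 t}\geq\ee^{-2\theta_1 T}$ for $t\in[0,T]$ (case $z_0=0$), yields estimates of the stated form, uniformly in $t$ (respectively, uniformly in $t\in[0,T]$); matching the explicit constants $\tfrac12\noverlap_0^3$, $\tfrac13\noverlap_0^5$ and their counterparts is elementary bookkeeping, using the sharp exponential form of the Chernoff inequality and the fact that the conclusion is only asserted for $n$ large.

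The main obstacle is Step~1. Because the edge transition rates of $G^n$ depend on the instantaneous --- and itself evolving --- colouring, one cannot compare edges with i.i.d.\ chains only in law: the domination must be realised pathwise, and the colour-dependent thinning in the construction has to be arranged so that $e_{uv}$ keeps its correct conditional rates \emph{and} the order $e_{uv}\geq Z_{uv}$ is never violated; verifying these two requirements at once is the delicate point. Once the coupling is in hand, the reduction to independent binomials --- via conditioning on $G^n_0$ and the disjointness of the pairs $\{i,k\}$ and $\{j,k\}$ --- together with the Chernoff and union-bound steps, is routine.
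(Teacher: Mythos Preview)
Your proposal is correct and follows essentially the same route as the paper: stochastically dominate each edge from below by an independent two-state chain with rates $\rho z_0$ and $\rho z_1$, restrict attention to the initial common neighbours $K_{ij}$, bound $\abs{N_i^{G^n_t}\cap N_j^{G^n_t}}$ below by a binomial, apply a concentration inequality (the paper uses Hoeffding, you say Chernoff---either works), and finish with a union bound over pairs. The case split is identical. The only notable difference is that you spell out the graphical construction realising the pathwise domination $e_{uv}\geq Z_{uv}$, whereas the paper simply asserts the stochastic lower bound in one sentence; your more explicit treatment is a virtue, not a deviation.
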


\begin{proof}
It is easy to see that the $n \choose 2$ edge processes in the graph process between the $n$ vertices can be stochastically lower bounded by $n \choose 2$ independent two-state Markov processes on $\{0,1\}$, each having transition rate $\rho z_0$ for the transition $0 \mapsto 1$ and transition rate $\rho z_1$ for the transition $1 \mapsto 0$, and each starting from the initial state of the corresponding edge in the graph process. The distribution of each Markov process at time~$t$ is $\Be(p (1-\ee^{-\rho z_0 t}))$ when started in $0$ and $\Be(\ee^{-\rho z_1 t} + p(1-\ee^{-\rho z_1 t}))$ when started in $1$. Using the fact that the number of common neighbours at time~$0$ is at least $\noverlap_0 n$ and that $\ee^{-\rho z_1 t} + p(1-\ee^{-\rho z_1 t}) \geq p$ for all $t \geq 0$, we can stochastically lower bound the number of common neighbours between any two vertices at time~$t$ by $\Bi(\noverlap_0 n,p^2)$ for all $t \geq 0$. Using a union bound, estimating $\nu_0 \leq p$, and applying Hoeffding's inequality, we obtain
\bes{
\IP\bclr{\noverlap(G^n_t)\leq \tfrac12 \noverlap_0^3} 
&\leq {n \choose 2} \IP\bigl(\Bi(\noverlap_0 n,p^2)\leq \tfrac12 \noverlap_0^3n\bigr) \\ 
&\leq {n\choose 2} \IP\bigl(\Bi(\noverlap_0 n,\noverlap_0^2)\leq \tfrac12 \noverlap_0^3n\bigr) \\ 
&\leq {n \choose 2} \exp\bclr{-2\noverlap_0 n\bclr{\noverlap_0^2-\tfrac12\noverlap_0^2}^2}
\leq \exp\bclr{-\tfrac12 \noverlap_0^5 n + 2 \log n}.
} 
The claim follows by noting that the right-hand side does not depend on $t$ and by choosing $n$ large enough. 

For the second case, for any time~$t \geq 0$, the density of 1's equals $\noverlap_0\ee^{-\rho z_1 t}$, and the above estimate yields
\bes{
\IP\bclr{\noverlap(G^n_t) \leq \tfrac12 \noverlap_0\ee^{-2\rho t}}  
&\leq {n \choose 2} \IP\bclr{\Bi(\noverlap_0 n,\ee^{-2 \rho z_1 t}) \leq \tfrac12 \noverlap_0\ee^{-2\rho z_1 t} n} \\ 
&\leq {n \choose 2} \exp\bclr{-2 (\noverlap_0 \ee^{-2 \rho z_1 t} - \tfrac12 \noverlap_0 \ee^{-2 \rho z_1t})^2 n} \\
&\leq \exp\bclr{-\tfrac12 \noverlap_0^2 \ee^{-4 \rho z_1 t}\, n + 2 \log n}.
}
The third case is trivial, since edges can only be added or no edge can change at all.
\end{proof}

We now prove some preliminary estimates using pathwise duality of the voter model to continuous random walks.

\begin{lemma} \label{lem13} 
Fix a coloured graph $F$, assume $\nu(G) > 0$, and suppose that $(\eps_n)_{n\in\N}$ is such that
\ben{\label{41}
  \lim_{n\to\infty} \sqrt{n}\,\eps_n = 0 \qquad\text{and}\qquad \lim_{n\to\infty} n\eps_n /\log n = \infty.
}
Then
 \ben{\label{42} 
   \lim_{n \to \infty} \EE\bcls{\abs{t_{F}(\kappa^n_0,c^n_{\eps_n}) - t_{F_m}(\kappa^n_0,\bar{c}^n_0)}} = 0.
}
\end{lemma}

\begin{proof}
Let $k$ denote the number of vertices in $F$.  Using \eqref{16} we can write
\ban{
 & t_{F}(\kappa^n_0,c^n_{\eps_n})   = \frac{1}{(n)_{k}} \sump_{a \in [n]^{k}} I_a J_a + \bigo(n^{-1}), \nonumber\\
  &   t_{F}(\kappa^n_0,\bar{c}^n_0) =\frac{1}{(n)_{k}} \sump_{a \in [n]^{k}}\IE\bcls{I''_a} J_a   + \bigo(n^{-1}),  \label{43}
}
where  $\sum'$ denotes summation over all tuples of indices that are mutually distinct, and where
\be{
    I_a = \prod_{j\in [k]}\I\bcls{c^n_{\eps_n}\left(\tfrac{a_j}{n}\right) = c^{F}_j},
  \quad  J_a  = \prod_{u\stackrel{F}{\sim} v} \kappa^n_0\left(\tfrac{a_u}{n},\tfrac{a_v}{n}\right)
\quad \mbox{and }
 I''_a  = \prod_{j\in[k]}\I\bbcls{c^n_0\bbclr{\tfrac{U^{a_j}}{n}} = c^{F}_j }
}
with $(U^{a_j})_{j \in [k]}$ being $k$ mutually independent uniform random variables on $[n]$ (we will couple these suitably in the proof below). Applying the triangle inequality we have,
\bes{
  \mel \EE\bcls{\abs{t_{F}(\kappa^n_0,c^n_{\eps_n}) - t_{F}(\kappa^n_0,\bar{c}^n_0)}}\\
  & \leq  \EE\bcls{\abs{t_{F}(\kappa^n_0,c^n_{\eps_n}) - \EE t_{F}(\kappa^n_0,c^n_{\eps_n})}} +   \EE\bcls{\abs{\EE t_{F}(\kappa^n_0,c^n_{\eps_n}) - t_{F}(\kappa^n_0,\bar{c}^n_0)}} \nonumber\\
  & \leq  \sqrt{\Var t_{F}(\kappa^n_0,c^n_{\eps_n})} +  \EE\bcls{\abs{\EE t_{F}(\kappa^n_0,c^n_{\eps_n}) - t_{F}(\kappa^n_0,\bar{c}^n_0)}},
}
where we have used  Cauchy-Schwarz inequality in the last step. Now using \eqref{43} in the above we have
\ban{
  \mel \EE\bcls{\abs{t_{F}(\kappa^n_0,c^n_{\eps_n}) - t_{F}(\kappa^n_0,\bar{c}^n_0)}}\nonumber\\
  & \leq \sup_{\substack{a,a'\in[n]^k,\\ \abs{a\cap a'}=\emptyset}}\Cov(I_a J_a, I_{a'} J_{a'})^{1/2} + \bigo(n^{-1/2}) \label{44} \\  
  & \qquad +  \frac{1}{(n)_{k}} \sump_{a \in [n]^{k}}\IE\bcls{\abs{I_a- I''_a} }   + \bigo(n^{-1}). \label{45}
}

In what follows we show that the terms in \eqref{44} and \eqref{45} converge to $0$ as $n \rightarrow \infty$ whenever $\nu(G) \geq \delta$. We do this in a sequence of 4 steps. In Step 1 we construct coalescing random walks that mimic the classical pathwise duality between the voter model and a system of coalescing random walks (see e.g.\ the monograph \cite{L85} or the survey of \cite[Section~4]{JK14}). Because the underlying graph changes over time, we cannot apply the classical duality directly. In Steps 2 and 3 we control the probability of jumps of the random walks along with the number of edge changes in the network. In Step 4 we use these estimates to finish the proof.

\paragraph{Step 1 (Construction of  Coalescing Random walks).}  
Let $\{G^n_t\}_{0 \leq t \leq \eps_n}$ be the co-evolving process. We will us the classical notion of the graphical representation of the process. For each ordered pair of distinct vertices $(i,j)$, let $(\cV^{ij}_t)_{t \geq 0}$ be a Poisson process of rate $\eta$, independent of all else. At the arrival times, if there is an edge between vertices~$i$ and~$j$, then vertex~$i$ imposes its colour onto vertex~$j$ (or equivalently, vertex $j$ adopts the colour of vertex $i$).

For $k\geq 1$ and $a = (a_1,\ldots,a_k) \in [n]^{k}$ with mutually distinct entries, let $(Y^{a_j}_{t})_{j \in [k], t \in [0,\eps_n]}$ be the locations of~$k$ coalescing random walks on the fixed graph $G^n_0=G$, starting from vertices $(a_j)_{j \in [k]}$. To determine the jump times and targets, we use the processes $(\cV^{ij}_t)_{t \geq 0}$ in the reverse direction and reverse time but only considering jumps when the corresponding edge is present in $G^n_0$; that is, $\cV^{ij}_{\eps_n-t}-\cV^{ij}_{\eps_n-t-}=1$ represents a jump from $j$ to $i$ at time $t$ if $i$ and $j$ are connected in $G^n_0$. As is easily verified, this results in the processes $(Y^{a_j}_{t})_{j \in [k], t \in [0,\eps_n]}$ jumping at rate $\eta$ along each of the edges of $G^n_0$, independently of the other $k-1$ random walks until they meet, from which point onwards they move together. 

If the graph does not evolve, we can determine the colours of vertices $(a_j)_{j \in [k]}$ in the evolcing graph process at time~$\eps_n$ via the colours of the vertices $(Y^{a_j}_{\eps_n})_{j \in [k]}$ of the evolving graph process at time~$0$. This will require that the jump times and jump targets determined by the processes $(\cV^{ij}_t)_{t \geq 0}$ correspond to edges for which the edge status has not changed between time $0$ and time $\eps_n$. In order to control the probability of this event, we will need another set of coalescing random walks.

Let $(Z^{a_j}_{t})_{j \in [k], t \in [0,\eps_n]}$ be the locations of~$k$ coalescing random walks, starting from vertices $(a_j)_{j \in [k]}$, again employing the processes $(\cV^{ij}_{\eps_n-t})_{t \geq 0}$ to determine the jump times and targets, but now considering jumps at time $t$ only when the corresponding edge is present in $G^n_{\eps_n-t}$. Note that the colours of the vertices $(Z^{a_j}_{\eps_n})_{j \in [k]}$ determine the colours of vertices $(a_j)_{j \in [k]}$ at time $\eps_n$. However, also note that the process $(Z^{a_j}_{t})_{j \in [k], t \in [0,\eps_n]}$ is not Markov, since whether or not an edge is present at time $t$ depends on a future event from the view point of $Z^{a_j}_{t}$.

To bound the probability that $(Y^{a_j}_{t})_{j \in [k], t \in [0,\eps_n]}$ is not affected by any edge changes and thus represents the true ancestry of the colours of the vertices $a_1,\dots, a_k$ in the evolving graph process at time $\eps_n$ all the way back to time $0$, we will need to understand
\be{
  L_a = \prod_{j=1}^k \I\bcls{\text{$Y^{a_j}_t = Z^{a_j}_t$  for all $0\leq t \leq \eps_n$}}.
}
Before this, choose the $k$ independent uniform random variables $(U^{a_j})_{j \in [k]}$ on $[n]$ to be maximally coupled to $(Y^{a_j}_{\eps_n})_{j \in [k]}$ and let $I''_a$ be defined as before. 


\paragraph{Step 2 (Comparing path duality and simple random walk).} 
We next prove that
\beqn{ \label{46}
    \IP(L_a= 0) \to 0  \qquad\text{as $n \rightarrow \infty$ }
}
uniformly in $a \in [n]^{k}$ and $\nu(G^n_0) \geq \delta$. As noted earlier, for any edge the probability of it having changed between time $0$ and time $\eps_n$ is at most $1-\ee^{-\rho \-s\eps_n}\leq \rho \-s\eps_n$, where $\-s = \max\{s_{\mathrm{c},0},s_{\mathrm{d},0},s_{\mathrm{c},1},s_{\mathrm{d},1}\}$. Therefore, for each $j\neq i$, the rate at which the edge status between $i$ and $j$ changes is upper bounded by $\rho \-s$. Hence, the number $M_i$ of vertices $j$ for which the edge status between $i$ and $j$ changes between time $0$ and time $\eps_n$ is stochastically upper bounded by $\Bi(n,1-\ee^{-\rho \-s \eps_n})$. By Chernoff's bound,
\besn{\label{47}
  \mel\IP\bclr{\text{$M_i > 2\rho \-sn\eps_n$ for any $i\in[n]$}}
  \leq n \IP\bclr{M_1 > 2\rho \-s n\eps_n} \\
  & \leq n \exp\bclr{-\tfrac13 n(1-\ee^{-\rho \-s \eps_n})} 
  \leq n \exp\bclr{-\tfrac16\rho \-s n \eps_n}
}
for $n$ large enough (say, for $n$ such that $\eps_n<1/\rho \-s$).

We next control the number of jumps performed by $Y^{a_i}_t$ by time~$\eps_n$; call this number $K_{a_i}$. Since the rate of jumping is at most $\eta n$, the number of jumps is stochastically upper bounded by $\Po(\eta n \eps_n)$. Hence, using again Chernoff-type bounds, we obtain
 \besn{\label{48}
   \IP\bclr{\text{$K_{a_i} > 2n\eta\eps_n$ for any $i\in[k]$}} 
   \leq k \IP\bclr{K_1 > 2n\eta\eps_n}
   \leq k \exp\bclr{-\tfrac13\eta n \eps_n}.
}
Define the event
\be{
  A_a = \bclc{\text{$M_i \leq 2\rho \-sn\eps_n$ for all $i\in[n ]$}} \cap \bclc{\text{$K_{a_i} \leq 2n\eta\eps_n$ for all $i\in[k]$}}.
}
From \eqref{47} and \eqref{48}, and invoking \eqref{41}, we have 
\ben{\label{49}
  \IP(A_a^c) \to 0 \qquad \text{as $n\to\infty$}
} 
uniformly in $a \in [n]^{k}$.

Note that if $\-s=0$, then $\IP(L_a= 0)=0$, so assume $\-s>0$. Also note that the number of vertices attached to, say, vertex $i$ in $G^n_0$ is at least~$\nu n$. Hence, on the event $A_a$ and on the graph $G^n_t$, the number of vertices that are attached to vertex $i$ and that do not change their edge status is at least~$\nu n - 2\rho \-s n\eps_n$. Moreover, the absolute difference of edges attached to vertex $i$ in $G^n_t$ as compared to that in $G^n_0$ is no more than $2\rho \-sn\eps_n$. Hence, the probability that $Z^{a_i}$ jumps along an edge that has appeared after time 0 or that $Y^{a_i}$ jumps along an edge that has disappeared after time~$0$ is at most 
\be{ 
  \frac{2\rho \-s n\eps_n}{\nu n - 2\rho \-sn\eps_n} = \frac{2\rho \-s\eps_n}{\nu - 2\rho \-s\eps_n}\leq \frac{4\rho \-s \eps_n}{\nu}\leq \frac{4\rho \-s \eps_n}{\delta}
}
for $n$ large enough (say, for $n$ such that $\eps_n < \delta/(4\rho\-s)$). Thus, 
\bes{
  \IP(L_a=0,A_a) 
  & \leq 1-\bclr{1- 4\rho \-s \eps_n\delta^{-1}}^{2\eta kn\eps_n} \leq
  8\rho \-s \eta kn\eps_n^2\delta^{-1}
}
by Bernoulli's inequality for $n$ large enough, so that $4\rho \-s \eps_n\delta^{-1}<1$ and $2\eta k n\eps_n>1$. Invoking \eqref{41} and \eqref{49}, we obtain
\besn{\label{50}
  \IP(L_a= 0) & \leq \IP(A_a^c) + \IP(L_a=0,A_a) \to 0 \qquad \text{as $n\to\infty$,}
}
uniformly in $a \in [n]^{k}$ and $\nu \geq \delta$, which is \eqref{46}.
  
  
\paragraph{Step 3 (Closeness to uniformity).} 
We next prove that
\beqn{ \label{51}
  \EE\big[\abs{I'_a - I''_a }\big]   \to 0 \qquad\text{as $n\to\infty$}.}
To this end, note that
\bes{
  \EE\big[\abs{I'_a - I''_a }\big] 
  & \leq \dtv\bclr{\law(Y^a_{\eps_n}),\law(U^a)}\\
  & \leq k \sup_{i \in [k]}  \dtv\bclr{\law(Y^{a_i}_{\eps_n}),\law(U^{a_i})} \\
  & \quad + \binom{k}{2} \sup_{{i,j \in [k]\colon i \neq j} } 
      \PP\bclr{\text{$Y^{a_i}_{t} = Y^{a_j}_t$ for some $t\in[0,\eps_n]$}}.
}
Indeed, when none of the $k$ coalescing random walks meet, they evolve as independent random walks, and we can use the union bound to estimate the joint total variation distance by the marginal total variation distances.  Since two random walks meet at rate $2\eta$ when they are at opposite ends of an edge and at rate $0$ otherwise, we have
\bes{
  \PP\bclr{\text{$Y^{a_i}_{t} = Y^{a_j}_t$ for some $t\in[0,\eps_n]$}} \leq 1-\ee^{-2\eta\eps_n} \leq 2\eta\eps_n.
}
Moreover, by Lemma~\ref{lem11},
\bes{
  \sup_{\ell \in [n]} \dtv\bclr{\law(Y^\ell_{\eps_n}),\law(U)} \leq \ee^{-\eta\noverlap^2n\eps_n} \leq \ee^{-\eta\delta^2n\eps_n}.
}
Combining these estimates, we obtain
\besn{\label{52}
  \EE\big[\abs{I'_a - I''_a }\big] \leq k\ee^{-\eta \delta^2 n\eps_n} + \binom{k}{2}\,2\eta\eps_n
  \to 0 \qquad\text{as $n\to\infty$}
}
uniformly in $a \in [n]^{k}$ and $\nu(G^n_0)  \geq \delta$, where we invoke \eqref{41}.
This implies \eqref{51}.


\paragraph{Step 4.} 
Since $I_a = I'_a$\/ when $L_a=1$, we can write
\be{
  I_a = I'_a + (I_a - I'_a) (1- L_a).
}
Hence
\besn{ \label{53}
  \Cov(I_a J_a, I_{a'} J_{a'}) 
  & = \Cov\bclr{I'_a J_a, I'_{a'} J_{a'}} + R_1,
}
where $R_1$ is a sum of terms each containing at least one factor of the form $1- L_a$ or $1- L_{a'}$. Hence, by \eqref{50}, 
\beqn{ \label{54}
  \abs{R_1} \to 0 \qquad \text{as $n\to\infty$}
}
uniformly in $a$ and $a'$ and $\nu(G^n_0) \geq \delta$. Now, write $I'_a = I''_a + (I'_a - I''_a)$. Then,
\bes{
  \Cov\bclr{I'_a J_a, I'_{a'} J_{a'}} 
  & = \Cov\bclr{I''_a J_a, I''_{a'} J_{a'}} + R_2,
}
where $R_2$ is a sum of terms each containing at least one factor of the form $I'_a - I''_a$ or $I'_{a'} - I''_{a'}$.  Since the entries of $a,a'$ are disjoint, we have $\Cov\bclr{I''_a J_a, I''_{a'} J_{a'}} = 0$. By \eqref{52},
\beqn{ \label{55}
  \abs{R_2} \to 0 \qquad \text{as $n\to\infty$}
}
uniformly in $a,a'$ and $\nu (G^n_0) \geq \delta$. Using \eqref{54}, \eqref{54} and \eqref{55} we have shown that the term in \eqref{44} goes to $0$. Using essentially the same argument with a similar decomposition as used in this step the one can see that the term in \eqref{45} goes to~$0$.
\end{proof}

\begin{lemma} \label{lem14} 
Fix a coloured graph $F_m$ on $k_m$ vertices, assume $\nu(G) > 0$, and suppose that $(\eps_n)_{n\in\N}$ is such that \eqref{41} holds. Then
 \besn{\label{56}
  \lim_{n\to\infty} \EE\bcls{\abs{t_{F_m}(\kappa^n_0,\bar{c}^n_0) - t_{F_m}(\kappa^n_0,\bar{c}^n_{\eps_n})}} \to 0.}
\end{lemma}

\begin{proof}
For $a,b \in \N_0$, let $f_{a,b} \colon\,[0,1] \rightarrow [0,1]$ be given by $f_{a,b}(c) = c^a(1-c)^b$, $c \in [0,1]$. It is easy to see that
$\sup_{c \in [0,1]} |f'_{a,b}(c)| \leq a + b$. Hence, using the mean value theorem with $a=w(F_m)$ and $b = b(F_m)$ in the above, we have
\bes{
  \EE\big[\babs{ (\bar{c}^n_{\eps_n})^{w(F_m)}(1- \bar{c}^n_{\eps_n})^{b(F_m)} 
  - (\bar{c}^n_0)^{w(F_m)}(1- \bar{c}^n_0)^{b(F_m)}}\big]
  \leq {k_m}\EE\big[\abs{\bar{c}^n_{\eps_n} - \bar{c}^n_0}\big].
}
Using the Cauchy-Schwarz inequality, we have
\bes{
  \EE\big[\abs{\bar{c}^n_{\eps_n} - \bar{c}^n_0}\big] \leq \sqrt{\EE\big[(\bar{c}^n_{\eps_n} - \bar{c}^n_0)^2\big]}.
}
Hence,
\bes{
  \IE\bcls{(\-c^n_{\eps_n}-\-c^n_0)^2}
  = \frac{1}{n^2} \sum_{i,j=1}^n \IE\bcls{\bclr{c^n_{\eps_n}(i)-c^n_0(U^i)}\bclr{c^n_{\eps_n}(j)-c^n_0(U^j)}},
}
where $U^i$ and $U^j$ are independent uniform random variables on $[n]$. For $i\neq j$, 
\bes{
  \mel \IE\bcls{\babs{\bclr{c^n_{\eps_n}(i)-c^n_0(U^i)}\bclr{c^n_{\eps_n}(j)-c^n_0(U^j)}}} \\
  & \leq \IE\bcls{\babs{\bclr{c^n_{0}(Y^i_{\eps_n})-c^n_0(U^i)}\bclr{c^n_{0}(Y^j_{\eps_n})-c^n_0(U^j)}L_{i,j}}} 
  + \IE\cls{(1-L_{i,j})} \\
  & \leq \dtv\bclr{\law(Y^{i}_{\eps_n},Y^{j}_{\eps_n}),\law(U^{i},U^{j})} + \IP\clr{L_{i,j}=0} 
   \to 0 \qquad \text{as $n\to\infty$},
} 
uniformly in $i$ and $j$, by similar arguments as above. Using a trivial upper bound for $i=j$, we obtain
\be{
  \EE\big[\abs{\bar{c}^n_{\eps_n} - \bar{c}^n_0}\big] \to 0 \qquad \text{as $n\to\infty$},
}
and so the claim follows.
\end{proof}

We prove our last key ingredient in the proof of Lemma~\ref{lem10} where pathwise duality to random walks is used.

\begin{lemma}[Closeness to projection]
\label{lem15}
Suppose that $(\eps_n)_{n\in\N}$ is such that \eqref{41} holds. Then, for any $\delta>0$,
\besn{\label{57}
  \lim_{n\to\infty}\sup_{\substack{G\in \cG^n:\\ \noverlap(G)\geq \delta}}
  \EE\bcls{\dsub\bclr{G^n_{\eps_n},\cP(G^n_{\eps_n})} \given G^n_{0}=G} = 0.
}
Furthermore, assume that $\noverlap(G^n_0) \geq \nu_0>0$ for $n$ large enough and for some $\nu_0>0$. Then, for any $T>0$,
\besn{\label{58}
  \lim_{n\to\infty}\sup_{\eps_n\leq s\leq T} \EE\bcls{\dsub\bclr{G^n_{s},\cP(G^n_{s})}} = 0.
}
\end{lemma}

\begin{proof} 
We first prove \eqref{57}. Let $\delta >0$ be given. Denote by  $(\kappa^n,c^n)$ the coloured graphon process induced by $G^n$ and by $(\kappa^n,\bar{c}^n)$ the coloured graphon process induced by $\cP(G^n)$. Fix $G\in\cG_n$ with $v(G)\geq \delta$, and let $(\kappa^n_0,c^n_0)$ be the corresponding coloured graphon induced by~$G$. For $M \in \N$, we have
\ben{ \label{59}
\EE\big[d_\sub\bclr{(\kappa^n_{\eps_n},c^n_{\eps_n}),(\kappa^n_{\eps_n},\bar{c}^n_{\eps_n})}\big]
\leq \sum_{m=1}^M \frac{1}{2^m} \EE\bcls{| t_{F_m}(\kappa^n_{\eps_n},c^n_{\eps_n})-t_{F_m}(\kappa^n_{\eps_n},\bar{c}^n_{\eps_n})|} + \frac{1}{2^M}.
}
Fix $\eps>0$ arbitrary, and $M$ large enough so that $2^{-M} < \eps/2$. For $1 \leq m \leq M$, observe that
\besn{ \label{60}
    \left| t_{F_m}(\kappa^n_{\eps_n},c^n_{\eps_n}) -  t_{F_m}(\kappa^n_{\eps_n},\bar{c}^n_{\eps_n}) \right| & \leq \babs{t_{F_m}(\kappa^n_{\eps_n},c^n_{\eps_n}) -  t_{F_m}(\kappa^n_0,c^n_{\eps_n})}\\
    &+  \left| t_{F_m}(\kappa^n_0,c^n_{\eps_n}) -  t_{F_m}(\kappa^n_0,\bar{c}^n_{0}) \right| \\
    & +  \left| t_{F_m}(\kappa^n_0,\bar{c}^n_{0}) -  t_{F_m}(\kappa^n_0,\bar{c}^n_{\eps_n}) \right| \\
    &+  \babs{  t_{F_m}(\kappa^n_0,\bar{c}^n_{\eps_n}) -t_{F_m}(\kappa^n_{\eps_n},\bar{c}^n_{\eps_n})}. 
} 
Let $k_m$ be the number of vertices in $F_m$. Note that, for any edge, the probability of having changed between time$0$ and time $\eps_n$ is at most $1-\ee^{-\rho \-s\eps_n} \leq \rho \-s\eps_n$ where $\-s = \max\{s_{\mathrm{c},0},s_{\mathrm{d},0}, s_{\mathrm{c},1},s_{\mathrm{d},1}\}$. So we have
\begin{align}\label{61}
&\EE[\babs{t_{F_m}(\kappa^n_{\eps_n},c^n_{\eps_n}) -  t_{F_m}(\kappa^n_0,c^n_{\eps_n})}] +  \EE[ \babs{  t_{F_m}(\kappa^n_0,\bar{c}^n_{\eps_n}) -t_{F_m}(\kappa^n_{\eps_n},\bar{c}^n_{\eps_n})}]\} \nonumber \\ & \hspace{1in}\leq 2 \EE\bcls{| t_{\bar{F}_m}(\kappa^n_{\eps_n}) - t_{\bar{F}_m}(\kappa^n_0) |} \nonumber\\
&\hspace{1in} \leq 2 \binom{{k_m}}{2} \rho\-s\eps_n.
\end{align}
Using \eqref{60} along with \eqref{61} and Lemmas~\ref{lem13} and \ref{lem14}, we see that there is an $n$ large enough such that
\bes{ 
\EE\bcls{ \left| t_{F_m}(\kappa^n_0,c^n_{\eps_n}) -  t_{F_m}(\kappa^n_0,\bar{c}^n_{\eps_n}) \right|} < \frac{\eps}{2M}
}
for any $1 \leq m \leq M$. Consequently, using this and our choice of $M$ in \eqref{59}, we obtain that
\bes{
\EE\bcls{d_\sub\bclr{(\kappa^n_{\eps_n},c^n_{\eps_n}),(\kappa^n_{\eps_n},\bar{c}^n_{\eps_n})}} < \eps
}
for $n$ large enough. As $\eps>0$ was arbitrary, this implies \eqref{57}.
 
We next prove \eqref{58}. Note that, for $\nu_0>0$ and $s >0$,
\bes{
  \mel\EE\bcls{d_\sub\bclr{G^n_{s+\eps_n},\cP(G^n_{s+\eps_n})}} \\
  & = \IE\Big[\EE\big[d_\sub\bclr{G^n_{s+\eps_n},\cP(G^n_{s+\eps_n})} \given G^n_s\big] 
  \I[\noverlap(G^n_s)\geq \nu_0\Big] \\[0.2cm]
  &\qquad +  \IE\Big[\EE\big[d_\sub\bclr{G^n_{s+\eps_n},\cP(G^n_{s+\eps_n})}
  \given 
  G^n_s\big] \I[\noverlap(G^n_s)< \nu_0\Big]\\[0.2cm]
  &\leq \sup_{s\geq 0} \sup_{\substack{G\in \cG^n:\\ \noverlap(G)\geq \nu_0}}\EE\bcls{\dsub\bclr{G^n_{s+\eps_n},\cP(G^n_{s+\eps_n})}\given G^n_{s}=G} +\IP(\noverlap(G^n_s)< \nu_0).
}
Using Lemma~\ref{lem12} and \eqref{57} with $\delta=\nu_0$, we have that if $\nu(G^n_0) \geq \nu_0$ for $n$ large enough, then
\bes{
  \lim_{n\to\infty}\sup_{s\in[0,T]} \EE\big[d_\sub\bclr{(\kappa^n_{s+\eps_n},c^n_{s+\eps_n}),
  (\kappa^n_{s+\eps_n},\bar{c}^n_{s+\eps_n})}\big] 
  = 0
}
for any $T>0$, which implies \eqref{58}.
\end{proof}

\begin{proof}[Proof of Lemma~\ref{lem10}]
Combine Remark \ref{rem1}$(a)$ with \eqref{58} in Lemma~\ref{lem15}.
\end{proof}


\subsection{Proof of Theorem~\ref{thm2}}
\label{62}

Before we present the proof of Theorem~\ref{thm2}, we need one last essential ingredient. Lemma~\ref{lem16} below shows that the projected process converges weakly to the limiting process in the path topology of convergence in measure. The proof of this lemma uses the criteria given by Proposition~\ref{prop1}, and the stochastic differential equations given by Lemma~\ref{lem8} and Lemma~\ref{lem9}. The  the graph connectivity shown in Lemma~\ref{lem12},  closeness to the projection shown in Lemma~\ref{lem15}, and results in \cite[Chapter 8]{EK86} regarding weak convergence via approximating generators are used in an essential way.

\begin{lemma}[Convergence of projection] 
\label{lem16}
On $\~\cW_0$, the processes $\cP(G^n)_{t\geq 0}$ converge weakly to $(\kappa_t,q_t)_{t\geq 0}$ as $n\to\infty$ in the path topology of convergence in measure, where the initial condition $(\kappa_0,q_0)$ of the limiting process is given by $\kappa_0 = \lim_{n\to\infty} G^n_0$ and $q_0 = \lim_{n\to\infty} t_{\wv}(G^n_0) = \lim_{n\to\infty} \-c_0^n$.
\end{lemma}

\begin{proof} 
Since all finite-$n$ processes are c\`adl\`ag, and the limiting process is continuous and hence, c\`adl\`ag as well, we can apply $(c)$ of Proposition~\ref{prop1}. Let $\cF_k$ denote the set of all coloured graphs with at most $k$ vertices, let $d=\abs{\cF_k}$, and let $F_1, \dots, F_d$ be an enumeration of the coloured graphs in $\cF_k$. It is clear that it suffices to prove convergence of finite-dimensional distributions for the family of subgraph densities given by $\cF_k$ for all $k$. Fix $k\geq2$, and let 
\bes{
  \cU_k = \bclc{u \in [0,1]^{\cF_k}\colon  \text{$\exists (\kappa,c)\in\cW_0$ such that $u_F = t_{F}(\kappa,c)$  $\forall\, F\in\cF_k$} }.
}
The set $\cU_k$ is a compact subset of $[0,1]^{\cF_k}$, since it is a continuous image of the compact set~$\cW_0$. The process $(\kappa_t,q_t)_{t\geq 0}$ from Lemma~\ref{lem8} induces, via the subgraph densities $(t_{F_i})_{i\in\cF_k}$, a $d$-dimensional diffusion on $\cU_k$ satisfying the stochastic differential equations of Lemma~\ref{lem9}. Moreover, via Lemma~\ref{lem9}, the action of its generator on a smooth enough function $h$ is given by
\bes{
  (\cA h)(\bu) 
  &= \rho\sum_{i=1}^d \bbbclc{\sum_{\{r,s\} \in E(F_i)}\bbcls{s_{\mathrm{c},0}\bclr{u_{\wv}^2+u_{\bv}^2}
  + 2s_{\mathrm{d},0 }u_{\wv}u_{\bv}}\bclr{u_{F_i(r\not\sim s)} - u_{F_i}}\\
  & \kern10em - e(F_i)\bbcls{s_{\mathrm{c},1}\bclr{u_{\wv}^2+u_{\bv}^2}
  +2s_{\mathrm{d},1}u_{\wv}u_{\bv}} u_{F_i}}h_i(\bu)\\
  &\quad + \eta\, u_{\blackwhiteedge}\sum_{i=1}^d\bbclr{\bclr{w_i u_{\wv}^{-1}- b_iu_{\bv}^{-1}}^2 
  - \bclr{w_i u_{\wv}^{-1}+ b_iu_{\bv}^{-1}}} u_{F_i}h_i(\bu) \\
  &\quad + \eta\, u_{\blackwhiteedge}\sum_{i=1}^d\sum_{j=1}^d \bclr{w_iu_{\wv}^{-1}-b_iu_{\bv}^{-1}}
  \bclr{w_ju_{\wv}^{-1}-b_ju_{\bv}^{-1}} u_{F_i} u_{F_j} h_{i,j}(\bu).
}
Note that, for any coloured $F\in \cF_k$ and any $u\in \cU_k$, 
\ben{\label{63}
  u_{F} = u_{\wv}^{w(F)}u_{\bv}^{b(F)} \sum_{H\in \cC(\-F)} u_{H} \leq u_{\wv}^{w(F)}u_{\bv}^{b(F)}.
}
Hence, it is easy to see that, on the set $\cU_k$, the coefficients of the diffusion given by $\cA$ are bounded and Lipschitz. It is straightforward to extend the coefficients to the whole of $[0,1]^d$ so that they are Lipschitz on all of $[0,1]^d$. This can be done, for instance, by applying \eqref{63} to $u_{F_i}$ in order to extract extra factors $u_{\wv}$ and $u_{\bv}$ as required to cancel the reciprocals of these two respective quantities. We can in fact easily extend the coefficients to all of $\IR^d$ such that they remain bounded and Lipschitz, for instance, by replacing each $u_F$ by $0\vee(1\wedge u_F)$. From \cite[Theorem~1.6, Chapter~8]{EK86}, it follows that the closure of $\cA$ generates a strongly continuous contraction semigroup on $C([0,1]^d)$ with core given by all infinitely differentiable functions on $\IR^d$ with compact support. We are therefore in a position to apply \cite[Corollary~8.4, Chapter~4]{EK86}, to compare the action of $\cA_n$ to that of $\cA$ on subgraph densities, which are dense in $C([0,1]^d)$. Let $h$ be a smooth function on $\IR^d$ with compact support. By means of \cite[(8.12), Chapter~4]{EK86}, we only need to show that
\bes{
  \mel\IE\babs{\bclr{\cA_n (h\circ\bt\circ \cP)}(G^n_t) - (\cA h)\bclr{\bt(\cP(G^n_t))}} \\
  &= \IE\babs{\rho\,\bclr{s_{\mathrm{c},0}{\Delta_{\mathrm{c},0,h}}(G^n_t) + s_{\mathrm{d},0}{\Delta_{\mathrm{d},0,h}}(G^n_t) + s_{\mathrm{c},1}{\Delta_{\mathrm{c},1,h}}(G^n_t) + s_{\mathrm{d},1}{\Delta_{\mathrm{d},1,h}}(G^n_t)} \\
  &\qquad\qquad\qquad\qquad\qquad
  + \eta\,\bclr{\Delta_{\wv,\wv}(G^n_t) + \Delta_{\wv,\bv}(G^n_t) + \Delta_{\bv,\bv}}(G^n_t)}
 \to 0, \qquad 
}
as $n\to\infty$ for all $t\geq 0$. Note, for instance, that
\be{
\abs{\Delta_{\mathrm{c},0,h}(G^n_t)}
\leq \abs{h}_1\sum_{i=1}^d\sum_{\{r,s\}\in E(\bar{F}_i)}
\sum_{H\in \cC(\bar{F}_i)}\bclr{\abs{\Delta_{H(r\not\sim s)}(G^n_t)}+\abs{\Delta_{H}(G^n_t)}}.
}
Since all sums are finite, we only need to estimate quantities of the form $\IE\abs{\Delta_H(G^n_t)}$. However, \eqref{58} of Lemma~\ref{lem15} immediately yields that 
\be{
  \IE\abs{\Delta_H(G^n_t)}\to 0 \qquad\text{as $n\to\infty$,}
}
uniformly over $t\in[\delta,T]$ for each $0<\delta<T$. This implies convergence of the finite-dimensional distributions for all $t>0$, which is enough for convergence in the path topology of convergence in measure by $(c)$ of Proposition~\ref{prop1}.
\end{proof}

\begin{proof}[Proof of Theorem~\ref{thm2}] We use Remark~\ref{rem1} with $X^n=G^n$, $Y^n=\cP(G^n)$ its projection onto $\cW_0$, and $Z=(\kappa_t,c_t)_{t\geq 0}$ the limiting process. Lemma~\ref{lem10} implies that $\displaystyle \lim_{n \rightarrow \infty}\IE[\dMZ(X^n,Y^n)] = 0$, while Lemma~\ref{lem16} implies that $Y^n$ converges weakly to~$Z$. The convergence of the finite dimensional distributions follows from the argument at the end of Lemma~\ref{lem16}, \eqref{58} from Lemma~\ref{lem15}, and Slutsky's Theorem.
\end{proof}


\appendix



\section{Sketch of the derivation of the evolution of subgraph densities}
\label{sec17}

In this appendix we illustrate the types of calculations that are needed to derive the evolution of subgraph densities in \eqref{24} from \eqref{29}. Detailed notes can be obtained from the authors upon request. We only consider the drift term arising from vertex dynamics, which is the simplest object. We may restrict ourselves to the case $d=1$. Let $F$ be a coloured graph with $k$ vertices. Recalling the notation \eqref{25}--\eqref{28}, we have
\bes{
  \sum_{1\leq u\leq n}r^{\mathrm{v}}_u(G) \D_{u}t^{\inj}_{F}(G) 
  & = \frac{1}{(n)_k}\sum_{u}\sum_{v:v\neq u}\^c_u e_{uv}c_v\sump_{u_1,\dots,u_k} \chi^{F,1}_{u_1,\dots,u_k}(u)\\
  &\qquad + \frac{1}{(n)_k}\sum_{u}\sum_{v:v\neq u}c_u  e_{uv} \^c_v\sump_{u_1,\dots,u_k} \chi^{F,1}_{u_1,\dots,u_k}(u)\\
  & = R_{1}+R_{2}.
}
Observe that
{\small\ba{
  R_{1} &=\frac{1}{(n)_k}\sum_{u}\sum_{v:v\neq u}\^c_u e_{uv}c_v\sump_{u_1,\dots,u_k} \begin{cases}
       & \text{if $e_{ij}=1\hence  e_{u_iu_j}=1$ $\forall 1\leq i<j\leq k$,} \\
   +1  & \text{if  $\exists i$ s.t.\  $u_i=u$ with $c_{u_i}=1-c_i$, and}\\
       & \text{if $c_i=c_{u_i}$ $\forall 1\leq i\leq k$ s.t.\  $u_i\neq u$;}\\[2ex]
       & \text{if $e_{ij}=1\hence  e_{u_iu_j}=1$ $\forall 1\leq i<j\leq k$,} \\
    -1   & \text{if $\exists i$ s.t.\  $u_i=u$, and} \\
       & \text{if $c_i=c_{u_i} $ $\forall 1\leq i\leq k$;}\\[2ex]
     0 & \text{else.}
   \end{cases}\\
  &=\frac{1}{(n)_k}\sum_{u}\sum_{v:v\neq u}\^c_ue_{uv}c_v\sump_{u_1,\dots,u_k} \begin{cases}
       & \text{if $e_{ij}=1\hence  e_{u_iu_j}=1$ $\forall 1\leq i<j\leq k$,} \\
    + 1& \text{if $\exists i$ s.t.\  $u_i=u$ and $c_i=1$, and}\\
       & \text{if $c_i=c_{u_i}$ $\forall 1\leq i\leq k$ s.t.\  $u_i\neq u$;}\\[2ex]
       & \text{if $e_{ij}=1\hence  e_{u_iu_j}=1$ $\forall 1\leq i<j\leq k$,} \\
    -1   & \text{if  $\exists i$ s.t.\  $u_i=u$ and $c_i=0$, and} \\
       & \text{if $c_i=c_{u_i}$ $\forall 1\leq i\leq k$ s.t.\  $u_i\neq u$;}\\[2ex]
     0 & \text{else.}
   \end{cases}\\
  &=\frac{1}{(n)_k}\sum_{u}\sum_{v:v\neq u}\^c_ue_{uv}c_v\sump_{u_1,\dots,u_k} \begin{cases}
      & \text{if $e_{ij}=1\hence  e_{u_iu_j}=1$ $\forall 1\leq i<j\leq k$,} \\
     1  & \text{if $\exists i$ s.t.\  $u_i=u$ and $c_i=1$, and}\\
       & \text{if $c_i=c_{u_i}$ $\forall 1\leq i\leq k$ s.t.\  $u_i\neq u$;}\\[2ex]
     0 & \text{else.}
   \end{cases}\\[2ex]
  & \quad - \frac{1}{(n)_k}\sum_{u}\sum_{v:v\neq u}\^c_ue_{uv}c_v\sump_{u_1,\dots,u_k} \begin{cases}
        & \text{if $e_{ij}=1\hence  e_{u_iu_j}=1$ $\forall 1\leq i<j\leq k$,} \\
       1  & \text{if $\exists i$ s.t.\  $u_i=u$ and $c_i=0$, and} \\
         & \text{if $c_i=c_{u_i}$ $\forall 1\leq i\leq k$ s.t.\  $u_i\neq u$;}\\[2ex]
       0 & \text{else.}
     \end{cases}\\
     &= R_{11}-R_{12}.
}}
Since $e_{uu}=0$ for all $u$, we may replace $\sum_{v:v\neq u}$ by $\sum_{v}$. Thus,
{\small\ba{
R_{11}&=\frac{1}{(n)_k}\sum_{u}\sum_{v}\^c_ue_{uv}c_v\sump_{u_1,\dots,u_k} \begin{cases}
      & \text{if $e^F_{ij}=1\hence  e^F_{u_iu_j}=1$ $\forall 1\leq i<j\leq k$,} \\
     1  & \text{if $\exists i$ s.t.\  $u_i=u$ and $c_i=1$, and}\\
       & \text{if $c_i=c_{u_i}$ $\forall 1\leq i\leq k$ s.t.\  $u_i\neq u$;}\\[2ex]
     0 & \text{else.}
   \end{cases}\\
& =\frac{1}{(n)_k}\sump_{u_1,\dots,u_k}\sum_{u}\sum_{v}\^c_ue_{uv}c_v \begin{cases}
      & \text{if $e^F_{ij}=1\hence  e^F_{u_iu_j}=1$ $\forall 1\leq i<j\leq k$,} \\
     1  & \text{if $\exists i$ s.t.\  $u_i=u$ and $c_i=1$, and}\\
       & \text{if $c_i=c_{u_i}$ $\forall 1\leq i\leq k$ s.t.\  $u_i\neq u$;}\\[2ex]
     0 & \text{else.}
   \end{cases}\\
&= \sum_{l=1}^k\frac{1}{(n)_k}\sump_{u_1,\dots,u_k}\sum_{v}\^c_{u_l}e_{u_lv}c_v \begin{cases}
      & \text{if $e^F_{ij}=1\hence  e^F_{u_iu_j}=1$ $\forall 1\leq i<j\leq k$,} \\
     1  & \text{if $c^F_l=1$, and}\\
       & \text{if $c^F_i=c_{u_i}$ $\forall 1\leq i\leq k$ s.t.\  $u_i\neq u_l$;}\\[2ex]
     0 & \text{else.}
   \end{cases}\\
&= \sum_{p:c_p^F=1}\frac{1}{(n)_k}\sump_{u_1,\dots,u_k}\sum_{v}e_{u_pv}c_v \begin{cases}
      & \text{if $e^F_{ij}=1\hence  e^F_{u_iu_j}=1$ $\forall 1\leq i<j\leq k$,} \\
     1  & \text{if $c_{u_p}=0$, and} \\
       & \text{if $c^F_i=c_{u_i}$ $\forall 1\leq i\leq k$ s.t.\  $u_i\neq u_p$;}\\[2ex]
     0 & \text{else.}
   \end{cases}\\
&= \sum_{p:c_p^F=1}\frac{1}{(n)_k}\sump_{u_1,\dots,u_{k+1}}\begin{cases}
      & \text{if $e^F_{ij}=1\hence  e^F_{u_iu_j}=1$ $\forall 1\leq i<j\leq k$,} \\
     1  & \text{if $e_{u_pu_{k+1}}=1$, if $c_{u_p}=0$, if $c_{u_{k+1}}=1$, and}\\
       & \text{if $c_{u_i}=c^F_i$ $\forall 1\leq i\leq k$ s.t.\  $u_i\neq u_p$;}\\[2ex]
     0 & \text{else.}
   \end{cases}\\
&\quad+ \sum_{p:c_p^F=1}\sum_{q:q\neq p}\frac{1}{(n)_k}\sump_{u_1,\dots,u_k} \begin{cases}
       & \text{if $e^F_{ij}=1\hence  e^F_{u_iu_j}=1$ $\forall 1\leq i<j\leq k$,} \\
     1  & \text{if $e_{u_pu_q}=1$, if $c_{u_p}=0$, if $c_{u_q}=1$, and}\\
       & \text{if $c_{u_i}=c^F_i$ $\forall 1\leq i\leq k$ s.t.\  $u_i\neq u_p$;}\\[2ex]
     0 & \text{else.}
   \end{cases}\\
&= \sum_{\substack{p\in V(F):\\c_p^F=1}}(n-k)t^{\inj}\bclr{F(c_p\gets0,+ (k+1),c_{k+1}\gets1,p\sim k+1);G}\\
&\kern10em+ \sum_{\substack{p\in V(F):\\c_p^F=1}}\sum_{\substack{q\in V(F):\\q\neq p, c_q^F=1}}t^{\inj}\bclr{F(c_p\gets0,p\sim q);G}.
}}
Similarly,
{\small\bes{
R_{12}&=\sum_{\substack{p\in V(F):\\c_p^F=0}}(n-k)t^{\inj}\bclr{F(+ k+1,c_{k+1}\gets1,p\sim k+1);G}\\
&\kern10em+  \sum_{\substack{p\in V(F):\\c_p^F=0}}\sum_{\substack{q\in V(F):\\q\neq p, c_q^F=1}}t^{\inj}\bclr{F(p\sim q);G}.
}}
Collecting the terms, we arrive at
{\small\bes{
  R_{1} 
  & = (n-k)\sum_{\substack{p\in V(F):\\c_p^F=1}}t^{\inj}\bclr{F(c_p\gets0,+ (k+1),c_{k+1}\gets1,p\sim k+1);G}\\
  & \kern8em+ \sum_{\substack{p\in V(F):\\c_p^F=1}}\sum_{\substack{q\in V(F):\\q\neq p, c_q^F=1}}t^{\inj}\bclr{F(c_p\gets0,p\sim q);G} \\
  & \kern8em - (n-k)\sum_{\substack{p\in V(F):\\c_p^F=0}}t^{\inj}\bclr{F(+(k+1),c_{k+1}\gets1,p\sim k+1);G}\\
  & \kern8em - \sum_{\substack{p\in V(F):\\c_p^F=0}}\sum_{\substack{q\in V(F):\\q\neq p, c_q^F=1}}t^{\inj}\bclr{F(p\sim q);G}.
}}
In the same manner, we have
{\small\bes{
  R_{2}  
  & = (n-k)\sum_{\substack{p\in V(F):\\c_p^F=0}}t^{\inj}\bclr{F(c_p\gets1, +(k+1),c_{k+1}\gets0,p\sim k+1);G}\\
  &\kern8em+ \sum_{\substack{p\in V(F):\\c_p^F=0}}\sum_{\substack{q\in V(F):\\q\neq p, c_q^F=0}}t^{\inj}\bclr{F(c_p\gets1,p\sim q);G} \\
  & \kern8em - (n-k)\sum_{\substack{p\in V(F):\\c_p^F=1}}t^{\inj}\bclr{F(+(k+1),c_{k+1}\gets0,p\sim k+1);G}\\
  &\kern8em- \sum_{\substack{p\in V(F):\\c_p^F=1}}\sum_{\substack{q\in V(F):\\q\neq p, c_q^F=0}}t^{\inj}\bclr{F(p\sim q);G}.
}}
Combining the above results, we obtain
{\small\bes{
  R_1 & = R_{11}+R_{12} \\
  & = (n-k)\bbbbclr{\,\sum_{\substack{p\in V(F)}}t^{\inj}\bclr{F(+(k+1),c_{k+1}\gets c_p, p\sim k+1, c_p\gets1-c_p);G}\\
  & \qquad\qquad\qquad - \sum_{\substack{p\in V(F)}}t^{\inj}\bclr{F(+(k+1),c_{k+1}\gets1-c_p,p\sim k+1);G}}\\
  & \quad + \sum_{\substack{p\in V(F)}}\sum_{\substack{q\in V(F):\\q\neq p, c_q^F=c_p^F}}t^{\inj}\bclr{F(c_p\gets1-c_p,p\sim q);G} \\
  & \qquad\qquad\qquad - \sum_{\substack{p\in V(F)}}\sum_{\substack{q\in V(F):\\q\neq p, c_q^F\neq c_p^F}}t^{\inj}\bclr{F(p\sim q);G}.
}}


\section*{Acknowledgements} 
SA was supported through a Knowledge Exchange Grant, under project no.\ RTI4001 of the Department of Atomic Energy, Government of India. FdH was supported through NWO Gravitation Grant NETWORKS-024.002.003. AR was supported through Singapore Ministry of Education Academic Research Fund Tier 2 grant MOE2018-T2-2-076. The authors thank the International Centre for Theoretical Sciences (ICTS) for hospitality during the ICTS-NETWORKS workshop ``Challenges in Networks'' in January 2024, and during a research visit in September 2024. AR thanks Rongfeng Sun for helpful discussions.



\Addresses


\begin{thebibliography}{99}

\bibitem[Aldous(1989)]{Aldous89}
\textsc{Aldous, D.} (1989).
Stopping times and tightness II. \textit{Ann. Probab.} \textbf{17}, 586--595.

\bibitem[Athreya et al.(2021)Athreya, den Hollander, and R\"ollin]{AdHR21} 
\textsc{Athreya, S.}, \textsc{den Hollander, F.}, and \textsc{R\"ollin, A.} (2021).
Graphon-valued stochastic processes from population genetics.
\textit{Ann. Appl. Probab.} \textbf{31}, 1724--1745.

\bibitem[Athreya et al.(2025)Athreya, den Hollander, and R\"ollin]{AdHR24} 
\textsc{Athreya, S.}, \textsc{den Hollander, F.}, and \textsc{R\"ollin, A.} (2025).
The Moran model with random resampling rates.
\textit{Ann. Appl. Probab.} \textbf{35}, 1852--1868.

\bibitem[Athreya and R\"ollin(2016)]{AR16}
\textsc{Athreya, S.} and \textsc{R\"ollin, A.} (2016).
Dense graph limits under respondent-driven sampling.
\emph{Ann. Appl. Probab.} \textbf{26}, 2193--2210.

\bibitem[Avena et al.(2024)Avena, Baldasso, Hazra, den Hollander, and Quattropani]{ABHdHQ24} 
\textsc{Avena, L.}, \textsc{Baldasso, R.}, \textsc{Hazra, R.S.}, \textsc{den Hollander, F.}, and \textsc{Quattropani, M.} (2024).
Discordant edges for the voter model on regular random graphs.
\textit{ALEA Lat. Am. J. Probab. Math. Stat.} \textbf{21}, 431--464.

\bibitem[Avena et al.(2025)Avena, Baldasso, Hazra, den Hollander, and Quattropani]{ABHdHQ24pr} 
\textsc{Avena, L.}, \textsc{Baldasso, R.}, \textsc{Hazra, R.S.}, \textsc{den Hollander, F.}, and \textsc{Quattropani, M.} (2025).
The voter model on random regular graphs with random rewiring.
Preprint at arXiv:2501.08703.

\bibitem[Avena et al.(2024)Avena, Capannoli, Hazra, and Quattropani]{ACHQ23} 
\textsc{Avena, L.}, \textsc{Capannoli, F.}, \textsc{Hazra, R.S.}, and \textsc{Quattropani, M.} (2024).
Meeting, coalescence and consensus time on random directed graphs.
\textit{Ann. Appl. Probab.} \textbf{34}, 4940--4997. 

\bibitem[Baldassarri et al.(2024)Baldassarri, Braunsteins, den Hollander, and Mandjes]{BdHM24pr}
\textsc{Baldassarri, S.}, \textsc{Braunsteins, P.}, \textsc{den Hollander, F.}, and \textsc{Mandjes, M.} (2024).
Opinion dynamics on dense dynamic random graphs.
Preprint at arXiv:2410.14618.

\bibitem[Baldassarri and Kriukov(2025)]{BK25}
\textsc{Baldassarri, S.} and \textsc{Kriukov, N.} (2025).
Functional central limit theorem for the subgraph count of the voter model on dynamics random graphs.
Preprint at arXiv:2503.11541.

\bibitem[Basu and Sly(2017)]{BS17} 
\textsc{Basu, R.} and \textsc{Sly, A.} (2017).
Evolving voter model on dense random graphs.
\textit{Ann. Appl. Probab.} \textbf{27}, 1235--1288.

\bibitem[Bayraktar et al.(2023)Bayraktar, Chakraborty, and Wu]{bcw23}
\textsc{Bayraktar, E.}, \textsc{Chakraborty, S.}, and \textsc{Wu, R.} (2023).
Graphon mean field systems.
\textit{Ann. Appl. Probab.} \textbf{33}, 3587--3619.

\bibitem[Bogachev and Miftakhov(2016)]{BM16}
\textsc{Bogachev, V.I.} and \textsc{Miftakhov, A.F.} (2016).
On weak convergence of finite-dimensional and infinite-dimensional distributions of random processes.
\textit{Theory Stoch. Process.} \textbf{21}, 1--11.

\bibitem[Borges et al.(2024)Borges, Vasconcelos, and Pinheiro]{BVP24}
\textsc{Borges, H.M.}, \textsc{Vasconcelos, V.V.}, and \textsc{Pinheiro, F.L.} (2024).
How social rewiring preferences bridge polarized communities.
\textit{Chaos Solit. Fractals} \textbf{180}, 114594.

\bibitem[Borgs et al.(2008)Borgs, Chayes, Lov{\'a}sz, S{\'o}s, and Vesztergombi]{BCLSV08}
\textsc{Borgs, C.}, \textsc{Chayes, J.}, \textsc{Lov{\'a}sz, L.}, \textsc{S{\'o}s, V.}, and \textsc{Vesztergombi, K.} (2008).
Convergent sequences of dense graphs I: Subgraph frequencies, metric properties and testing.
\textit{Adv. Math.} \textbf{219}, 1801--1851.

\bibitem[Braunsteins et al.(2026)Braunsteins, den Hollander, and Mandjes]{BdHM22} 
\textsc{Braunsteins, P.}, \textsc{den Hollander, F.}, and \textsc{Mandjes, M.} (2026).
Graphon-valued processes with vertex-level fluctuations.
\textit{Stoch. Proc. Appl.} https://doi.org/10.1016/j.spa.2026.104961

\bibitem[Capannoli(2024)]{C24}
\textsc{Capannoli, F.} (2024)
Evolution of discordant edges in the voter model on random sparse digraphs.
\textit{Electron. J. Probab.} \textbf{30}, 1--24.

\bibitem[Chen et al.(2016)Chen, Choi, and Cox]{CCC16} 
\textsc{Chen, Y.-T.}, \textsc{Choi, J.}, and \textsc{Cox, J.T.} (2016).
On the convergence of densities of finite voter models to the Wright-Fisher diffusion.
\textit{Ann. Inst. Henri Poincar\'e Probab. Stat.} \textbf{52}, 286--322.

\bibitem[\v{C}ern\'y and Klimovsky(2018)]{CK18} 
\textsc{\v{C}ern\'y, J.} and \textsc{Klimovsky, A.} (2018).
Markovian dynamics of exchangeable arrays.
In \textit{Genealogies of Interacting Particle Systems}, volume 38 of \textit{Lecture Notes Series, Institute for Mathematical Sciences, National University of Singapore}.
World Scientific, Singapore.

\bibitem[Cremers and Kadelka(1986)]{CK86}
\textsc{Cremers, H.} and \textsc{Kadelka, D.} (1986).
On weak convergence of integral functionals of stochastic processes with applications to processes taking paths in $L^E_p$.
\textit{Stochastic Process. Appl.} \textbf{21}, 305--317.

\bibitem[Diao et al.(2015)Diao, Guillot, Khare, and Rajaratnam]{DGKR15}
\textsc{Diao, P.}, \textsc{Guillot, D.}, \textsc{Khare, A.}, and \textsc{Rajaratnam, B.} (2015).
Differential calculus on graphon space.
\textit{J. Combin. Theory Ser. A} \textbf{133}, 183--227.

\bibitem[Durrett et al.(2012)]{DGLMSSSV12} 
\textsc{Durrett, R.}, \textsc{Gleeson, J.P.}, \textsc{Lloyd, A.L.}, \textsc{Mucha, P.J.}, \textsc{Shi, F.}, \textsc{Sivakoff, D.}, \textsc{Socolar, J.E.S.}, and \textsc{Varghese, C.} (2012).
Graph fission in an evolving voter model.
\textit{Proc. Natl. Acad. Sci. USA} \textbf{109}, 3682--3687.

\bibitem[Eichhorn et al.(2025)Eichhorn, Hermann, and Seiler]{EHS25}
\textsc{Eichhorn, R.}, \textsc{Hermann, F.} and \textsc{Seiler, M.} (2025).
The Offended Voter Model.
Preprint at arXiv:2502.18619. 

\bibitem[Ethier and Kurtz(1986)]{EK86}
\textsc{Ethier, S.N.} and \textsc{Kurtz, T.G.} (1986).
\textit{Markov Processes: Characterization and Convergence}.
Wiley Series in Probability and Mathematical Statistics.
John Wiley \& Sons, New York.

\bibitem[Gil and Zanette(2006)]{GZ06} 
\textsc{Gil, S.} and \textsc{Zanette, D.H.} (2006).
Coevolution of agents and networks: Opinion spreading and community disconnection.
\textit{Phys. Lett. A} \textbf{356}, 89--94.

\bibitem[Grinblat(1976)]{G76}
\textsc{Grinblat, L.Š.} (1976).
A limit theorem for measurable random processes and its applications.
\textit{Proc. Amer. Math. Soc.} \textbf{61}, 371--376.

\bibitem[Henry et al.(2011)Henry, Pralat, and Zhang]{HPZ11} 
\textsc{Henry, A.D.}, \textsc{Pralat, P.}, and \textsc{Zhang, C.Q.} (2011).
Emergence of segregation in evolving social networks.
\textit{Proc. Natl. Acad. Sci. USA} \textbf{108}, 8605--8610.

\bibitem[Holley and Liggett(1975)]{HL75}
\textsc{Holley, R.A.} and \textsc{Liggett, T.M.} (1975).
Ergodic theorems for weakly interacting infinite systems and the voter model.
\textit{Ann. Probab.} \textbf{3}, 643--663.

\bibitem[Holme and Newman(2006)]{HN06} 
\textsc{Holme, P.} and \textsc{Newman, M.} (2006).
Nonequilibrium phase transition in the coevolution of networks and opinions.
\textit{Phys. Rev. E} \textbf{74}, 056108.

\bibitem[Jansen and Kurt(2014)]{JK14}
\textsc{Jansen, S.} and \textsc{Kurt, N.} (2014).
On the notion(s) of duality for Markov processes.
\textit{Probab. Surv.} \textbf{11}, 59--120.

\bibitem[Kozma and Barrat(2008a)]{KB08a} 
\textsc{Kozma, B.} and \textsc{Barrat, A.} (2008a).
Consensus formation on adaptive networks.
\textit{Phys. Rev. E} \textbf{77}, 016102.

\bibitem[Kurtz(1991)]{K91}
\textsc{Kurtz, T.G.} (1991).
Random time changes and convergence in distribution under the Meyer-Zheng conditions.
\textit{Ann. Probab.} \textbf{19}, 1010--1034.

\bibitem[Liggett(1985)]{L85}
\textsc{Liggett, T.} (1985).
\textit{Interacting particle systems}.
Springer-Verlag-1985

\bibitem[Liu et al.(2023)Liu, Huang, Aden, Johnson, and Song]{LHAJS20} 
\textsc{Liu, J.}, \textsc{Huang, S.}, \textsc{Aden, N.M.}, \textsc{Johnson, N.F.}, and \textsc{Song, C.} (2023).
Emergence of polarization in coevolving networks.
\textit{Phys. Rev. Lett.} \textbf{130}, 037401.

\bibitem[Lov{\'a}sz(2012)]{L12}
\textsc{Lov{\'a}sz, L.} (2012).
\textit{Large Networks and Graph Limits}.
American Mathematical Society.

\bibitem[Lov{\'a}sz and Szegedy(2006)]{LS06}
\textsc{Lov{\'a}sz, L.} and \textsc{Szegedy, B.} (2006).
Limits of dense graph sequences.
\textit{J. Combin. Theory Ser. B} \textbf{96}, 933--957.

\bibitem[Meyer and Zheng(1984)]{MZ84}
\textsc{Meyer, P.A.}, \textsc{Zheng, W.A.}.
Tightness criteria for laws of semimartingales
\textsc{Ann. Inst. H. Poincaré Probab. Statist.} \textbf{20}, 353--372 

\bibitem[Pacheco et al.(2006)Pacheco, Traulsen, and Nowak]{PTN06} 
\textsc{Pacheco, J.M.}, \textsc{Traulsen, A.}, and \textsc{Nowak, M.A.} (2006).
Coevolution of strategy and structure in complex networks with dynamical linking.
\textit{Phys. Rev. Lett.} \textbf{97}, 258103.

\bibitem[Protter(2005)]{P05}
\textsc{Protter, P.E.} (2005).
\textit{Stochastic Integration and Differential Equations}.
Springer, Berlin.

\bibitem[Raducha and Miguel(2020)]{RM20} 
\textsc{Raducha, T.} and \textsc{Miguel, M.S.} (2020).
Emergence of complex structures from nonlinear interactions and noise in coevolving networks.
\textit{Sci. Rep.} \textbf{10}, 15660.

\bibitem[Rogers and Pitman(1981)]{RP81}
\textsc{Rogers, L.C.G.} and \textsc{Pitman, J.W.} (1981).
Markov functions.
\textit{Ann. Probab.} \textbf{9}, 573--582.

\bibitem[Skorohod (1965)]{S65}
\textsc{Skorohod, A.V.} (1965).
Studies in the Theory of Random Processes.
\textit{Addison-Wisley} 1965 (originally published in Kiev).

\bibitem[Yamada and Watanabe(1971)]{YW71}
\textsc{Yamada, T.} and \textsc{Watanabe, S.} (1971).
On the uniqueness of solutions of stochastic differential equations.
\textit{J. Math. Kyoto Univ.} \textbf{11}, 155--167.


\end{thebibliography}
\end{document}